\theoremstyle{definition}
\newtheorem*{problem*}{\protect\problemname}
\theoremstyle{plain}
\newtheorem{theorem}{\protect\theoremname}[section]
\theoremstyle{definition}
\newtheorem{definition}[theorem]{\protect\definitionname}
\theoremstyle{plain}
\newtheorem{lemma}[theorem]{\protect\lemmaname}
\theoremstyle{plain}
\newtheorem{corollary}[theorem]{\protect\corollaryname}
\theoremstyle{plain}
\newtheorem{proposition}[theorem]{\protect\propositionname}
\theoremstyle{plain}
\newtheorem{assumption}[theorem]{\protect\assumptionname}
\theoremstyle{definition}
\newtheorem{example}[theorem]{\protect\examplename}
\theoremstyle{remark}
\theoremstyle{plain}
\newtheorem*{prop*}{\protect\propositionname}
\definecolor{darkmagenta}{RGB}{139,0,139}
\definecolor{refkey}{rgb}{0.9451,0.2706,0.4941}
\definecolor{labelkey}{rgb}{0.9451,0.2706,0.4941}
\numberwithin{equation}{section}
\def\renewtheorem#1{%
  \expandafter\let\csname#1\endcsname\relax
  \expandafter\let\csname c@#1\endcsname\relax
  \gdef\renewtheorem@envname{#1}
  \renewtheorem@secpar
}
\def\renewtheorem@secpar{\@ifnextchar[{\renewtheorem@numberedlike}{\renewtheorem@nonumberedlike}}
\def\renewtheorem@numberedlike[#1]#2{\newtheorem{\renewtheorem@envname}[#1]{#2}}
\def\renewtheorem@nonumberedlike#1{  
\def\renewtheorem@caption{#1}
\edef\renewtheorem@nowithin{\noexpand\newtheorem{\renewtheorem@envname}{\renewtheorem@caption}}
\renewtheorem@thirdpar
}
\def\renewtheorem@thirdpar{\@ifnextchar[{\renewtheorem@within}{\renewtheorem@nowithin}}
\def\renewtheorem@within[#1]{\renewtheorem@nowithin[#1]}
\theoremstyle{definition}
\providecommand{\assumptionname}{Assumption}
\providecommand{\corollaryname}{Corollary}
\providecommand{\definitionname}{Definition}
\providecommand{\examplename}{Example}
\providecommand{\lemmaname}{Lemma}
\providecommand{\problemname}{Problem}
\providecommand{\propositionname}{Proposition}
\providecommand{\remarkname}{Remark}
\providecommand{\theoremname}{Theorem}
\title{Existence of dynamical low rank approximations for random semi-linear evolutionary equations on the maximal interval}
\author{Yoshihito Kazashi\footnotemark[1]\and Fabio Nobile\footnote{Mathematics Institute, CSQI, \'Ecole Polytechnique F\'ed\'erale de Lausanne, Switzerland}}
\begin{document}
\maketitle
\begin{abstract}
An existence result is presented for the dynamical low rank (DLR) approximation
for random semi-linear evolutionary equations. 
The DLR solution approximates the true solution at each time instant by a linear combination of products of deterministic and stochastic basis functions, both of which evolve over time. 
A key to our proof
is to find a suitable equivalent formulation of the original problem.
The so-called Dual Dynamically Orthogonal formulation turns out to
be convenient. 
Based on this formulation, the DLR approximation is recast to an abstract Cauchy problem in a suitable linear space, for which existence and uniqueness of the solution in the maximal interval are established.
\end{abstract}

\section{Introduction}
\begin{sloppypar}
	This paper is concerned with the existence of solutions of the
	so called Dynamical Low Rank Method \cite{Sapsis.T_Lermusiaux_2009_DO,Musharbash.E_etal_2015_SISC,Musharbash.E_Nobile_2018_Dual,Feppon.F_Lermusiaux_2018_geometric,Feppon.F_Lermusiaux_2018_SIREV}
	to a semi-linear random parabolic evolutionary equation. For a separable
	$\mathbb{R}$-Hilbert space $(\mathcal{H},\langle\cdot,\cdot\rangle)$
	and a probability space $(\Omega,\mathscr{F},\mathbb{P})$, let $L^{2}(\Omega;\mathcal{H}):=L_{\mathbb{P}}^{2}(\Omega;\mathcal{H})$
	be the Bochner space of equivalence classes of $\mathcal{H}$-valued
	measurable functions on $\Omega$, with finite second moments. We
	consider the following equation in $L^{2}(\Omega;\mathcal{H})$:
	\begin{equation}
	\frac{\partial u}{\partial t}(t)=\Lambda u(t)+F(u(t)),\quad t>0,\quad\text{with}\ u(0)=u_{0},\label{eq:exact-eq}
	\end{equation}
	with a closed linear operator $\Lambda:D_{\mathcal{H}}(\Lambda)\subset\mathcal{H}\to\mathcal{H}$,
	and a mapping $F\colon L^{2}(\Omega;\mathcal{H})\to L^{2}(\Omega;\mathcal{H})$,
	where the domain $D_{\mathcal{H}}(\Lambda)$ is dense in $\mathcal{H}$.
\end{sloppypar}
Our interest in this paper is a reduced basis method for this problem
called the Dynamically Low Rank (DLR) approximation \cite{Sapsis.T_Lermusiaux_2009_DO,Musharbash.E_etal_2015_SISC,Musharbash.E_Nobile_2018_Dual,Feppon.F_Lermusiaux_2018_geometric,Feppon.F_Lermusiaux_2018_SIREV}.
The idea is to approximate the solution of \eqref{eq:exact-eq} at each time $t>0$ as
a linear combination of products of deterministic and
stochastic basis functions, both of which evolve over time: the
approximate solution is of the form $u_{S}(t)=\boldsymbol{U}^{\top}(t)\boldsymbol{Y}(t)$,
for some positive integer $S\in\mathbb{N}$ called the rank of
the solution, where $\boldsymbol{U}(t)=(U_{1}(t),\dots,U_{S}(t))^{\top}$
are linearly independent in $\mathcal{H}$, and $\boldsymbol{Y}(t)=(Y_{1}(t),\dots,Y_{S}(t))^{\top}$
are linearly independent in the space $L^{2}(\Omega)$ of square-integrable
random variables. We note that both bases depend on the temporal variable
$t$. This dependence is intended to approximate well, with a fixed
(possibly small) rank, the solution of stochastic dynamical
systems such as \eqref{eq:exact-eq}, whose stochastic and spatial
dependence may change significantly in time. Numerical examples and
error analysis suggests the method does indeed work well in
a certain number of practical applications \cite{Sapsis.T_Lermusiaux_2009_DO,Musharbash.E_Nobile_2018_Dual}.

A fundamental open question regarding this approach is the
unique existence of DLR solutions. The DLR approximation is
given as a solution of a system of differential equations, and available
approximation results are built upon the assumption that this
solution exists, e.g.\ \cite{Musharbash.E_etal_2015_SISC,Feppon.F_Lermusiaux_2018_geometric}.
Nonetheless, to the best of our knowledge, the existence---let alone
the uniqueness---of DLR solutions {for an equation of the type \eqref{eq:exact-eq}} is not known. In this paper,
we will establish a unique existence result.

A difficulty in proving the existence is the fact that the solution
propagates in an infinite-dimensional manifold,
and that we have an unbounded operator in the equation. Indeed, the DLR
equations are derived so that the aforementioned approximation $u_{S}$
keeps the specified form in time, with the fixed rank $S$. By now it is well known that the collection of functions of this form admits an infinite-dimensional manifold structure \cite[Section 3]{Falco.A_Hackbusch_Nouy_2019_FoCM}.
Besides the unbounded operator $\Lambda$, the resulting system of equations involves also a non-linear projection operator onto the tangent space to the manifold, which makes its analysis difficult and non-standard. 

Our strategy is to work with a suitable set of parameters describing the manifold, that are elements of a suitable ambient Hilbert space, and invoke results for
the evolutionary equations in linear spaces. In utilising such results,
the right choice of parametrisation turns out to be crucial. Our choice
of parameters leads us to the so-called Dual DO formulation introduced in  \cite{Musharbash.E_Nobile_2018_Dual}.

A method similar to the DLR approximation is the multi-configuration
time-dependent Hartree (MCTDH) method, which has been considered in
the context of computational quantum chemistry to approximate a deterministic
Schrödinger equation. For the MCTDH method, several existence results
have been established, e.g.\ \cite{Koch.O_Lubich_2007_regularity_existence,Bardos.C_etal_2010,Koch.O_Lubich_2011_IMA}.
The strategy used in these papers, first proposed by Koch and Lubich
\cite{Koch.O_Lubich_2007_regularity_existence}, is to consider a
constraint called the gauge condition that is defined by the differential
operator in the equation. 
With their choice of the gauge condition and their specific setting, the differential operator appears outside the projection operator, and this was a crucial step in \cite{Koch.O_Lubich_2007_regularity_existence,Bardos.C_etal_2010,Koch.O_Lubich_2011_IMA}
to apply the standard theory of abstract Cauchy problems. However, as
we will see later in Section~\ref{subsec:Discussion-on-gauge}, the
same approach does not work {in our setting}.

As mentioned above, our strategy in this paper is to work with the Dual DO formulation, by which we are able to show that the DLR approximation exists as long as a suitable full
rank condition is satisfied. Further, we discuss the extendability
of the approximation, beyond the point where we lose the full rankness. 

The rest of this paper is organised as follows. 
In Section~\ref{sec:setting}, we introduce the problem {under study}:   
the DLR equation and its equivalent formulation called Dual DO equation.
Section~\ref{sec:param-eq} introduces a parameter-equation that is equivalent to the Dual DO equations. 
Then, in Section~\ref{sec:Existence-and-Regularity} we prove our main result, namely the existence and uniqueness of a DLR solution on the maximal interval. 
The solution evolves in a manifold up to a maximal time. The solution cannot be continued in this manifold, but we will show that it can be extended in the ambient space, and the resulting continuation will take values in a different manifold with lower rank. 
Finally, in Section~\ref{sec:conclusions} we draw some conclusions.

\section{DLR formulation}\label{sec:setting}
In this section, we introduce the setting and recall some facts on
the Dynamical Low Rank (DLR) approach that will be needed later. 

We detail in Section~\ref{sec:Assumptions} the precise assumptions on $\Lambda$, $F$ and the initial conditions we will work with. 
For the moment, we just assume that a solution of \eqref{eq:exact-eq} exists.
We note, however, that the existence and uniqueness can be established by standard arguments.
For instance, if $\Lambda$ is self-adjoint and satisfies $\langle-\Lambda x,x\rangle\geq0$
for all $x\in D_{\mathcal{H}}(\Lambda)$,
by extending the definition of $\Lambda$ to random functions $u\in L^{2}(\Omega;\mathcal{H})$, where $\Lambda\colon 
D(\Lambda)\subset L^{2}(\Omega;\mathcal{H})
\to L^{2}(\Omega;\mathcal{H})$ is applied pointwise in $\Omega$, 
we have that $\Lambda$ is densely defined, closed, and satisfies
\[
\mathbb{E}[\langle-\Lambda v,v\rangle]\geq0\quad\text{for all }v\in D(\Lambda)\subset L^{2}(\Omega;\mathcal{H}).
\]
Together with a local Lipschitz continuity of $F$, existence of solutions can be established by invoking a standard theory of semi-linear evolution equations, see for
example \cite{Pazy.A_1983_book,Sell.G_You_2013_book}.

The DLR approach seeks an approximate solution of the equation \eqref{eq:exact-eq}
defined by $S$ deterministic and $S$ random basis functions. To be more precise, we define an element $u_{S}\in L^{2}(\Omega;\mathcal{H})$
to be an $S$-\textit{rank random field} if $u_{S}$ can be expressed
as a linear combination of $S$ (and not less than $S$) linearly
independent elements of $\mathcal{H}$, and $S$ (and not less than
$S$) linearly independent elements of $L^{2}(\Omega)$. Further,
we let $\hat{M}_{S}\subset L^{2}(\Omega;\mathcal{H})$ be the collection
of all the $S$-rank random fields:
\begin{align*}
&\hat{M}_{S}\\
&\!:=\!\!\:\Bigg\{u_{S}\!=\!\!\!\:\sum_{j=1}^{S}U_{i}Y_{i}
\left|
\begin{array}{rl}
\mathrm{span}_{\mathbb{R}}\{\{U_{j}\}_{j=1}^{S}\}&\!\text{is an $S$ dimensional subspace of }\mathcal{H}\\[2.5pt]
\mathrm{span}_{\mathbb{R}}\{\{Y_{j}\}_{j=1}^{S}\}&\!\text{is an $S$ dimensional subspace of }L^{2}(\Omega)\!\!\end{array}\right.
\Bigg\}.
\end{align*}
It is known that $\hat{M}_{S}$ can be equipped with a differentiable
manifold structure, see \cite{Musharbash.E_Nobile_2018_Dual,Falco.A_Hackbusch_Nouy_2019_FoCM}
and references therein. The idea behind the DLR approach is to approximate
the curve $t\mapsto u(t)\in L^2(\Omega;\mathcal{H})$ defined by the solution of
the equation \eqref{eq:exact-eq} by a curve $t\mapsto u_{S}(t)\in\hat{M}_{S}$
given as a solution of the following problem: find $u_{S}\in\hat{M}_{S}$
such that $u_{S}(0)={u_{0S}}\in\hat{M}_{S}$, {a suitable approximation of $u_0$ in $\hat{M}_{S}$}, and for (almost) all $t>0$
{we have $\frac{\partial u_{S}}{\partial t}(t)-(\Lambda u_{S}(t)+F(u_{S}(t)))\in L^{2}(\Omega;\mathcal{H})$
	and}
\begin{equation}
\mathbb{E}\Big[\Big\langle\frac{\partial u_{S}}{\partial t}(t)-(\Lambda u_{S}(t)+F(u_{S}(t))),v\Big\rangle \Big]=0,\;\text{ for all }v\in T_{u_{S}(t)}\hat{M}_{S},\label{eq:prob-var}
\end{equation}
where $T_{u_{S}(t)}\hat{M}_{S}{\subset L^{2}(\Omega;\mathcal{H})}$
is the tangent space of $\hat{M}_{S}$ at $u_{S}(t)${, and $\mathbb{E}[\cdot]$ denotes expectation with respect to the underlying probability measure $\mathbb{P}$.}

In this paper, we search for the solution in the same set as $\hat{M}_{S}$
but with a different parametrisation that is easier to work with.
The set \begin{equation}
{M}_{S}:=\Bigg\{u_{S}\!=\!\sum_{j=1}^{S}U_{i}Y_{i}
\left|
\begin{array}{rl}
\{U_{j}\}_{j=1}^{S}&\!\!\text{is linear independent in }\mathcal{H}\\[2.5pt]
\{Y_{j}\}_{j=1}^{S}&\!\!\text{is orthonormal in }L^{2}(\Omega)\end{array}
\right\}
\label{eq:def-MS}
\end{equation} is the same subset of $L^{2}(\Omega;\mathcal{H})$ as $\hat{M}_{S}$,
and thus the above problem is equivalent when we seek solutions in
$M_{S}$ instead of $\hat{M}_{S}$. This leads us to the so-called
Dual Dynamically Orthogonal (DO) formulation of the problem \eqref{eq:prob-var}.

For $u_{S}=\boldsymbol{U}^{\top}\boldsymbol{Y}\in M_{S}$, we define
the operator $\mathscr{P}_{u_{S}}\colon L^{2}(\Omega;\mathcal{H})\to L^{2}(\Omega;\mathcal{H})$
by
\[
\mathscr{P}_{u_{S}}:=P_{\boldsymbol{U}}+P_{\boldsymbol{Y}}-P_{\boldsymbol{U}}P_{\boldsymbol{Y}},
\]
where, for an arbitrary $\mathcal{H}$-orthonormal basis $\{\phi_{j}\}_{j=1}^{S}\subset\mathcal{H}$
of $\mathrm{span}_{\mathbb{R}}\{\{U_{j}\}_{j=1}^{S}\}$ the operator
$P_{\boldsymbol{U}}\colon L^{2}(\Omega;\mathcal{H})\to L^{2}(\Omega;\mathcal{H})$
is defined by
\[
P_{\boldsymbol{U}}f=\sum_{j=1}^{S}\langle f,\phi_{j}\rangle\phi_{j}\ \text{for }f\in L^{2}(\Omega;\mathcal{H}),
\]
and moreover, for an arbitrary $L^{2}(\Omega)$-orthonormal basis $\{\psi_{j}\}_{j=1}^{S}\subset L^{2}(\Omega)$
of $\mathrm{span}_{\mathbb{R}}\{\{Y_{j}\}_{j=1}^{S}\}$ the operator
$P_{\boldsymbol{Y}}\colon L^{2}(\Omega;\mathcal{H})\to L^{2}(\Omega;\mathcal{H})$
is defined by
\begin{equation}
P_{\boldsymbol{Y}}f=\sum_{j=1}^{S}\mathbb{E}[f\psi_{j}]\psi_{j}\ \text{for }f\in L^{2}(\Omega;\mathcal{H}).\label{eq:def-PY}
\end{equation}
This operator $\mathscr{P}_{u_{S}}$ turns out to be the $L^{2}(\Omega;\mathcal{H})$-orthogonal
projection to the tangent space $T_{u_{S}}M_{S}$ at $u_{S}=\boldsymbol{U}{}^{\top}\boldsymbol{Y}$,
see \cite[Proposition 3.3]{Musharbash.E_etal_2015_SISC} together
with \cite{Conte.D_Lubich_2010_error_MCTDH}. We note that the operator $\mathscr{P}_{u_{S}}$ is independent
of the choice of the representation of $u_{S}$: for any full rank
matrix $C\in\mathbb{R}^{S\times S}$ we have $(C^{\top}\boldsymbol{U})^{\top}C^{-1}\boldsymbol{Y}=u_{S}=\boldsymbol{U}^{\top}\boldsymbol{Y}$,
but also $P_{[C^{\top}\boldsymbol{U}]}=P_{\boldsymbol{U}}$ and $P_{[C^{-1}\boldsymbol{Y}]}=P_{\boldsymbol{Y}}$.

Using the above definitions, the problem we consider, equivalent to
\eqref{eq:prob-var}, can be formulated as follows: 
\begin{problem*}
	Find ${t\mapsto u_{S}(t)}\in M_{S}$ such that $u_{S}(0)={u_{0S}}\in M_{S}$ and for
	$t>0$ we have
	\begin{equation}
	\frac{\partial u_{S}}{\partial t}(t)=\mathscr{P}_{u_{S}(t)}(\Lambda u_{S}(t)+F(u_{S}(t))).\label{eq:original-eq-with-proj}
	\end{equation}
	In this paper, we consider two notions of solutions of this problem:
	the strong and classical solution.
\end{problem*}
\begin{definition}
	\label{def:original-strong}A function $u_{S}\colon[0,T]\to M_{S}\subset L^{2}(\Omega;\mathcal{H})$
	is called a \textit{strong solution} of the initial value problem
	\eqref{eq:original-eq-with-proj} if $u_{S}(0)={u_{0S}}\in M_{S}$, $u_{S}$ is {absolutely continuous on $[0,T]$,}
	and \eqref{eq:original-eq-with-proj} is satisfied a.e.\ on $[0,T]$.
	Further, we call $u_S$ a strong solution on $[0,T)$ if it is a strong solution on any subinterval $[0,T']\subset[0,T)$.
\end{definition}
In practice, further regularity of $u_{S}$ may be of interest.
\begin{definition}
	\label{def:original-classical}A function $u_{S}\colon[0,T]\to M_{S}\subset L^{2}(\Omega;\mathcal{H})$
	is called a \textit{classical solution} of \eqref{eq:original-eq-with-proj}
	on $[0,T]$ if $u_{S}(0)={u_{0S}}\in M_{S}$, $u_{S}$ is absolutely continuous on $[0,T]$, continuously differentiable on {$(0,T]$}, $u_{S}{\in D(\Lambda)}$
	for $t\in{(0,T]}$, and \eqref{eq:original-eq-with-proj} is satisfied
	on {$(0,T]$}.
	Further, we call $u_S$ a classical solution on $[0,T)$ when it is a classical solution on any subinterval $[0,T']\subset[0,T)$.
\end{definition}

\subsection{Dual DO formulation}
Our aim is to establish the unique existence of a solution to problem \eqref{eq:original-eq-with-proj}.
A difficulty is that $u_{S}$ propagates in a non-linear manifold ${M}_{S}$. 
Our strategy is to choose a suitable parametrisation of ${M}_{S}$, and work in a linear space {which the parameters belong to}.
For the parametrisation, we will choose the one proposed in \cite{Musharbash.E_Nobile_2018_Dual}, which results in a formulation of  \eqref{eq:original-eq-with-proj} 
called Dual DO, 
where we seek an approximate solution of the form $u_{S}(t)=\boldsymbol{U}^{\top}(t)\boldsymbol{Y}(t)\in M_{S}$ for any $[0,T]$.
Here, the parameter $(\boldsymbol{U}(t),\boldsymbol{Y}(t))\in[\mathcal{H}]^{S}\times[L^{2}(\Omega)]^{S}$
is a solution to the following problem:
\begin{enumerate}
	\item the components of $\boldsymbol{U}(t)=(U_{1}(t),\dots,U_{S}(t))^{\top}$ are linearly
	independent in $\mathcal{H}$ for any $t\in[0,T]$;
	\item the components of $\boldsymbol{Y}(t)=(Y_{1}(t),\dots,Y_{S}(t))^{\top}$ are orthonormal
	in $L^{2}(\Omega)$, and satisfy the so-called gauge condition: for any $t\in(0,T)$,
	\[
	\mathbb{E}\left[\frac{\partial Y_{j}}{\partial t}Y_{k}\right]=0\ \text{for}\ j,k=1,\dotsc,S,\text{ equivalently, }\mathbb{E}\left[\frac{\partial\boldsymbol{Y}}{\partial t}\boldsymbol{Y}^{\top}\right]=0\in\mathbb{R}^{S\times S};
	\]
	\item $(\boldsymbol{U},\boldsymbol{Y})$ satisfies the equation
	\begin{equation}
	\left\{ \begin{array}{rl}
	\frac{\partial}{\partial t}\boldsymbol{U}& =\mathbb{E}\left[\mathcal{L}(u_S)\boldsymbol{Y}\right]\\
	Z_{\boldsymbol{U}}\frac{\partial}{\partial t}\boldsymbol{Y} & =(I-P_{\boldsymbol{Y}})\left[\langle\mathcal{L}(u_S),\boldsymbol{U}\rangle\right],
	\end{array}\right.\label{eq:general-dualDO}
	\end{equation}
	where $\mathcal{L}:={\Lambda+F}$,
	$P_{\boldsymbol{Y}}$ is as in \eqref{eq:def-PY}, and $Z_{\boldsymbol{U}}={(\langle U_{j},U_{k}\rangle)_{j,k=1,\dotsc,S}}\in\mathbb{R}^{S\times S}$
	is the Gram matrix defined by $\boldsymbol{U}$;
	\item $(\boldsymbol{U},\boldsymbol{Y})$ satisfies the initial condition $(\boldsymbol{U}(0),\boldsymbol{Y}(0))=(\boldsymbol{U}_{0},\boldsymbol{Y}_{0})$ for some $(\boldsymbol{U}_{0},\boldsymbol{Y}_{0})\in[\mathcal{H}]^{S}\times[L^{2}(\Omega)]^{S}$
	such that 
	$\boldsymbol{U}_{0}^{\top}\boldsymbol{Y}_{0}{=u_{0S}}\in M_{S}$.
\end{enumerate}
Noting that, since the operator $\Lambda$ is deterministic and linear, we have \[P_{\boldsymbol{Y}}(\langle\Lambda(u_{S}),\boldsymbol{U}\rangle)=P_{\boldsymbol{Y}}(\langle\Lambda(\boldsymbol{U}^{\top})\boldsymbol{Y},\boldsymbol{U}\rangle)=\langle\Lambda(u_{S}),\boldsymbol{U}\rangle
\]
and 
$\mathbb{E}[\Lambda(u_{S})\boldsymbol{Y}^{\top}]
=\Lambda(\boldsymbol{U}^{{\top}})
\mathbb{E}[
\boldsymbol{Y}\boldsymbol{Y}^{\top}
]
=\Lambda(\boldsymbol{U}^\top)$,
the equation \eqref{eq:general-dualDO} reads 
\begin{alignat}{1}
\left\{\begin{array}{rll}
\frac{\partial}{\partial t}\boldsymbol{U} & =\Lambda(\boldsymbol{U})+\mathbb{E}\left[F(\boldsymbol{U}^{\top}\boldsymbol{Y})\boldsymbol{Y}\right] & =:\Lambda(\boldsymbol{U})+G_{1}(\boldsymbol{Y})(\boldsymbol{U})\\
\frac{\partial}{\partial t}\boldsymbol{Y} & =(I-P_{\boldsymbol{Y}})(\langle F(\boldsymbol{U}^{\top}\boldsymbol{Y}),Z_{\boldsymbol{U}}^{-1}\boldsymbol{U}\rangle) & =:G_{2}(\boldsymbol{U})(\boldsymbol{Y}).
\end{array}\right.\label{eq:working-eq}
\end{alignat}
We define two notions of solutions to the initial value problem of
\eqref{eq:working-eq} that correspond to those of the original problem
as in Definitions~\ref{def:original-strong}--\ref{def:original-classical}.
\begin{definition}
	\label{def:Dual-DO-strong}A function $(\boldsymbol{U},\boldsymbol{Y})\colon[0,T]\to[\mathcal{H}]^{S}\times[L^{2}(\Omega)]^{S}$
	is called a \textit{Dual DO solution of the problem
		\eqref{eq:original-eq-with-proj} in the strong sense} if $(\boldsymbol{U},\boldsymbol{Y})$
	satisfies the following conditions:
	\begin{enumerate}
		\item $(\boldsymbol{U}(0),\boldsymbol{Y}(0))=(\boldsymbol{U}_{0},\boldsymbol{Y}_{0})$
		for some $(\boldsymbol{U}_{0},\boldsymbol{Y}_{0})\in[\mathcal{H}]^{S}\times[L^{2}(\Omega)]^{S}$
		such that ${u_{0S}}=\boldsymbol{U}_{0}^{\top}\boldsymbol{Y}_{0}\in M_{S}$;
		\item $(\boldsymbol{U},\boldsymbol{Y})$ satisfies the equation \eqref{eq:working-eq}
		a.e.\ on $[0,T]$;
		\item the curve $t\mapsto\boldsymbol{U}(t)\in[\mathcal{H}]^{S}$ is {absolutely continuous on $[0,T]$};
		\item the curve $t\mapsto\boldsymbol{Y}(t)\in[L^{2}(\Omega)]^{S}$ is {absolutely continuous on $[0,T]$};
		\item \label{enu:U-indep}$\{U_{j}(t)\}_{j=1}^{S}$ is linear independent
		in $\mathcal{H}$ for almost every $t\in[0,T]$; and
		\item $\{Y_{j}(t)\}_{j=1}^{S}$ is orthonormal in $L^{2}(\Omega)$ for almost
		every $t\in[0,T]$.
	\end{enumerate}
	Notice, in particular, that the condition~\ref{enu:U-indep} above
	implies that the matrix $Z_{\boldsymbol{U}}$ is invertible for almost
	every $t\in[0,T]$. Further, from \eqref{eq:working-eq} we necessarily
	have
	\begin{equation}
	\mathbb{E}\Big[\Big(\frac{\partial}{\partial t}\boldsymbol{Y}\Big)\boldsymbol{Y}^{\top}\Big]=\mathbb{E}\Big[\langle F(\boldsymbol{U}^{\top}\boldsymbol{Y}),Z_{\boldsymbol{U}}^{-1}\boldsymbol{U}\rangle(I-P_{\boldsymbol{Y}})\boldsymbol{Y}^{\top}\Big]=0.\label{eq:implied-gauge}
	\end{equation}
\end{definition}

\begin{definition}
	\label{def:Dual-DO-classical}A function $(\boldsymbol{U},\boldsymbol{Y})\colon[0,T]\to[\mathcal{H}]^{S}\times[L^{2}(\Omega)]^{S}$
	is called a \textit{{Dual DO solution of the problem
			\eqref{eq:original-eq-with-proj}} in the classical sense} if $(\boldsymbol{U},\boldsymbol{Y})$
	satisfies the following conditions:
	\begin{enumerate}
		\item $(\boldsymbol{U}(0),\boldsymbol{Y}(0))=(\boldsymbol{U}_{0},\boldsymbol{Y}_{0})$
		for some $(\boldsymbol{U}_{0},\boldsymbol{Y}_{0})\in[\mathcal{H}]^{S}\times[L^{2}(\Omega)]^{S}$
		such that ${u_{0S}}=\boldsymbol{U}_{0}^{\top}\boldsymbol{Y}_{0}\in M_{S}$;
		\item $(\boldsymbol{U},\boldsymbol{Y})$ satisfies the equation \eqref{eq:working-eq}
		on ${(0,T]}$;
		\item the curve $t\mapsto\boldsymbol{U}(t)\in[\mathcal{H}]^{S}$ is 
		{absolutely continuous on $[0,T]$, continuously differentiable on $(0,T]$};
		\item the curve $t\mapsto\boldsymbol{Y}(t)\in[L^{2}(\Omega)]^{S}$ is
		{absolutely continuous on $[0,T]$, continuously differentiable on $(0,T]$};
		\item $U_{j}(t)\in D_{\mathcal{H}}(\Lambda)$ for any $t\in{(0,T]}$, $j=1,\dots,S$;
		\item $\{U_{j}(t)\}_{j=1}^{S}$ is linear independent in $\mathcal{H}$
		for any $t\in[0,T]$;
		\item $\{Y_{j}(t)\}_{j=1}^{S}$ is orthonormal in $L^{2}(\Omega)$ for any
		$t\in[0,T]$.
	\end{enumerate}
\end{definition}

\begin{definition}
	If $(\boldsymbol{U},\boldsymbol{Y})\colon[0,T)\to[\mathcal{H}]^{S}\times[L^{2}(\Omega)]^{S}$
	is a Dual DO solution on all subintervals $[0,T']\subset[0,T)$
	in the strong (resp.~classical) sense, then we call $(\boldsymbol{U},\boldsymbol{Y})$
	a Dual DO solution on $[0,T)$ in the strong (resp.~classical)
	sense.
\end{definition}

As we will see in the next section, establishing the unique existence
of the Dual DO solution is equivalent to establishing the unique existence of solutions to
the original equation \eqref{eq:original-eq-with-proj}. Thus, for
the rest of this paper we will work with the Dual DO formulation. 
\subsection{\label{subsec:equiv-DLR-DO}Equivalence with the original formulation}
In this section, we establish the equivalence of the original
equation \eqref{eq:original-eq-with-proj} and the Dual DO {formulation}
as in Definitions~\ref{def:Dual-DO-strong}--\ref{def:Dual-DO-classical}. 
Our first step is to show that if a solution $u_{S}$ of the original
equation \eqref{eq:original-eq-with-proj} is given, then there exists
a unique solution of \eqref{eq:working-eq} that is also the unique Dual DO solution $t\mapsto(\boldsymbol{U}(t),\boldsymbol{Y}(t))\in[\mathcal{H}]^{S}\times[L^{2}(\Omega)]^{S}$ of \eqref{eq:original-eq-with-proj} 
such that $u_{S}=\boldsymbol{U}^{\top}\boldsymbol{Y}$, see Lemma \ref{lem:unique-lift}.

We will need a proposition which states that 
if $t\mapsto u_S(t)\in M_S\subset L^{2}(\Omega;\mathcal{H})$ is differentiable, then there exists a differentiable parametrisation. 
This result may be seen as a generalisation of the existence of smooth singular value decompositions of matrix-valued curve considered, for example, in \cite{Dieci.L_Eirola_1999_SIMAX,Chern.J_Dieci_2000_SIMAX_smooth_decomp}.
We start with the following lemma, which shows the existence of the singular value decomposition for elements in $M_S$. 
\begin{lemma}
	\label{lem:uS-SVD}Let $u_{S}\in M_{S}\subset L^{2}(\Omega;\mathcal{H})$
	be given. Then, with some $\{\tilde{V}_{j}\}_{j=1}^{S}$ and $\{W_{j}\}_{j=1}^{S}$
	orthonormal in $\mathcal{H}$ and $L^{2}(\Omega)$, respectively,
	and $\sigma_{j}>0$, $j=1,\dots,S$, we have 
	\[
	u_{S}=\sum_{j=1}^{S}\sigma_{j}\tilde{V}_{j}W_{j}.
	\]
	Moreover, such $\sigma_{j}>0$ is unique in the following sense: for any other representation
	$u_{S}=\sum_{j=1}^{S}\sigma'_{j}\tilde{V}'_{j}W'_{j}$ with $\{\tilde{V}_{j}'\}_{j=1}^{S}$
	and $\{W_{j}'\}_{j=1}^{S}$ orthonormal, upon relabelling if necessary,
	we have $\sigma'_{j}=\sigma{}_{j}$, $j=1,\dots,S$. Furthermore,
	if $[0,T]\ni t\mapsto u_{S}(t)\in M_{S}\subset L^{2}(\Omega;\mathcal{H})$
	is continuous, then the corresponding values $\{\sigma_{j}(t)\}_{j=1}^{S}$
	satisfy 
	\begin{equation}
	0<\min_{j=1,\dots,S}\inf_{t\in[0,T]}\sigma_{j}(t)\qquad\text{and}\qquad\max_{j=1,\dots,S}\sup_{t\in[0,T]}\sigma_{j}(t)<\infty.\label{eq:bds-eig}
	\end{equation}
	
\end{lemma}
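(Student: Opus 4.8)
The plan is to attach to each $u_{S}\in M_{S}$ a single intrinsic object --- the ``correlation operator'' $R_{u_{S}}\colon\mathcal{H}\to\mathcal{H}$, $R_{u_{S}}h:=\mathbb{E}[\langle u_{S},h\rangle\,u_{S}]$ --- and to read off the numbers $\sigma_{j}$ as the square roots of its eigenvalues. \emph{Existence of the decomposition} is then essentially the matrix singular value decomposition: fix any representation $u_{S}=\sum_{i=1}^{S}U_{i}Y_{i}$ with $\{Y_{i}\}_{i=1}^{S}$ orthonormal in $L^{2}(\Omega)$ and $\{U_{i}\}_{i=1}^{S}$ linearly independent in $\mathcal{H}$, available from the definition of $M_{S}$; pick an $\mathcal{H}$-orthonormal basis $\{\phi_{k}\}_{k=1}^{S}$ of $\mathrm{span}_{\mathbb{R}}\{U_{i}\}_{i=1}^{S}$ and write $U_{i}=\sum_{k}A_{ik}\phi_{k}$ with $A=(A_{ik})\in\mathbb{R}^{S\times S}$ invertible, so that $u_{S}=\sum_{k}\phi_{k}(A^{\top}\boldsymbol{Y})_{k}$. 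Taking the matrix SVD $A^{\top}=P\Sigma Q^{\top}$ with $P,Q\in\mathbb{R}^{S\times S}$ orthogonal and $\Sigma=\mathrm{diag}(\sigma_{1},\dots,\sigma_{S})$, $\sigma_{j}>0$, and setting $\tilde{V}_{j}:=(P^{\top}\boldsymbol{\phi})_{j}$, $W_{j}:=(Q^{\top}\boldsymbol{Y})_{j}$ produces orthonormal systems in $\mathcal{H}$ and $L^{2}(\Omega)$ with $u_{S}=\sum_{j=1}^{S}\sigma_{j}\tilde{V}_{j}W_{j}$.

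\textbf{Uniqueness of the $\sigma_{j}$.} The operator $R_{u_{S}}$ is well defined (by Cauchy--Schwarz $\|R_{u_{S}}h\|\le\|h\|\,\|u_{S}\|_{L^{2}(\Omega;\mathcal{H})}^{2}$), self-adjoint, positive, and depends only on $u_{S}$. For any representation $u_{S}=\sum_{j=1}^{S}\sigma'_{j}\tilde{V}'_{j}W'_{j}$ with $\sigma'_{j}>0$ and $\{\tilde{V}'_{j}\}$, $\{W'_{j}\}$ orthonormal, a direct computation using the orthonormality of $\{W'_{j}\}$ gives $R_{u_{S}}h=\sum_{j=1}^{S}(\sigma'_{j})^{2}\langle\tilde{V}'_{j},h\rangle\tilde{V}'_{j}$; hence $R_{u_{S}}$ has rank exactly $S$ and its $S$ positive eigenvalues, counted with multiplicity, are precisely $\{(\sigma'_{j})^{2}\}_{j=1}^{S}$. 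Since the spectrum of $R_{u_{S}}$ is intrinsic to $u_{S}$, the multiset $\{\sigma_{j}\}_{j=1}^{S}$ is determined up to relabelling, which is the asserted uniqueness.

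\textbf{The uniform bounds.} Assume now $t\mapsto u_{S}(t)$ is continuous on $[0,T]$. Expanding the norm with the orthonormality of $\{\tilde{V}_{j}(t)\}$, $\{W_{j}(t)\}$ gives $\sum_{j=1}^{S}\sigma_{j}(t)^{2}=\|u_{S}(t)\|_{L^{2}(\Omega;\mathcal{H})}^{2}$, and the right-hand side is bounded on the compact interval $[0,T]$, so $\max_{j}\sup_{t}\sigma_{j}(t)^{2}\le\sup_{t}\|u_{S}(t)\|_{L^{2}(\Omega;\mathcal{H})}^{2}<\infty$. For the lower bound I would use that $u\mapsto R_{u}$ is locally Lipschitz from $L^{2}(\Omega;\mathcal{H})$ into the bounded operators on $\mathcal{H}$ --- being quadratic, $\|R_{u}-R_{v}\|\le(\|u\|+\|v\|)\|u-v\|$ --- so that $t\mapsto R_{u_{S}(t)}$ is norm-continuous; Weyl's eigenvalue-perturbation inequality $|\lambda_{k}(R_{u_{S}(t)})-\lambda_{k}(R_{u_{S}(t')})|\le\|R_{u_{S}(t)}-R_{u_{S}(t')}\|$ for the $k$-th largest eigenvalue then shows $t\mapsto\lambda_{S}(R_{u_{S}(t)})=\min_{j}\sigma_{j}(t)^{2}$ is continuous on $[0,T]$. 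Because $u_{S}(t)\in M_{S}$ forces $R_{u_{S}(t)}$ to have rank exactly $S$, this function is strictly positive at every $t$, and a continuous strictly positive function on the compact interval $[0,T]$ has a positive infimum, giving $0<\min_{j}\inf_{t}\sigma_{j}(t)$. Everything except this last estimate is routine linear algebra and Cauchy--Schwarz; the one point needing care is precisely the lower bound --- one must ensure the rank of $R_{u_{S}(t)}$ cannot drop below $S$ along the curve (so that the $S$-th eigenvalue never collapses to $0$ in a limit) and invoke the correct perturbation statement for the spectrum of compact self-adjoint operators.
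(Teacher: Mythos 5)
Your proof is correct and takes essentially the same approach as the paper: the paper works with the finite-rank operator $K\colon L^{2}(\Omega)\to\mathcal{H}$, $Kw=\mathbb{E}[u_{S}w]$, and its self-adjoint square $K^{*}K$ on $L^{2}(\Omega)$, whereas you use the adjoint square $KK^{*}=R_{u_{S}}$ on $\mathcal{H}$, which carries the same nonzero spectrum. The uniqueness and the uniform bounds are obtained via the same eigenvalue/singular-value perturbation argument (the paper quotes the singular-value version, you invoke Weyl for the self-adjoint $R_{u_{S}}$), and your upper bound via $\sum_{j}\sigma_{j}^{2}=\|u_{S}\|_{L^{2}(\Omega;\mathcal{H})}^{2}$ is a slightly cleaner variant of the paper's estimate.
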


\begin{proof}
	{The linear operator $K=K(u_{S})$ defined by $L^{2}(\Omega)\ni w\mapsto Kw:=\mathbb{E}[u_{S}w]\in\mathcal{H}$
		is a finite-rank operator with rank $S$, with the image being independent
		of the representation of $u_{S}=\boldsymbol{U}^{\top}\boldsymbol{Y}\in M_{S}$.}
	%
	Thus, 
	with some $\{\tilde{V}_{j}\}_{j=1}^{S}$ and $\{W_{j}\}_{j=1}^{S}$ orthonormal in {$\mathcal{H}$ and $L^{2}(\Omega)$},  respectively, $K$ admits the canonical decomposition
	
	\[
	Kw=\sum_{j=1}^{S}\sigma_{j}\mathbb{E}[wW_{j}]\tilde{V}_{j},
	\]
	with singular values $\sigma_{j}=\sigma_{j}(K)>0$, $j=1,\dots,S$,
	see e.g.~\cite[Sections III.4.3 and V.2.3]{Kato.T_book_1995_reprint}.
	If we have another representation $u_{S}=\sum_{j=1}^{S}\sigma'_{j}\tilde{V}'_{j}W'_{j}$,
	then upon relaballing if necessary, we must have $\sigma'_{j}=\sigma{}_{j}$.
	To see this, first note that the adjoint operator $\mathcal{H}\ni v\mapsto K^{*}v:=\langle u_{S},v\rangle\in L^{2}(\Omega)$
	is a finite-rank operator with rank $S$. The operator $K^{*}K$ is
	also rank $S$ and admits the spectral decomposition
	\[
	K^{*}Kw=\sum_{j=1}^{S}\sigma_{j}^{2}\mathbb{E}[wW_{j}]W_{j},
	\]
	with eigenvalues $\{\sigma_{j}^{2}\}_{j=1}^{S}$ and the corresponding
	eigenfuncitons $\{W_{j}\}_{j=1}^{S}$. Similarly, if we have a representation
	$u_{S}=\sum_{j=1}^{S}\sigma'_{j}\tilde{V}'_{j}W'_{j}$, then $\{W'_{k}\}_{k=1}^{S}$
	are also eigenfunctions of $K^{*}K$ corresponding to the eigenvalues
	$\{(\sigma_{k}')^{2}\}_{k=1}^{S}$. Thus, for the image of $K^{*}K$
	to be $S$-dimesnional, we must have $\{\sigma_{k}'\mid k=1,\dots,S\}=\{\sigma_{j}\mid j=1,\dots,S\}$,
	and moreover each eigenvalue $\sigma_{j}'=\sigma_{k}$ must have the
	same (geometric) multiplicity.
	
	To show \eqref{eq:bds-eig}, relabel $\{\sigma_{j}(t)\}_{j=1}^{S}$
	in the non-decreasing order and denote it by $(\alpha_{j}(t))_{j=1}^{S}$.
	Then, for any $t\in[0,T]$ and  $h\in\mathbb{R}$ such that $t+h\in[0,T]$ we have 
	\begin{align*}
	|\alpha_{j}(t+h)-\alpha_{j}(t)| & \leq\|K(u_{S}(t+h))-K(u_{S}(t))\|_{L^{2}(\Omega)\to\mathcal{H}}\quad\text{for }j=1,\dots,S,
	\end{align*}
	see for example \cite[Proposition II.7.6 and Theorem IV.2.2]{Pinkus.A_1985_book_n_width}.
	But for any $w\in L^{2}(\Omega)$ we have 
	\begin{align*}
	\|K(u_{S}(t+h))w-K(u_{S}(t))w\|_{\mathcal{H}} 
	& \leq\Big(\mathbb{E}[\|u_{S}(t+h)-u_{S}(t)\|_{\mathcal{H}}^{2}]\Big)^{1/2}\|w\|_{L^{2}(\Omega)},
	\end{align*}
	and thus the continuity of $t\mapsto u_{S}(t)$ implies that $\alpha_{j}$
	is continuous on $[0,T]$. Now, since $K$ is of rank $S$, we have
	$\alpha_{j}(t)>0$ for any $t\in[0,T]$. Hence, for any $j=1,\dots,S$
	we have 
	\[
	\inf_{t\in[0,T]}\sigma_{j}(t)\geq\min_{t\in[0,T]}\alpha_{1}(t)>0.
	\]
	Similarly, we have $\sup_{t\in[0,T]}\sigma_{j}(t)\leq\max_{t\in[0,T]}\alpha_{S}(t)<\infty$,
	which completes the proof.
	\end{proof}
\begin{proposition}\label{prop:smooth-SVD}
	Suppose that $[0,T]\ni t\mapsto u_{S}(t)\in M_{S}\subset L^{2}(\Omega;\mathcal{H})$
	is absolutely continuous. Then, there exist $t\mapsto\tilde{V}_{j}(t)\in\mathcal{H}$,
	$t\mapsto\Sigma(t)\in\mathbb{R}^{S\times S}$, and 
	$t\mapsto W_{j}(t)\in L^{2}(\Omega)$,
	$j=1,\dots,S$ such that 
	\[
	u_{S}(t)=
	\tilde{\boldsymbol{V}}(t)^\top\Sigma(t)\boldsymbol{W}(t)
	\qquad\text{for all }t\in[0,T];
	\]
	$\{\tilde{V}_{j}(t)\}_{j=1}^{S}$ and $\{W_{j}(t)\}_{j=1}^{S}$ are
	orthonormal in $\mathcal{H}$ and in $L^{2}(\Omega)$, respectively; $\Sigma(t)$ is full rank; 
	the curves 
	$t\mapsto\Sigma(t)\in\mathbb{R}^{S\times S}$,  $t\mapsto\tilde{V}_{j}(t)\in\mathcal{H}$, 
	and $t\mapsto W_{j}(t)\in L^{2}(\Omega)$, $j=1,\dots,S$ are absolutely
	continuous on $[0,T]$. 
	Moreover, if $u_{S}(t)$ is continuously differentiable on $(0,T]$, then 
	$\tilde{V}_{j}(t)$, $\Sigma(t)$, and $W_{j}(t)$ are continuously differentiable on $(0,T]$. 
	In particular, $u_{S}(t)$ admits a representation
	$u_{S}(t)=\boldsymbol{V}(t)^{\top}\boldsymbol{W}(t)$ in $M_{S}$ with $\boldsymbol{V}^{\top}=\tilde{\boldsymbol{V}}^{\top}\Sigma$, 
	with the specified smoothness.
\end{proposition}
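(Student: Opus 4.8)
The plan is to build the factorization $u_S(t)=\tilde{\boldsymbol V}(t)^\top\Sigma(t)\boldsymbol W(t)$ locally in time and then patch local pieces together. By Lemma~\ref{lem:uS-SVD}, at every fixed $t$ the operator $K(t)=K(u_S(t))\colon L^2(\Omega)\to\mathcal H$, $w\mapsto\mathbb E[u_S(t)w]$, has rank exactly $S$ and its singular values $\sigma_1(t)\ge\cdots\ge\sigma_S(t)$ are bounded uniformly away from $0$ and from $\infty$ on $[0,T]$; moreover $t\mapsto K(t)$ is absolutely continuous as a map into the Banach space of bounded operators $L^2(\Omega)\to\mathcal H$, since $\|K(t+h)-K(t)\|\le(\mathbb E\|u_S(t+h)-u_S(t)\|_{\mathcal H}^2)^{1/2}$ and the right-hand side is controlled by the $L^1$-norm of $\frac{\mathrm d}{\mathrm dt}u_S$ over $[t,t+h]$. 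The point is to extract absolutely continuous singular vectors from an absolutely continuous, uniformly nondegenerate finite-rank operator curve.

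First I would reduce to the self-adjoint positive problem: set $A(t):=K(t)^*K(t)\colon L^2(\Omega)\to L^2(\Omega)$, a rank-$S$ self-adjoint nonnegative operator curve, absolutely continuous in operator norm (product of bounded absolutely continuous curves), with spectrum in $\{0\}\cup[c^2,C^2]$ for the constants $c,C$ from \eqref{eq:bds-eig}. The spectral projector $Q(t)$ onto $\mathrm{range}(A(t))=\mathrm{range}(K(t)^*)$ is given by a Riesz integral $Q(t)=\frac{1}{2\pi i}\oint_\gamma(zI-A(t))^{-1}\,\mathrm dz$ over a fixed contour $\gamma$ separating $0$ from $[c^2,C^2]$; since resolvents of an absolutely continuous operator curve with uniformly bounded resolvent along $\gamma$ are themselves absolutely continuous, $t\mapsto Q(t)$ is absolutely continuous into $\mathcal B(L^2(\Omega))$. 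On this $S$-dimensional range I transport a fixed orthonormal frame by the standard Kato/Dieci--Eirola device: solve the linear ODE $\dot T(t)=[\dot Q(t),Q(t)]\,T(t)$, $T(0)=I$ (in $\mathcal B(L^2(\Omega))$, with $\dot Q\in L^1$), whose solution $T(t)$ is an absolutely continuous family of isomorphisms satisfying $T(t)Q(0)T(t)^{-1}=Q(t)$; applying $T(t)$ to an orthonormal basis of $\mathrm{range}(Q(0))$ and orthonormalizing via Gram--Schmidt (which preserves absolute continuity since the Gram matrix stays uniformly invertible) yields an orthonormal, absolutely continuous basis $\{W_j(t)\}_{j=1}^S$ of $\mathrm{range}(K(t)^*)$. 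Then define $\Sigma(t):=\big(\mathbb E[\langle K(t)W_k(t),\,\cdot\,\rangle W_j(t)]\big)$ — more concretely $\tilde V_j(t):=\sigma_j$-normalisation is not needed; instead set the (not necessarily diagonal) matrix $\Sigma(t):=\big(\langle \cdot\rangle\big)$ by $\big(\Sigma(t)\big)$ via $\tilde{\boldsymbol V}(t)^\top\Sigma(t):=$ the coordinates of $K(t)$ in the frame $\{W_j(t)\}$: explicitly put $G_j(t):=K(t)W_j(t)\in\mathcal H$, which lies in the fixed-dimensional space $\mathrm{range}(K(t))$ and is absolutely continuous, form its Gram matrix $\big(\langle G_j(t),G_k(t)\rangle\big)_{j,k}$ — uniformly invertible by the lower bound on the $\sigma_j$ — Gram--Schmidt it to get an orthonormal absolutely continuous frame $\{\tilde V_j(t)\}_{j=1}^S$ in $\mathcal H$, and let $\Sigma(t):=\big(\langle G_k(t),\tilde V_j(t)\rangle\big)_{j,k}\in\mathbb R^{S\times S}$, which is absolutely continuous and full rank. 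One checks $u_S(t)=\sum_j K(t)W_j(t)\,W_j(t)=\tilde{\boldsymbol V}(t)^\top\Sigma(t)\boldsymbol W(t)$ because $\{W_j(t)\}$ spans $\mathrm{range}(K(t)^*)$, so $u_S(t)$ (viewed through $\langle u_S(t),v\rangle=K(t)^*v$) is unchanged. Finally, for the $C^1$ statement: if $u_S$ is continuously differentiable on $(0,T]$, then $K$, $A$, the contour resolvents, $Q$, the transport ODE coefficient $[\dot Q,Q]$, hence $T$, hence $\{W_j\}$, $\{G_j\}$, the Gram matrices and Gram--Schmidt outputs, and $\Sigma$ are all $C^1$ on $(0,T]$ by the same chain of arguments with "absolutely continuous" replaced by "$C^1$"; this gives the last sentence with $\boldsymbol V^\top:=\tilde{\boldsymbol V}^\top\Sigma$.

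The main obstacle is the bookkeeping that every operation preserves absolute continuity (and $C^1$) with quantitative control: the Riesz-projector argument needs a uniform resolvent bound on $\gamma$, which is exactly where the uniform spectral gap \eqref{eq:bds-eig} from Lemma~\ref{lem:uS-SVD} is essential; the transport ODE needs $\dot Q\in L^1(0,T;\mathcal B)$, which follows from $Q$ absolutely continuous; and the two Gram--Schmidt steps need the relevant Gram matrices to stay uniformly positive definite, again guaranteed by \eqref{eq:bds-eig}. A secondary subtlety is that I have chosen to let $\Sigma(t)$ be a general full-rank matrix rather than insisting it be diagonal: this is harmless for the stated conclusion (the proposition only asks for $\Sigma$ full rank and absolutely continuous, with $\boldsymbol V^\top=\tilde{\boldsymbol V}^\top\Sigma$), and it sidesteps the classical difficulty that individual singular values may cross, at which point a smooth \emph{diagonal} SVD need not exist — avoiding the need for the more delicate analytic-perturbation arguments of \cite{Dieci.L_Eirola_1999_SIMAX,Chern.J_Dieci_2000_SIMAX_smooth_decomp}. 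Everything else — orthonormality of $\{W_j(t)\}$ and $\{\tilde V_j(t)\}$ at each $t$, and the identity $u_S(t)=\tilde{\boldsymbol V}(t)^\top\Sigma(t)\boldsymbol W(t)$ — is then a pointwise linear-algebra verification.
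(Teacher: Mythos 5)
Your proof is correct, and it takes a genuinely different route from the paper's. The paper turns the problem into a coupled ODE system for $(\Sigma,\tilde{\boldsymbol V},\boldsymbol W)$ driven by the known $L^1$ forcing $\dot u_S$ --- essentially the DDO-type evolution with $u_S$ prescribed --- and then runs the same machinery it developed for the Dual DO equation: local existence via the Lipschitz estimates on projections and on $Z^{-1}$ (Lemmata~\ref{lem:lip-projY}, \ref{lem:lip-ZU}), preservation of orthonormality as in Corollary~\ref{cor:DO-stong-local}, and continuation to $[0,T]$ via the uniform spectral bounds of Lemma~\ref{lem:uS-SVD}, mimicking Theorem~\ref{thm:strong-maximal}. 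You instead pass to the compact self-adjoint curve $A(t)=K(t)^*K(t)$, construct the rank-$S$ spectral projector $Q(t)$ via a Riesz contour integral (using exactly the same uniform spectral gap from Lemma~\ref{lem:uS-SVD} to keep the contour fixed and the resolvent bounded), parallel-transport a frame with Kato's linear ODE $\dot T=[\dot Q,Q]T$, and then Gram--Schmidt on the $\mathcal H$ side. Both approaches avoid the delicate singular-value-crossing issue of \cite{Dieci.L_Eirola_1999_SIMAX,Chern.J_Dieci_2000_SIMAX_smooth_decomp} by not insisting $\Sigma$ be diagonal, and both use Lemma~\ref{lem:uS-SVD} as the essential nondegeneracy input. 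The practical tradeoff: your transport ODE is \emph{linear}, so global existence on $[0,T]$ is automatic and you need no separate continuation step; the paper's ODE is nonlinear in $(\Sigma,\tilde{\boldsymbol V},\boldsymbol W)$, but its proof recycles machinery already set up for the main theorem and stays closer in spirit to the DLR/DDO structure of the paper. One small redundancy in your argument: since $Q$ is an orthogonal projection and $\dot Q$ is self-adjoint, $[\dot Q,Q]$ is skew-adjoint and hence $T(t)$ is automatically unitary; the Gram--Schmidt step on $\{T(t)W_j(0)\}$ is therefore unnecessary (though harmless). The remaining point worth being explicit about is a.e.\ differentiability of $K$: absolute continuity into the nonseparable Banach space $\mathcal B(L^2(\Omega),\mathcal H)$ is not by itself enough, but your $K$ is absolutely continuous into the Hilbert space of Hilbert--Schmidt operators (indeed $\|K(t)-K(s)\|_{\mathrm{HS}}=\|u_S(t)-u_S(s)\|_{L^2(\Omega;\mathcal H)}$), which has the Radon--Nikodym property, so $\dot K$, and hence $\dot A$ and $\dot Q$, exist a.e.\ and are $L^1$.
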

To show Proposition~\ref{prop:smooth-SVD}, we will use an argument similar to what we will see in Section~\ref{sec:Existence-and-Regularity} below. Thus, we will defer the proof to Section~\ref{sec:Existence-and-Regularity}.

Parametrisation of $M_{S}$ is determined by parameters up to a unique orthogonal
matrix.
\begin{lemma}
	\label{lem:up-to-O(S)}Let $v_{S}\in M_{S}$ be given. Suppose that
	$v_{S}$ admits two representations $v_{S}=\boldsymbol{V}^{\top}\boldsymbol{W}=\tilde{\boldsymbol{V}}^{\top}\tilde{\boldsymbol{W}}\in M_{S}$
	with some $(\boldsymbol{V},\boldsymbol{W}),\ (\tilde{\boldsymbol{V}},\tilde{\boldsymbol{W}})\in[H]^{S}\times[L^{2}(\Omega)]^{S}$
	satisfying the linear independence and orthonormality {conditions}
	as in \eqref{eq:def-MS}. Then, we have
	\[
	(\tilde{\boldsymbol{V}},\tilde{\boldsymbol{W}})=(\Theta{}^{\top}\boldsymbol{V},\Theta{}^{\top}\boldsymbol{W}),
	\]
	for a unique $\Theta\in O(S)$. 
\end{lemma}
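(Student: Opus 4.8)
The plan is to attach to $v_{S}$ a representation‑independent object and read the claim off from it. The natural candidate is the finite–rank operator $K=K(v_{S})\colon L^{2}(\Omega)\to\mathcal H$, $w\mapsto\mathbb E[v_{S}w]$, already used in the proof of Lemma~\ref{lem:uS-SVD}. Writing $v_{S}=\boldsymbol V^{\top}\boldsymbol W=\sum_{j=1}^{S}V_{j}W_{j}$ gives $Kw=\sum_{j=1}^{S}\mathbb E[W_{j}w]\,V_{j}$, and since $\{V_{j}\}_{j=1}^{S}$ is linearly independent in $\mathcal H$ we get $Kw=0$ iff $\mathbb E[W_{j}w]=0$ for all $j$, i.e.\ iff $w\perp\mathrm{span}_{\mathbb R}\{W_{j}\}_{j=1}^{S}$. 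Hence $\mathrm{span}_{\mathbb R}\{W_{j}\}_{j=1}^{S}=(\ker K)^{\perp}$, which does not depend on the chosen representation of $v_{S}$. Applying the same argument to the second representation shows $\mathrm{span}_{\mathbb R}\{W_{j}\}_{j=1}^{S}=\mathrm{span}_{\mathbb R}\{\tilde W_{j}\}_{j=1}^{S}=:\mathcal W$, an $S$‑dimensional subspace of $L^{2}(\Omega)$.

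Next I would use that $\{W_{j}\}_{j=1}^{S}$ and $\{\tilde W_{j}\}_{j=1}^{S}$ are two orthonormal bases of the finite–dimensional inner‑product space $\mathcal W$: there is then a matrix $\Theta:=\mathbb E[\boldsymbol W\tilde{\boldsymbol W}^{\top}]\in\mathbb R^{S\times S}$ with $\tilde W_{k}=\sum_{j}\Theta_{jk}W_{j}$, that is $\tilde{\boldsymbol W}=\Theta^{\top}\boldsymbol W$, and $\Theta\in O(S)$ because $\Theta^{\top}\Theta=(\mathbb E[\tilde W_{k}\tilde W_{l}])_{k,l}=I$ by Parseval in $\mathcal W$ and orthonormality of $\{\tilde W_{j}\}$. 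Substituting $\tilde{\boldsymbol W}=\Theta^{\top}\boldsymbol W$ into $v_{S}=\tilde{\boldsymbol V}^{\top}\tilde{\boldsymbol W}=(\Theta\tilde{\boldsymbol V})^{\top}\boldsymbol W$ and subtracting $v_{S}=\boldsymbol V^{\top}\boldsymbol W$ yields $(\boldsymbol V-\Theta\tilde{\boldsymbol V})^{\top}\boldsymbol W=0$ in $L^{2}(\Omega;\mathcal H)$; pairing with $W_{k}$ and taking expectations (using orthonormality of $\{W_{j}\}$) gives $\boldsymbol V=\Theta\tilde{\boldsymbol V}$, hence $\tilde{\boldsymbol V}=\Theta^{\top}\boldsymbol V$ and $\tilde{\boldsymbol W}=\Theta^{\top}\boldsymbol W$ as claimed. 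Uniqueness of $\Theta$ follows from the linear independence of $\{W_{j}\}$: if $\Theta^{\top}\boldsymbol W=(\Theta')^{\top}\boldsymbol W$ then $(\Theta-\Theta')^{\top}\boldsymbol W=0$ forces $\Theta=\Theta'$.

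I expect no serious obstacle here; the only substantive point is the first step, where the linear independence of the deterministic factors is exactly what is needed to conclude that $\mathrm{span}_{\mathbb R}\{W_{j}\}_{j=1}^{S}=(\ker K)^{\perp}$ is intrinsic to $v_{S}$, and everything after that is finite–dimensional linear algebra. An alternative, fully elementary route avoids $K$: set $A:=\mathbb E[\boldsymbol W\tilde{\boldsymbol W}^{\top}]$, apply $\mathbb E[\,\cdot\,W_{k}]$ to $\boldsymbol V^{\top}\boldsymbol W=\tilde{\boldsymbol V}^{\top}\tilde{\boldsymbol W}$ to express $\boldsymbol V=A\tilde{\boldsymbol V}$, then apply $\mathbb E[\,\cdot\,\tilde W_{k}]$ to obtain the reverse relation, and deduce $A\in O(S)$ from the orthonormality of both stochastic families together with the linear independence of the deterministic ones; but reusing the operator $K$ from Lemma~\ref{lem:uS-SVD} is cleaner and also makes transparent that the relevant data are the subspaces $\mathrm{range}(K)$ and $(\ker K)^{\perp}$, consistent with the representation‑independence of $\mathscr P_{u_{S}}$ noted earlier.
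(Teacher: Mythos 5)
Your proof is correct, and it takes a genuinely different route from the paper's. The paper extracts $\Theta$ directly from the identity $\tilde{\boldsymbol{V}}^{\top}\tilde{\boldsymbol{W}}=\boldsymbol{V}^{\top}\boldsymbol{W}$ by pairing against the deterministic factor $\tilde{\boldsymbol{V}}$ in $\mathcal{H}$, giving the explicit formula $\Theta^{\top}=\big(\langle\tilde{\boldsymbol{V}},\tilde{\boldsymbol{V}}^{\top}\rangle\big)^{-1}\langle\tilde{\boldsymbol{V}},\boldsymbol{V}^{\top}\rangle$, and then obtains $\Theta^{\top}\Theta=I$ by comparing $\mathbb{E}[\tilde{\boldsymbol{W}}\tilde{\boldsymbol{W}}^{\top}]=I=\mathbb{E}[\boldsymbol{W}\boldsymbol{W}^{\top}]$; the matrix $\Theta$ thus arrives already equipped with the property $\tilde{\boldsymbol{W}}=\Theta^{\top}\boldsymbol{W}$, with orthogonality checked afterwards. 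You instead first prove that the subspace $\mathrm{span}\{W_{j}\}=\mathrm{span}\{\tilde{W}_{j}\}=(\ker K)^{\perp}$ is intrinsic to $v_{S}$, using the rank-$S$ operator $K$ from Lemma~\ref{lem:uS-SVD} and the linear independence of the deterministic factors, and then obtain $\Theta=\mathbb{E}[\boldsymbol{W}\tilde{\boldsymbol{W}}^{\top}]$ as the change of orthonormal bases within that fixed $S$-dimensional subspace, with orthogonality following from Parseval. Both are short and valid. Your version is conceptually illuminating — it makes explicit the invariant subspace responsible for the representation-independence of $\mathscr{P}_{u_{S}}$ noted in the paper — at the mild cost of invoking $K$ and an extra reduction step. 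The paper's version is more economical in that it never needs to identify the intrinsic subspace. You also explicitly carry through the deduction $\tilde{\boldsymbol{V}}=\Theta^{\top}\boldsymbol{V}$ from $(\boldsymbol{V}-\Theta\tilde{\boldsymbol{V}})^{\top}\boldsymbol{W}=0$, a step the paper leaves implicit; that is a small but welcome improvement in completeness.
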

\begin{proof}
	From $\tilde{\boldsymbol{V}}^{\top}\tilde{\boldsymbol{W}}=\boldsymbol{V}{}^{\top}\boldsymbol{W}$,
	we have
	\[
	\tilde{\boldsymbol{W}}=(\langle\tilde{\boldsymbol{V}},\tilde{\boldsymbol{V}}{}^{\top}\rangle)^{-1}\langle\tilde{\boldsymbol{V}},\boldsymbol{V}{}^{\top}\rangle\boldsymbol{W}=:\Theta{}^{\top}\boldsymbol{W},
	\]
	so that $\tilde{\boldsymbol{W}}\tilde{\boldsymbol{W}}{}^{\top}=\Theta{}^{\top}\boldsymbol{W}\boldsymbol{W}{}^{\top}\Theta$.
	From the $L^{2}(\Omega)$-orthonormality of $\tilde{\boldsymbol{W}}$
	and $\boldsymbol{W}$, taking the expectation of both sides we conclude
	that $\Theta$ is an orthogonal matrix. To see the uniqueness, suppose
	\[
	\tilde{\boldsymbol{W}}=\tilde{\Theta}^{\top}\boldsymbol{W}\text{ for some }\tilde{\Theta}\in\mathbb{R}^{S\times S}.
	\]
	But from $\Theta{}^{\top}\boldsymbol{W}=\tilde{\Theta}^{\top}\boldsymbol{W}$
	and $\mathbb{E}[\boldsymbol{W}\boldsymbol{W}{}^{\top}]=I$, we must have
	$\tilde{\Theta}=\Theta$. 
	\end{proof}
The above lemma implies the following corollary, which states that
if both a solution $u_{S}$ of the original problem \eqref{eq:original-eq-with-proj}
and a Dual DO solution $(\boldsymbol{U}(t),\boldsymbol{Y}(t))$ of {\eqref{eq:original-eq-with-proj}}
exist, and if further the solution of the original problem is unique,
then $(\boldsymbol{U}(t),\boldsymbol{Y}(t))$ is determined by $u_{S}$
up to a unique orthogonal matrix. {We stress that the following corollary does not guarantee the uniqueness of the Dual DO solution. }
\begin{corollary}
	\label{cor:dual-DO-form} Suppose that the equation \eqref{eq:original-eq-with-proj}
	has a unique strong solution $u_{S}(t)\in M_{S}$, $t\in[0,T]$.
	{Let $(\boldsymbol{V}(t),\boldsymbol{W}(t))\in[H]^{S}\times[L^{2}(\Omega)]^{S}$ be any representation of $u_S(t)$, namely $u_{S}(t)=\boldsymbol{V}(t)^{\top}\boldsymbol{W}(t)$, }
	satisfying the linear independence and orthonormality conditions
	defined in \eqref{eq:def-MS}. 
	Furthermore, suppose that a Dual
	DO solution $(\boldsymbol{U}(t),\boldsymbol{Y}(t))$ exists in
	the strong sense. Then, we have
	\begin{equation}
	(\boldsymbol{U}(t),\boldsymbol{Y}(t))=(\Theta(t)^{\top}\boldsymbol{V}(t),\Theta(t)^{\top}\boldsymbol{W}(t)),\label{eq:dual-DO-form}
	\end{equation}
	for a unique $\Theta(t)\in O(S)$. In words, if a Dual DO solution
	$(\boldsymbol{U}(t),\boldsymbol{Y}(t))$ exists, then it must be of
	the form $(\Theta(t)^{\top}\boldsymbol{V}(t),\Theta(t)^{\top}\boldsymbol{W}(t))$
	with an arbitrarily chosen representation $\boldsymbol{V}(t)^{\top}\boldsymbol{W}(t)$
	of $u_{S}(t)$ and the corresponding unique orthogonal matrix $\Theta(t)$.
\end{corollary}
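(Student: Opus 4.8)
The plan is to \emph{lift} the given Dual DO solution to a curve in $M_S$, identify it with $u_S$ using the assumed uniqueness, and then extract $\Theta(t)$ from Lemma~\ref{lem:up-to-O(S)}. Set $\tilde{u}_S(t):=\boldsymbol U(t)^{\top}\boldsymbol Y(t)$. The principal step is to show that $\tilde{u}_S$ is a strong solution of \eqref{eq:original-eq-with-proj} with $\tilde{u}_S(0)=\boldsymbol U_0^{\top}\boldsymbol Y_0=u_{0S}$. Once this is known, uniqueness forces $\tilde{u}_S\equiv u_S$ on $[0,T]$, so for every $t$ the element $u_S(t)\in M_S$ carries the two representations $u_S(t)=\boldsymbol U(t)^{\top}\boldsymbol Y(t)=\boldsymbol V(t)^{\top}\boldsymbol W(t)$, both satisfying the linear independence and orthonormality conditions of \eqref{eq:def-MS}, and Lemma~\ref{lem:up-to-O(S)} produces a unique $\Theta(t)\in O(S)$ with $(\boldsymbol U(t),\boldsymbol Y(t))=(\Theta(t)^{\top}\boldsymbol V(t),\Theta(t)^{\top}\boldsymbol W(t))$, which is exactly the assertion.

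For the principal step I would differentiate $\tilde{u}_S=\sum_{j}U_jY_j$ using the Leibniz rule for absolutely continuous curves under the bilinear multiplication $\mathcal H\times L^2(\Omega)\to L^2(\Omega;\mathcal H)$ (legitimate because $\boldsymbol U$ and $\boldsymbol Y$ are absolutely continuous and bounded on the compact interval $[0,T]$) and substitute \eqref{eq:working-eq}. One then simplifies termwise: orthonormality of $\{Y_j(t)\}_{j}$ gives $\bigl(\mathbb{E}[F(\tilde{u}_S)\boldsymbol Y]\bigr)^{\top}\boldsymbol Y=P_{\boldsymbol Y}F(\tilde{u}_S)$; the fact that $\{(Z_{\boldsymbol U}^{-1}\boldsymbol U)_j\}_{j}$ is biorthogonal to $\{U_j\}_{j}$ gives $\boldsymbol U^{\top}\langle F(\tilde{u}_S),Z_{\boldsymbol U}^{-1}\boldsymbol U\rangle=P_{\boldsymbol U}F(\tilde{u}_S)$ and $\boldsymbol U^{\top}P_{\boldsymbol Y}\bigl(\langle F(\tilde{u}_S),Z_{\boldsymbol U}^{-1}\boldsymbol U\rangle\bigr)=P_{\boldsymbol Y}P_{\boldsymbol U}F(\tilde{u}_S)$; and, $\Lambda$ being deterministic and linear, $\Lambda(\boldsymbol U)^{\top}\boldsymbol Y=\Lambda\tilde{u}_S$ and $P_{\boldsymbol Y}\Lambda\tilde{u}_S=\Lambda\tilde{u}_S$ (from the identities already noted in Section~\ref{sec:setting}). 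Since $P_{\boldsymbol U}$ acts on the $\mathcal H$-variable and $P_{\boldsymbol Y}$ on the $\Omega$-variable, the two projections commute, whence $\mathscr P_{\tilde{u}_S}=P_{\boldsymbol U}+P_{\boldsymbol Y}-P_{\boldsymbol U}P_{\boldsymbol Y}$ and the terms assemble into
\[
\frac{\partial\tilde{u}_S}{\partial t}=\Lambda\tilde{u}_S+P_{\boldsymbol U}F(\tilde{u}_S)+P_{\boldsymbol Y}F(\tilde{u}_S)-P_{\boldsymbol U}P_{\boldsymbol Y}F(\tilde{u}_S)=\mathscr P_{\tilde{u}_S}\bigl(\Lambda\tilde{u}_S+F(\tilde{u}_S)\bigr)
\]
for almost every $t$, which is \eqref{eq:original-eq-with-proj} for $\tilde{u}_S$.

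It then remains to see that $\tilde{u}_S$ is a genuine strong solution, i.e.\ takes values in $M_S$ for \emph{every} $t\in[0,T]$, since a strong Dual DO solution (Definition~\ref{def:Dual-DO-strong}) carries the structural conditions only almost everywhere. Absolute continuity of $\tilde{u}_S$ is inherited from $\boldsymbol U,\boldsymbol Y$. As $t\mapsto\langle Y_j(t),Y_k(t)\rangle_{L^2(\Omega)}$ is continuous and equals $\delta_{jk}$ almost everywhere, $\{Y_j(t)\}_{j}$ is orthonormal for all $t$; at $t=0$ this and $\boldsymbol U_0^{\top}\boldsymbol Y_0=u_{0S}\in M_S$ force $\{U_j(0)\}_{j}$ to be linearly independent. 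With $D:=\{t\in[0,T]:\det Z_{\boldsymbol U(t)}>0\}$ — open, conull, containing $0$ — suppose $\tau:=\sup\{t:[0,t)\subseteq D\}<T$. On $[0,\tau)$ the curve $\tilde{u}_S$ is $M_S$-valued, absolutely continuous and solves \eqref{eq:original-eq-with-proj} almost everywhere, hence coincides with $u_S$ there by uniqueness; by continuity $\tilde{u}_S(\tau)=u_S(\tau)\in M_S$, which with orthonormality of $\{Y_j(\tau)\}_{j}$ forces $\tau\in D$, contradicting openness of $D$ and the definition of $\tau$. Hence $D=[0,T]$, $\tilde{u}_S$ is a strong solution, $\tilde{u}_S\equiv u_S$, and the conclusion follows as in the first paragraph.

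I expect the main obstacle to be the algebraic identity of the second paragraph — in particular, recovering the full tangent-space projection $\mathscr P_{\tilde{u}_S}$ from the $(I-P_{\boldsymbol Y})$-corrected $\boldsymbol Y$-equation in \eqref{eq:working-eq}, which hinges on the commutation $P_{\boldsymbol U}P_{\boldsymbol Y}=P_{\boldsymbol Y}P_{\boldsymbol U}$ — together with the routine but unavoidable propagation of the full-rank and orthonormality structure from ``almost every $t$'' to ``every $t\in[0,T]$'', without which neither ``$\tilde{u}_S$ is a strong solution'' nor the pointwise application of Lemma~\ref{lem:up-to-O(S)} would be legitimate.
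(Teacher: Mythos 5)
Your proof is correct and follows the same main line as the paper's: differentiate $\hat{u}_S:=\boldsymbol U^{\top}\boldsymbol Y$, substitute the Dual DO system \eqref{eq:working-eq}, recognize $P_{\boldsymbol Y}F$, $P_{\boldsymbol U}F$ and $P_{\boldsymbol Y}P_{\boldsymbol U}F$ via orthonormality and the biorthogonal system $Z_{\boldsymbol U}^{-1}\boldsymbol U$, assemble $\mathscr P_{\hat u_S}=P_{\boldsymbol U}+P_{\boldsymbol Y}-P_{\boldsymbol U}P_{\boldsymbol Y}$ (using the $\Lambda$-absorption and $P_{\boldsymbol U}P_{\boldsymbol Y}=P_{\boldsymbol Y}P_{\boldsymbol U}$), invoke uniqueness of the original problem, and finish with Lemma~\ref{lem:up-to-O(S)}. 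The algebraic identities you state coincide with what the paper computes.

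The one place where you go beyond the paper is the final paragraph, which addresses whether the structural conditions of Definition~\ref{def:Dual-DO-strong}\ — linear independence of $\boldsymbol U(t)$ and orthonormality of $\boldsymbol Y(t)$, stated there only for \emph{almost every} $t$ — actually hold for \emph{every} $t$. This matters because $\hat u_S$ must take values in $M_S$ at every $t$ to be a strong solution of \eqref{eq:original-eq-with-proj} in the sense of Definition~\ref{def:original-strong}, and because Lemma~\ref{lem:up-to-O(S)} must be applicable pointwise. For $\boldsymbol Y$ the upgrade is immediate by continuity of $t\mapsto\mathbb E[Y_j Y_k]$; for $\boldsymbol U$ you use a bootstrap: on a maximal half-open interval where $\det Z_{\boldsymbol U}>0$, $\hat u_S\equiv u_S$ by uniqueness, hence $\hat u_S(\tau)=u_S(\tau)\in M_S$ at the endpoint, and orthonormality of $\boldsymbol Y(\tau)$ forces $\boldsymbol U(\tau)$ to be independent, so the interval is all of $[0,T]$. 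The paper's printed proof does not spell out this continuation argument; your step is a genuine tightening. The only small caveat is that the corollary's hypothesis is stated as uniqueness on $[0,T]$, whereas your bootstrap uses uniqueness on shorter intervals $[0,\tau')$; this is the usual (and harmless) tacit strengthening to uniqueness on subintervals, which the paper also relies on elsewhere (cf.\ its definition of strong solutions on $[0,T)$).
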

\begin{proof}
	{We first show that the function $\hat{u}_{S}:=\boldsymbol{U}(t)^{\top}\boldsymbol{Y}(t)\in M_{S}$
		satisfies the original equation \eqref{eq:original-eq-with-proj}.
		Since $(\boldsymbol{U}(t),\boldsymbol{Y}(t))$ is a Dual DO solution
		in the strong sense, from \eqref{eq:working-eq} a.e.\ on $[0,T]$
		we have
		\begin{align*}
		\frac{\mathrm{d}}{\mathrm{d}t}\hat{u}_{S} & =\frac{\mathrm{d}}{\mathrm{d}t}\boldsymbol{U}^{\top}\boldsymbol{Y}+\boldsymbol{U}^{\top}\frac{\mathrm{d}}{\mathrm{d}t}\boldsymbol{Y}\\
		& =\Lambda(\hat{u}_{S})+\boldsymbol{Y}^{\top}\mathbb{E}\left[F(\hat{u}_{S})\boldsymbol{Y}\right]+(I-P_{\boldsymbol{Y}})(\boldsymbol{U}^{\top}Z_{\boldsymbol{U}}^{-1}\langle F(\hat{u}_{S}),\boldsymbol{U}\rangle)\\
		& =\Lambda(\hat{u}_{S})+P_{\boldsymbol{Y}}(F(\hat{u}_{S}))+(I-P_{\boldsymbol{Y}})P_{\boldsymbol{U}}(F(\hat{u}_{S}))\in L^{2}(\Omega;\mathcal{H}).
		\end{align*}
		Now, notice that $P_{\boldsymbol{Y}}\Lambda(\hat{u}_{S})=\Lambda(\hat{u}_{S})$
		and thus $(P_{\boldsymbol{U}}-P_{\boldsymbol{U}}P_{\boldsymbol{Y}})\Lambda(\hat{u}_{S})=0$.
		Together with $P_{\boldsymbol{U}}P_{\boldsymbol{Y}}=P_{\boldsymbol{Y}}P_{\boldsymbol{U}}$
		we obtain
		\[
		\frac{\mathrm{d}}{\mathrm{d}t}\hat{u}_{S}=(P_{\boldsymbol{Y}}+(P_{\boldsymbol{U}}-P_{\boldsymbol{U}}P_{\boldsymbol{Y}}))\Lambda(\hat{u}_{S})+(P_{\boldsymbol{Y}}+P_{\boldsymbol{U}}-P_{\boldsymbol{U}}P_{\boldsymbol{Y}})F(\hat{u}_{S}),
		\]
		which is \eqref{eq:original-eq-with-proj}.}
	
	Then, from the uniqueness of the solution of the original problem
	we have $\boldsymbol{V}(t)^{\top}\boldsymbol{W}(t)=u_{S}=\boldsymbol{U}(t)^{\top}\boldsymbol{Y}(t)$.
	Thus, Lemma~\ref{lem:up-to-O(S)} implies \eqref{eq:dual-DO-form}, as claimed.
	\end{proof}
In the above corollary, we assumed the existence of both the solution
of the original problem and the Dual DO {formulation}, and deduced
the existence of a unique orthogonal matrix. The following lemma shows
that such an orthogonal matrix exists, showing that the unique existence
of the solution of the original problem {\eqref{eq:original-eq-with-proj}}
implies that of the Dual DO formulation as in Definitions~\ref{def:Dual-DO-strong}--\ref{def:Dual-DO-classical}.
The proof is inspired by \cite[Proof of Proposition II.3.1]{Kobayashi.S_Nomizu_1996_I}.
We will use the following lemma to show the equivalence of the original
problem \eqref{eq:original-eq-with-proj} and the Dual DO formulation
\eqref{eq:working-eq}, see Proposition~\ref{prop:existence-equiv}
below.
\begin{lemma}
	\label{lem:unique-lift}
	Suppose that $[0,T]\ni t\mapsto u_{S}(t)\in M_{S}\subset L^{2}(\Omega;\mathcal{H})$
	is absolutely continuous, $u_S(0)=u_{0S}\in M_S$, and satisfies the equation \eqref{eq:original-eq-with-proj} a.e.\ on $[0,T]$. 
	Let $(\boldsymbol{V}(0),\boldsymbol{W}(0))\in[\mathcal{H}]^{S}\times[L^{2}(\Omega)]^{S}$ be such that $\boldsymbol{V}(0)^\top \boldsymbol{W}(0)=u_{0S}$.  
	Then, there exists a Dual DO solution $(\boldsymbol{U}(t),\boldsymbol{Y}(t))\in[\mathcal{H}]^{S}\times[L^{2}(\Omega)]^{S}$
	in the strong sense with the initial condition $(\boldsymbol{V}(0),\boldsymbol{W}(0))\in[\mathcal{H}]^{S}\times[L^{2}(\Omega)]^{S}$.
	Further, $(\boldsymbol{U}(t),\boldsymbol{Y}(t))$ is the unique Dual
	DO solution such that $u_{S}(t)=\boldsymbol{U}(t)^{\top}\boldsymbol{Y}(t)$
	for all $t\in[0,T]$.
\end{lemma}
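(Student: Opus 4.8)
The plan is to \emph{lift} the given curve $t\mapsto u_S(t)\in M_S$ to a curve in $[\mathcal{H}]^S\times[L^2(\Omega)]^S$ by first choosing \emph{any} absolutely continuous parametrisation and then fixing the gauge by an orthogonal change of frame. By Proposition~\ref{prop:smooth-SVD} there is an absolutely continuous curve $t\mapsto(\boldsymbol{V}(t),\boldsymbol{W}(t))$ with $u_S(t)=\boldsymbol{V}(t)^\top\boldsymbol{W}(t)$, with $\{V_j(t)\}$ linearly independent and $\{W_j(t)\}$ orthonormal for every $t\in[0,T]$. Since $u_{0S}=\boldsymbol{V}(0)^\top\boldsymbol{W}(0)$ is another representation of the same element (we may assume $(\boldsymbol{V}(0),\boldsymbol{W}(0))$ satisfies the independence and orthonormality conditions of \eqref{eq:def-MS}, as otherwise no Dual DO solution has this initial value and there is nothing to prove), Lemma~\ref{lem:up-to-O(S)} lets us replace $(\boldsymbol{V},\boldsymbol{W})$ by $(\Theta_0^\top\boldsymbol{V},\Theta_0^\top\boldsymbol{W})$ for a fixed $\Theta_0\in O(S)$, so that from now on $(\boldsymbol{V}(0),\boldsymbol{W}(0))$ equals the prescribed initial datum. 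We then look for an absolutely continuous $t\mapsto\Theta(t)\in O(S)$ with $\Theta(0)=I$ such that $(\boldsymbol{U},\boldsymbol{Y}):=(\Theta^\top\boldsymbol{V},\Theta^\top\boldsymbol{W})$ satisfies the gauge condition $\mathbb{E}[\dot{\boldsymbol{Y}}\boldsymbol{Y}^\top]=0$; note that then $\boldsymbol{U}^\top\boldsymbol{Y}=\boldsymbol{V}^\top\boldsymbol{W}=u_S$ automatically, $\{U_j(t)\}$ stays linearly independent, and $\{Y_j(t)\}$ stays orthonormal because $\Theta(t)\in O(S)$.

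Expanding $\dot{\boldsymbol{Y}}=\dot{\Theta}^\top\boldsymbol{W}+\Theta^\top\dot{\boldsymbol{W}}$ and using $\mathbb{E}[\boldsymbol{W}\boldsymbol{W}^\top]=I$, the gauge condition is equivalent, as long as $\Theta(t)$ is invertible, to the linear matrix ODE $\dot{\Theta}(t)=A(t)\Theta(t)$ with $A(t):=-\mathbb{E}[\boldsymbol{W}(t)\dot{\boldsymbol{W}}(t)^\top]$. Since $\boldsymbol{W}$ is absolutely continuous and $\|W_j(t)\|_{L^2(\Omega)}=1$, we have $A\in L^1(0,T;\mathbb{R}^{S\times S})$, so the Carath\'eodory theory yields a unique absolutely continuous $\Theta$ with $\Theta(0)=I$. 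To see that $\Theta(t)\in O(S)$ for all $t$, compute $\frac{\mathrm{d}}{\mathrm{d}t}(\Theta^\top\Theta)=\Theta^\top(A+A^\top)\Theta$ and observe that $A+A^\top=-\frac{\mathrm{d}}{\mathrm{d}t}\mathbb{E}[\boldsymbol{W}\boldsymbol{W}^\top]=0$ a.e., because $\mathbb{E}[\boldsymbol{W}(t)\boldsymbol{W}(t)^\top]=I$ identically; hence $\Theta^\top\Theta\equiv I$, and then $\Theta$ does satisfy the gauge identity by construction.

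It remains to verify that $(\boldsymbol{U},\boldsymbol{Y})$ solves \eqref{eq:working-eq} a.e. By the product rule for absolutely continuous Hilbert-space-valued curves, $\dot{u}_S=\dot{\boldsymbol{U}}^\top\boldsymbol{Y}+\boldsymbol{U}^\top\dot{\boldsymbol{Y}}$ a.e.; applying $\mathbb{E}[\,\cdot\;\boldsymbol{Y}]$ and $\langle\,\cdot\,,\boldsymbol{U}\rangle$ to this identity and using orthonormality of $\boldsymbol{Y}$, the gauge condition, and invertibility of $Z_{\boldsymbol{U}}$, one recovers $\dot{\boldsymbol{U}}=\mathbb{E}[\dot{u}_S\boldsymbol{Y}]$ and $Z_{\boldsymbol{U}}\dot{\boldsymbol{Y}}=\langle\dot{u}_S-\dot{\boldsymbol{U}}^\top\boldsymbol{Y},\boldsymbol{U}\rangle$. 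Now substitute $\dot{u}_S=\mathscr{P}_{u_S}(\Lambda u_S+F(u_S))$, valid a.e.\ by \eqref{eq:original-eq-with-proj}, use $\mathscr{P}_{u_S}=P_{\boldsymbol{U}}+P_{\boldsymbol{Y}}-P_{\boldsymbol{U}}P_{\boldsymbol{Y}}$ together with the elementary identities $\mathbb{E}[(P_{\boldsymbol{Y}}g)\boldsymbol{Y}]=\mathbb{E}[g\boldsymbol{Y}]$, $\mathbb{E}[(P_{\boldsymbol{U}}g)\boldsymbol{Y}]=P_{\boldsymbol{U}}\mathbb{E}[g\boldsymbol{Y}]$, $\langle P_{\boldsymbol{U}}g,\boldsymbol{U}\rangle=\langle g,\boldsymbol{U}\rangle$, $\langle P_{\boldsymbol{Y}}g,\boldsymbol{U}\rangle=P_{\boldsymbol{Y}}\langle g,\boldsymbol{U}\rangle$ and $(\mathbb{E}[g\boldsymbol{Y}])^\top\boldsymbol{Y}=P_{\boldsymbol{Y}}g$, to get $\mathbb{E}[\dot{u}_S\boldsymbol{Y}]=\mathbb{E}[(\Lambda u_S+F(u_S))\boldsymbol{Y}]$ and $\langle\dot{u}_S,\boldsymbol{U}\rangle=\langle\Lambda u_S+F(u_S),\boldsymbol{U}\rangle$; after the deterministic-linear simplifications of $\Lambda$ already recorded before \eqref{eq:working-eq} one lands exactly on the two equations in \eqref{eq:working-eq}. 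Here one uses $\Lambda u_S=(\Lambda\boldsymbol{U})^\top\boldsymbol{Y}$ and $U_j(t)\in D_{\mathcal{H}}(\Lambda)$ for a.e.\ $t$, which follow from $u_S(t)\in D(\Lambda)$ and closedness of $\Lambda$ applied pointwise in $\Omega$; this is the one spot where the unbounded operator needs a little care. Combining this with the preceding paragraph, all items of Definition~\ref{def:Dual-DO-strong} hold, so $(\boldsymbol{U},\boldsymbol{Y})$ is a Dual DO solution in the strong sense with initial value $(\boldsymbol{V}(0),\boldsymbol{W}(0))$ and $u_S=\boldsymbol{U}^\top\boldsymbol{Y}$.

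Finally, for uniqueness let $(\tilde{\boldsymbol{U}},\tilde{\boldsymbol{Y}})$ be any Dual DO solution in the strong sense with the same initial datum and $\tilde{\boldsymbol{U}}^\top\tilde{\boldsymbol{Y}}=u_S$ on $[0,T]$. Lemma~\ref{lem:up-to-O(S)} applied at each $t$ gives $\tilde{\boldsymbol{Y}}=\tilde{\Theta}^\top\boldsymbol{W}$ with $\tilde{\Theta}(t)\in O(S)$ unique, and $\tilde{\Theta}=\mathbb{E}[\boldsymbol{W}\tilde{\boldsymbol{Y}}^\top]$ is absolutely continuous with $\tilde{\Theta}(0)=I$; the gauge identity \eqref{eq:implied-gauge}, which holds for any Dual DO solution, then forces $\dot{\tilde{\Theta}}=A\tilde{\Theta}$ a.e., so $\tilde{\Theta}=\Theta$ by uniqueness for the linear ODE, whence $(\tilde{\boldsymbol{U}},\tilde{\boldsymbol{Y}})=(\boldsymbol{U},\boldsymbol{Y})$. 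The main obstacle is the content of the second paragraph: pinning down the correct evolution $\dot{\Theta}=A\Theta$ of the moving frame and checking that $A$ is skew-symmetric, so that $\Theta$ remains orthogonal and the gauge condition is consistent. The rest is bookkeeping with the projections $P_{\boldsymbol{U}},P_{\boldsymbol{Y}}$ and standard facts about absolutely continuous curves in Hilbert spaces.
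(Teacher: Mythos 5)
Your proof follows essentially the same route as the paper's: lift $u_S$ to an absolutely continuous parametrisation via Proposition~\ref{prop:smooth-SVD}, align the initial frame using Lemma~\ref{lem:up-to-O(S)}, and then obtain the Dual DO solution by solving the linear ODE $\dot{\Theta}=-\mathbb{E}[\boldsymbol{W}\dot{\boldsymbol{W}}^\top]\Theta$, using the skew-symmetry of $\mathbb{E}[\boldsymbol{W}\dot{\boldsymbol{W}}^\top]$ to keep $\Theta\in O(S)$. The only expository differences are that you carry out the verification that $(\boldsymbol{U},\boldsymbol{Y})$ satisfies \eqref{eq:working-eq} explicitly via the projection identities (the paper defers this to ``the derivation of the Dual DO equation''), and you invoke Lemma~\ref{lem:up-to-O(S)} directly in the uniqueness step rather than routing through Corollary~\ref{cor:dual-DO-form}; both are correct and, if anything, slightly cleaner.
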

\begin{proof}
	From Proposition~\ref{prop:smooth-SVD}, there exists a curve $t\mapsto {(\tilde{\boldsymbol{V}}(t),\tilde{\boldsymbol{W}}(t))}\in[\mathcal{H}]^{S}\times[L^{2}(\Omega)]^{S}$
	such that $u_{S}(t)={\tilde{\boldsymbol{V}}(t)^{\top}\tilde{\boldsymbol{W}}(t)}$ for all $t\in[0,T]$; 
	$\{{\tilde{V}_{j}}\}_{j=1}^{S}$ is linear independent in $\mathcal{H}$;
	$\{{\tilde{W}_{j}}\}_{j=1}^{S}$ is orthonormal in $L^{2}(\Omega)$;  $t\mapsto{\tilde{\boldsymbol{V}}}(t)\in[\mathcal{H}]^{S}$ is absolutely continuous on $[0,T]$; 
	and 
	$t\mapsto{\tilde{\boldsymbol{W}}}(t)\in L^{2}(\Omega)$ is absolutely continuous on $[0,T]$.
	{In general, $\tilde{\boldsymbol{V}}(0)\neq\boldsymbol{V}(0)$ and $\tilde{\boldsymbol{W}}(0)\neq\boldsymbol{W}(0)$, but from Lemma~\ref{lem:up-to-O(S)}, one can find a unique orthogonal matrix ${\Xi}$ such that 
		\[
		\Xi\tilde{\boldsymbol{V}}(0)=\boldsymbol{V}(0)\ \text{ and }\ 
		\Xi\tilde{\boldsymbol{W}}(0)=\boldsymbol{W}(0).
		\]
		Now, let  $\Xi\tilde{\boldsymbol{V}}(t):=\boldsymbol{V}(t)$ and 
		$\Xi\tilde{\boldsymbol{W}}(t):=\boldsymbol{W}(t)$, so that $u_S(t)=\boldsymbol{V}^\top(t)\boldsymbol{W}(t)$. Notice that $t\mapsto\boldsymbol{V}$ and $t\mapsto\boldsymbol{W}$ are absolutely continuous. 
	}
	From Corollary~\ref{cor:dual-DO-form}, if the Dual DO solution
	$(\boldsymbol{U}(t),\boldsymbol{Y}(t))$ exists then we necessarily
	have
	\begin{equation}
	(\boldsymbol{U}(t),\boldsymbol{Y}(t))=(\Theta(t)^{\top}\boldsymbol{V}(t),\Theta(t)^{\top}\boldsymbol{W}(t)),\ \text{for some unique }\Theta(t)\in O(S).\label{eq:DO-form-in-pf}
	\end{equation}
	We show that such $\Theta(t)$, i.e.\ an orthogonal matrix $\Theta(t)$
	for which the pair $(\Theta(t)^{\top}\boldsymbol{V}(t),\Theta(t)^{\top}\boldsymbol{W}(t))$
	is a Dual DO solution, uniquely exists. Note that again from
	Corollary~\ref{cor:dual-DO-form}, it suffices to consider an arbitrarily
	fixed representation $(\boldsymbol{V}(t),\boldsymbol{W}(t))$.
	
	We will obtain $\Theta$ as a solution of an ordinary differential
	equation we will now derive. If $(\boldsymbol{U}(t),\boldsymbol{Y}(t))$
	is a Dual DO solution, then the equality \eqref{eq:DO-form-in-pf}
	implies
	\[
	(\dot{\boldsymbol{U}}(t),\dot{\boldsymbol{Y}}(t))=\Big(\frac{\mathrm{d}}{\mathrm{d}t}\big(\Theta(t)^{\top}\boldsymbol{V}(t)\big),\dot{\Theta}(t)^{\top}\boldsymbol{W}(t)+\Theta(t)^{\top}\dot{\boldsymbol{W}}(t)\Big),
	\]
	and from \eqref{eq:implied-gauge} we must have
	\begin{align*}
	0=\mathbb{E}[\boldsymbol{Y}(t)\dot{\boldsymbol{Y}}(t)^{\top}] & =\mathbb{E}\big[\Theta(t)^{\top}\boldsymbol{W}(t)\big(\dot{\Theta}(t)^{\top}\boldsymbol{W}(t)+\Theta(t)^{\top}\dot{\boldsymbol{W}}(t)\big)^{\top}\big]\\
	& =\Theta(t)^{\top}\mathbb{E}[\boldsymbol{W}(t)\boldsymbol{W}(t)^{\top}]\dot{\Theta}(t)+\Theta(t)^{\top}\mathbb{E}[\boldsymbol{W}(t)\dot{\boldsymbol{W}}(t)^{\top}]\Theta(t)\\
	& =\Theta(t)^{\top}\big(\dot{\Theta}(t)+\mathbb{E}[\boldsymbol{W}(t)\dot{\boldsymbol{W}}(t)^{\top}]\Theta(t)\big),
	\end{align*}
	where in the last line we used $\mathbb{E}[\boldsymbol{W}(t)\boldsymbol{W}(t)^{\top}]=I$.
	Using the orthonormality of $\Theta$ yields the equation
	\begin{equation}
	\dot{\Theta}(t)=-\mathbb{E}[\boldsymbol{W}(t)\dot{\boldsymbol{W}}(t)^{\top}]\Theta(t),\quad t\in(0,T)\ \text{ with }\Theta(0)=I.\label{eq:theta-eq}
	\end{equation}
	Now, from the assumptions we have
	\begin{equation}
	\int_{0}^{T}\!\|\mathbb{E}[\boldsymbol{W}(t)\dot{\boldsymbol{W}}(t)^{\top}]\|_{\mathrm{F}}\mathrm{d}t\leq
	\!\sup_{s\in[0,T]}\|\boldsymbol{W}(s)\|_{[L^{2}(\Omega)]^{S}}\int_{0}^{T}\|\dot{\boldsymbol{W}}(t)\|_{[L^{2}(\Omega)]^{S}}\mathrm{d}t<\infty,
	\label{eq:WW'integrable}
	\end{equation}
	where $\|\cdot\|_{\mathrm{F}}$ denotes the Frobenius norm, and thus
	$-\mathbb{E}[\boldsymbol{W}(\cdot)\dot{\boldsymbol{W}}(\cdot)^{\top}]\in\mathbb{R}^{S\times S}$  
	is integrable on $(0,T)$. Thus, from a standard fixed-point argument we obtain that a solution $\Theta\in C([0,T];\mathbb{R}^{S\times S})$
	of the integral equation $\Theta(t)=I-\int_{0}^{t}\mathbb{E}[\boldsymbol{W}(s)\dot{\boldsymbol{W}}(s)^{\top}]\Theta(s)\mathrm{d}s$,
	$t\in[0,T]$ uniquely exists in $C([0,T];\mathbb{R}^{S\times S})$. The solution $\Theta$
	thus obtained is absolutely continuous {on $[0,T]$},
	and satisfies \eqref{eq:theta-eq} a.e.\ on $(0,T)$ \cite[Theorem 1.17]{Miyadera.I_1992_book}.
	%
	The solution $\Theta$ thus obtained satisfies $\Theta(t)\in O(S)$ for all $t\in[0,T]$: we have a.e.\ on $[0,T]$
	\begin{align*}
	\frac{\mathrm{d}}{\mathrm{d}t}(\Theta^{\top}\Theta) & =-\Theta(t)^{\top}(\mathbb{E}[\boldsymbol{W}(t)\dot{\boldsymbol{W}}(t)^{\top}])^{\top}\Theta(t)-\Theta(t)^{\top}\mathbb{E}[\boldsymbol{W}(t)\dot{\boldsymbol{W}}(t)^{\top}]\Theta(t)\\
	& =\Theta(t)^{\top}\mathbb{E}[\boldsymbol{W}(t)\dot{\boldsymbol{W}}(t)^{\top}]\Theta(t)-\Theta(t)^{\top}\mathbb{E}[\boldsymbol{W}(t)\dot{\boldsymbol{W}}(t)^{{T}}]\Theta(t)=0,
	\end{align*}
	where in the penultimate equality we used $\mathbb{E}[\dot{\boldsymbol{W}}(t)\boldsymbol{W}(t)^{\top}]+\mathbb{E}[\boldsymbol{W}(t)\dot{\boldsymbol{W}}(t)^{\top}]=0$. 
	Thus, the absolute continuity of $t\mapsto \Theta(t)^{\top}\Theta(t)$ implies that  $\Theta^{\top}\Theta$ is constant on $[0,T]$, but from the initial condition we have $\Theta(t)^{\top}\Theta(t)=I$ for all $t\in[0,T]$. 
	With this solution $\Theta(t)\in O(S)$ of \eqref{eq:theta-eq}, let
	\begin{equation}
	\boldsymbol{U}(t):=\Theta(t)^{\top}\boldsymbol{V}(t),\ \text{and }\ \boldsymbol{Y}(t):=\Theta(t)^{\top}\boldsymbol{W}(t).
	\label{eq:def-UY}
	\end{equation}
	We claim that $(\boldsymbol{U}(t),\boldsymbol{Y}(t))$ is a Dual DO
	solution. First, we note that $\boldsymbol{U}$ is linearly independent,
	and that $\boldsymbol{Y}$ is orthonormal and satisfies the gauge
	condition. Indeed, we have $\mathrm{det}(\langle\boldsymbol{U}(t),\boldsymbol{U}(t)^{\top}\rangle)\neq0$,
	$\mathbb{E}[\boldsymbol{Y}(t)\boldsymbol{Y}(t)^{\top}]=I$, and further,
	\begin{align*}
	\mathbb{E}[\boldsymbol{Y}(t)\dot{\boldsymbol{Y}}(t)^{\top}] & =\mathbb{E}[\Theta(t)^{\top}\boldsymbol{W}(t)(\dot{\Theta}^{\top}(t)\boldsymbol{W}(t)+\Theta(t)^{\top}\dot{\boldsymbol{W}}(t))^{\top}]\\
	& =\Theta(t)^{\top}\mathbb{E}[\boldsymbol{W}(t)\boldsymbol{W}(t)^{\top}]\dot{\Theta}(t)+\Theta(t)^{\top}\mathbb{E}[\boldsymbol{W}(t)\dot{\boldsymbol{W}}(t)^{\top}]\Theta(t)\\
	& =\Theta(t)^{\top}\big(\dot{\Theta}(t)+\mathbb{E}[\boldsymbol{W}(t)\dot{\boldsymbol{W}}(t)^{\top}]\Theta(t)\big)=0,
	\end{align*}
	where in the penultimate line we used $\mathbb{E}[\boldsymbol{W}(t)\boldsymbol{W}(t)^{\top}]=I$.
	Then, noting that $\boldsymbol{U}(t)^{\top}\boldsymbol{Y}(t)=\boldsymbol{V}(t)^{\top}\boldsymbol{W}(t)=u_{S}(t)$
	satisfies the original equation \eqref{eq:original-eq-with-proj},
	{from the derivation of the Dual DO equation \eqref{eq:working-eq}
		(see \cite{Musharbash.E_Nobile_2018_Dual}, also \cite{Musharbash.E_etal_2015_SISC,Sapsis.T_Lermusiaux_2009_DO})}
	we conclude that $(\boldsymbol{U}(t),\boldsymbol{Y}(t))$ satisfies
	\eqref{eq:working-eq}.
	From \eqref{eq:def-UY}, we see that on the compact interval $[0,T]$ the functions $t\mapsto\boldsymbol{U}(t)\in [\mathcal{H}]^S$ and $t\mapsto\boldsymbol{Y}(t)\in [L^2(\Omega)]^S$ are absolutely continuous, and thus $(\boldsymbol{U}(t),\boldsymbol{Y}(t))$
	is a Dual DO strong solution.
	
	To see the uniqueness of the Dual DO solution, we note
	that if $(\hat{\boldsymbol{U}}(t),\hat{\boldsymbol{Y}}(t))$ is another 
	Dual DO solution, then from Corollary~\ref{cor:dual-DO-form}
	we must have \[(\hat{\boldsymbol{U}}(t),\hat{\boldsymbol{Y}}(t))=(\hat{\Theta}(t)^{\top}\boldsymbol{V}(t),\hat{\Theta}(t)^{\top}\boldsymbol{W}(t))\quad 
	\text{ for a unique }\ \hat{\Theta}(t)\in O(S).
	\]
	But following the same argument
	as above, $\hat{\Theta}(t)$ must be a solution of \eqref{eq:theta-eq},
	which is unique. Thus, $(\hat{\boldsymbol{U}},\hat{\boldsymbol{Y}})=(\Theta^{\top}\boldsymbol{V},\Theta^{\top}\boldsymbol{W})=(\boldsymbol{U},\boldsymbol{Y})$.
	\end{proof}
We are ready to state the following equivalence of the original problem
\eqref{eq:original-eq-with-proj} and the Dual DO formulation (Definitions~\ref{def:Dual-DO-strong}--\ref{def:Dual-DO-classical}).
\begin{proposition}
	\label{prop:existence-equiv}
	\begin{sloppypar}
		Suppose that the solution $u_{S}$
		of the original equation \eqref{eq:original-eq-with-proj} uniquely
		exists in the strong sense (resp.~the classical sense). 
		Then, given the decomposition $(\boldsymbol{U}_{0},\boldsymbol{Y}_{0})\in[\mathcal{H}]^{S}\times[L^{2}(\Omega)]^{S}$
		of the initial condition ${u_{0S}}=\boldsymbol{U}_{0}^{\top}\boldsymbol{Y}_{0}\in M_{S}$,
		the Dual DO solution with the initial condition $(\boldsymbol{U}_{0},\boldsymbol{Y}_{0})$
		uniquely exists in the strong sense (resp.~the classical sense).
		Conversely, the unique existence of the Dual DO approximation in the
		strong sense (resp.~the classical sense) implies the unique existence
		of the solution of the original equation \eqref{eq:original-eq-with-proj}.
	\end{sloppypar}
\end{proposition}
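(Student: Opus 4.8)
The plan is to prove the two implications separately, in each case treating the strong and the classical notions of solution in parallel. The bridge from the original equation \eqref{eq:original-eq-with-proj} to the Dual DO system \eqref{eq:working-eq} is Lemma~\ref{lem:unique-lift} (for classical solutions, the same statement with ``strong'' replaced by ``classical'', which is proved by the same argument once the classical part of Proposition~\ref{prop:smooth-SVD} is used to produce the smooth representation); the bridge back is the computation carried out in the proof of Corollary~\ref{cor:dual-DO-form}, which shows that whenever $(\boldsymbol{U},\boldsymbol{Y})$ solves \eqref{eq:working-eq} then $\boldsymbol{U}^\top\boldsymbol{Y}$ solves \eqref{eq:original-eq-with-proj}.

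First, I would assume $u_S$ is the unique strong solution of \eqref{eq:original-eq-with-proj}. Since $u_S$ is absolutely continuous and $u_S(0)=u_{0S}=\boldsymbol{U}_0^\top\boldsymbol{Y}_0$, Lemma~\ref{lem:unique-lift} applied with $(\boldsymbol{V}(0),\boldsymbol{W}(0))=(\boldsymbol{U}_0,\boldsymbol{Y}_0)$ yields a strong Dual DO solution $(\boldsymbol{U},\boldsymbol{Y})$ with initial condition $(\boldsymbol{U}_0,\boldsymbol{Y}_0)$ and $u_S=\boldsymbol{U}^\top\boldsymbol{Y}$, which gives existence. For uniqueness, if $(\hat{\boldsymbol{U}},\hat{\boldsymbol{Y}})$ is another strong Dual DO solution with the same initial condition, then $\hat u_S:=\hat{\boldsymbol{U}}^\top\hat{\boldsymbol{Y}}$ solves \eqref{eq:original-eq-with-proj} with $\hat u_S(0)=u_{0S}$, so $\hat u_S=u_S$ by uniqueness of the original solution, and then the uniqueness clause of Lemma~\ref{lem:unique-lift}, which pins down the lift once the initial decomposition is fixed, forces $(\hat{\boldsymbol{U}},\hat{\boldsymbol{Y}})=(\boldsymbol{U},\boldsymbol{Y})$. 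In the classical case the argument is the same: Proposition~\ref{prop:smooth-SVD} provides a representation $u_S=\boldsymbol{V}^\top\boldsymbol{W}$ that is absolutely continuous on $[0,T]$ and $C^1$ on $(0,T]$; the rotation $\Theta$ solving \eqref{eq:theta-eq} is then $C^1$ on $(0,T]$ because $t\mapsto\mathbb{E}[\boldsymbol{W}(t)\dot{\boldsymbol{W}}(t)^\top]$ is continuous there, hence $(\boldsymbol{U},\boldsymbol{Y})=(\Theta^\top\boldsymbol{V},\Theta^\top\boldsymbol{W})$ is $C^1$ on $(0,T]$; and $U_j(t)\in D_{\mathcal H}(\Lambda)$ follows from $u_S(t)\in D(\Lambda)$ together with the identity $V_j(t)=\mathbb{E}[u_S(t)W_j(t)]$ and the fact that a closed operator commutes with Bochner integration, so that $(\boldsymbol{U},\boldsymbol{Y})$ is a classical Dual DO solution.

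For the converse, I would assume the Dual DO solution $(\boldsymbol{U},\boldsymbol{Y})$ with initial data $(\boldsymbol{U}_0,\boldsymbol{Y}_0)$ uniquely exists (in the strong, resp.\ classical, sense) and put $u_S:=\boldsymbol{U}^\top\boldsymbol{Y}$. The computation in the proof of Corollary~\ref{cor:dual-DO-form} shows $u_S$ satisfies \eqref{eq:original-eq-with-proj}; the linear independence of $\boldsymbol{U}$ and orthonormality of $\boldsymbol{Y}$ give $u_S(t)\in M_S$; and $u_S$ has the required regularity, namely absolute continuity from that of $\boldsymbol{U},\boldsymbol{Y}$, and, in the classical case, continuous differentiability on $(0,T]$ together with $u_S(t)\in D(\Lambda)$, the latter because $u_S(t,\cdot)=\sum_j U_j(t)Y_j(t,\cdot)$ is a finite $\mathcal H$-linear combination of the $U_j(t)\in D_{\mathcal H}(\Lambda)$. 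Thus $u_S$ is a solution of the claimed type. For uniqueness, let $\tilde u_S$ be any solution of \eqref{eq:original-eq-with-proj} of the same type with $\tilde u_S(0)=u_{0S}$; Lemma~\ref{lem:unique-lift} (resp.\ its classical analogue, whose construction uses only Proposition~\ref{prop:smooth-SVD} and the linear ODE \eqref{eq:theta-eq}, and hence does not presuppose uniqueness of the original solution) produces a Dual DO solution $(\tilde{\boldsymbol{U}},\tilde{\boldsymbol{Y}})$ with initial data $(\boldsymbol{U}_0,\boldsymbol{Y}_0)$ and $\tilde u_S=\tilde{\boldsymbol{U}}^\top\tilde{\boldsymbol{Y}}$; uniqueness of the Dual DO solution gives $(\tilde{\boldsymbol{U}},\tilde{\boldsymbol{Y}})=(\boldsymbol{U},\boldsymbol{Y})$ and hence $\tilde u_S=u_S$.

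The routine parts are the bookkeeping with the rank/orthonormality constraints and with absolute continuity; the genuinely delicate points are confined to the classical case. The main obstacle is to justify the passage between ``$u_S(t)\in D(\Lambda)$'' and ``$U_j(t)\in D_{\mathcal H}(\Lambda)$'', which rests on the fact that the pointwise-in-$\Omega$ closed operator $\Lambda$ commutes with the averaging maps $f\mapsto\mathbb{E}[fW_j]$, i.e.\ a Hille-type theorem for closed operators and Bochner integrals, and, relatedly, on checking that multiplication by the rotation $\Theta$ preserves both $C^1$-regularity on $(0,T]$ and membership in $D_{\mathcal H}(\Lambda)$. One must also be careful that the classical analogue of Lemma~\ref{lem:unique-lift} invoked in the converse is not circular; it is not, since its construction appeals only to Proposition~\ref{prop:smooth-SVD} and to the linear ODE \eqref{eq:theta-eq}.
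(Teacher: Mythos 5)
Your proposal is correct and takes essentially the same route as the paper: the forward direction rests on Lemma~\ref{lem:unique-lift}, and the converse rests on the computation in the proof of Corollary~\ref{cor:dual-DO-form} (that a Dual DO solution gives a solution of \eqref{eq:original-eq-with-proj}) together with Lemma~\ref{lem:unique-lift} again for uniqueness. You spell out more than the paper's terse ``the argument for the classical solution is analogous'' --- in particular the Hille-type commutation of the closed operator $\Lambda$ with $\mathbb{E}[\,\cdot\,W_j]$ to pass between $u_S(t)\in D(\Lambda)$ and $U_j(t)\in D_{\mathcal H}(\Lambda)$, and the $C^1$-regularity of the rotation $\Theta$ on $(0,T]$ --- and you correctly observe that invoking Lemma~\ref{lem:unique-lift} in the converse uniqueness argument is not circular, since its construction only relies on Proposition~\ref{prop:smooth-SVD} and the linear ODE \eqref{eq:theta-eq}; the paper relies on the same fact implicitly.
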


\begin{proof}
	The first direction is a direct consequence of the previous
	lemma for strong solutions.
	
	Suppose that the Dual DO approximation $(\boldsymbol{U}(t),\boldsymbol{Y}(t))_{t\in[0,T]}$
	uniquely exists in the strong sense. 
	Then, from the {derivation}
	of the Dual DO equation \eqref{eq:working-eq}, $t\mapsto\boldsymbol{U}^{\top}(t)\boldsymbol{Y}(t)\in M_{S}$
	is a solution of the original equation \eqref{eq:original-eq-with-proj}.
	
	Now, we show the uniqueness. Suppose that $t\mapsto\hat{u}_{S}(t)\neq\boldsymbol{U}^{\top}(t)\boldsymbol{Y}(t)$
	satisfies the original equation \eqref{eq:original-eq-with-proj}.
	From Lemma \ref{lem:unique-lift}, there exists a unique Dual DO approximation
	$(\hat{\boldsymbol{U}},\hat{\boldsymbol{Y}})$ associated with
	$\hat{u}_{S}$ {and the decomposition $\hat{u}_{S}(0)=\boldsymbol{U}_0^{\top}\boldsymbol{Y}_0$}, i.e.\ $(\hat{\boldsymbol{U}}(t),\hat{\boldsymbol{Y}}(t))$
	is a solution of the Dual DO equation \eqref{eq:working-eq}. 
	But from the assumption we must have $(\hat{\boldsymbol{U}}(t),\hat{\boldsymbol{Y}}(t))=(\boldsymbol{U}(t),\boldsymbol{Y}(t))$,
	$t\in[0,T]$ and therefore $\hat{\boldsymbol{U}}(t)^{\top}\hat{\boldsymbol{Y}}(t)=\hat{u}_{S}(t)=\boldsymbol{U}(t)^{\top}\boldsymbol{Y}(t)=u_{S}(t)$,
	a contradiction.
	
	The argument for the classical solution is analogous.
	\end{proof}

\subsection{Assumptions}\label{sec:Assumptions}

In view of Proposition \ref{prop:existence-equiv}, we establish the
unique existence of the Dual DO approximation. We work under the following
assumptions. Assumptions~\ref{assu:Lam} and \ref{assu:lip-F-H}
will be used for the existence in the strong sense, and in addition,
Assumption~\ref{assu:lip-F-D(Lam)} will be used for the classical
sense. Further, the stability Assumptions~\ref{assu:stab} and \ref{assu:glob-lin-F-D(A)}
will be used to establish the extendability of the strong solution,
and respectively the classical solution, to the maximal time interval.
\begin{assumption}
	\label{assu:Lam}{$\Lambda:D_{\mathcal{H}}(\Lambda)\subset\mathcal{H}\rightarrow\mathcal{H}$
		is a closed linear operator that is densely defined in $\mathcal{H}$.
		Furthermore, $\Lambda$ is the infinitesimal generator of the $C_{0}$
		semigroup $\mathrm{e}^{t\Lambda}$ satisfying $\|\mathrm{e}^{t\Lambda}\|_{\mathcal{H}\to\mathcal{H}}\leq K_{\Lambda}\mathrm{e}^{-\lambda t}$
		for $t\geq0$, with constants $K_{\Lambda}\ge1$ and $\lambda\ge0$.}
\end{assumption}

\begin{assumption}
	\label{assu:lip-F-H}The mapping $F:L^{2}(\Omega;\mathcal{H})\rightarrow L^{2}(\Omega;\mathcal{H})$
	is locally Lipschitz continuous on \textup{$L^{2}(\Omega;\mathcal{H})$}
	in the following sense: for every $r>0$ and every $v_{0}\in L^{2}(\Omega;\mathcal{H})$
	such that $\|v_{0}\|_{L^{2}(\Omega;\mathcal{H})}\leq q$, there exists
	a constant $C_{q,r}>0$ such that
	\[
	\|F(w)-F(w')\|_{L^{2}(\Omega;\mathcal{H})}\leq C_{q,r}\|w-w'\|_{L^{2}(\Omega;\mathcal{H})}
	\]
	holds for all $w,w'\in L^{2}(\Omega;\mathcal{H})$ with $\|w-v_{0}\|_{L^{2}(\Omega;\mathcal{H})}\le r$,
	$\|w'-v_{0}\|_{L^{2}(\Omega;\mathcal{H})}\le r$. Furthermore, we
	assume $\|F(v_{0})\|_{L^{2}(\Omega;\mathcal{H})}<C'{}_{q}<\infty$.
\end{assumption}

In the above assumption, note that given the first condition, the
second condition is implied by $\|F(a)\|_{L^{2}(\Omega;\mathcal{H})}<\infty$
for a point $a\in L^{2}(\Omega;\mathcal{H})$.

In practice, one might be interested in the classical solution. To
establish the existence of the Dual DO approximation in the classical
sense, we use the following further regularity of $F$.
\begin{assumption}
	\label{assu:lip-F-D(Lam)}In addition to Assumption \ref{assu:lip-F-H},
	assume that for every $r>0$ and every $v_{0}\in L^{2}(\Omega;\mathcal{H})$
	with $\Lambda v_{0}\in L^{2}(\Omega;\mathcal{H})$ such that $\|\Lambda v_{0}\|_{L^{2}(\Omega;\mathcal{H})}\leq q$,
	there exists a constant $C_{q,r}>0$ such that
	\[
	\|\Lambda(F(w)-F(w'))\|_{L^{2}(\Omega;\mathcal{H})}\leq C{}_{q,r}\|\Lambda(w-w')\|_{L^{2}(\Omega;\mathcal{H})}
	\]
	holds for any $w,w'\in L^{2}(\Omega;\mathcal{H})$ satisfying $\Lambda w,\Lambda w'\in L^{2}(\Omega;\mathcal{H})$
	with $\|\Lambda(w-v_{0})\|_{L^{2}(\Omega;\mathcal{H})}\le r,\|\Lambda(w'-v_{0})\|_{L^{2}(\Omega;\mathcal{H})}\le r$\textup{.}
	Further, assume $\|\Lambda F(v_{0})\|_{L^{2}(\Omega;\mathcal{H})}<C'{}_{q}<\infty$. 
\end{assumption}

Since $\Lambda$ is closed, $D_{\mathcal{H}}(\Lambda)$ admits a Hilbert
space structure with respect to the graph inner product ${\langle\cdot,\cdot\rangle+\langle\Lambda\cdot,\Lambda\cdot\rangle}$,
which we denote $\mathcal{V}$. Then, Assumptions~\ref{assu:lip-F-H}--\ref{assu:lip-F-D(Lam)}
{imply that for a constant $\tilde{C}_{q,r}>0$ we have
	\[
	\|F(w)-F(w')\|_{L^{2}(\Omega;\mathcal{V})}\leq\tilde{C}_{q,r}\|w-w'\|_{L^{2}(\Omega;\mathcal{V})}
	\]
	for any $w,w'\in\mathcal{V}$ satisfying $\|w-v_{0}\|_{L^{2}(\Omega;\mathcal{V})}\le r,\|w'-v_{0}\|_{L^{2}(\Omega;\mathcal{V})}\le r$,
	and moreover, $\|F(v_{0})\|_{L^{2}(\Omega;\mathcal{V})}<\tilde{C}'{}_{q}<\infty$.}

The following uniform stability condition will be used to establish
the existence of a Dual DO solution in the maximal interval, in the
strong sense. Here, uniform means that the constant $C_{\Lambda,F}$
below is independent of bounds of $v$.

\begin{assumption}
	\label{assu:stab}The pair $(\Lambda,F)$ satisfies the following:
	for every $v\in L^{2}(\Omega;\mathcal{H})$ such that $\Lambda v\in L^{2}(\Omega;\mathcal{H})$
	we have
	\[
	\mathbb{E}[\langle\Lambda(v)+F(v),v\rangle]\leq C_{\Lambda,F}(1+\|v\|_{L^{2}(\Omega;\mathcal{H})}^{2}).
	\]
\end{assumption}

For example, this condition holds when $\Lambda$ satisfies
$\langle\Lambda x,x\rangle\leq0$ for $x\in D_{\mathcal{H}}(\Lambda)$
and $F$ satisfies the uniform linear growth condition $\|F(v)\|_{L^{2}(\Omega;\mathcal{H})}\leq C'_{F}(1+\|v\|_{{L^{2}(\Omega;\mathcal{H})}})$
for some $C'_{F}>0$.

To establish the existence of the DO solution in the maximal
interval in the classical sense, we use the following {stronger uniform stability}
condition, where we again note that the constant is independent of
bounds of $v$. 
\begin{assumption}
	\label{assu:glob-lin-F-D(A)} For every $v\in L^{2}(\Omega;\mathcal{H})$
	such that $\Lambda v\in L^{2}(\Omega;\mathcal{H})$ we have
	\[
	\|\Lambda F(v)\|_{L^{2}(\Omega;\mathcal{H})}\le C_{F}(1+\|\Lambda v\|_{L^{2}(\Omega;\mathcal{H})}),\quad\text{where }C_{F}>0\ \text{is independent of }v.
	\]
\end{assumption}

The following examples satisfy the above assumptions.
\begin{example}
	For a bounded domain $D\subset\mathbb{R}^{d}$, let $\mathcal{H}=L^{2}(D)$.
	Further, let $\tilde{\Lambda}$ be a second order uniformly elliptic
	differential operator with zero Dirichlet boundary condition. For
	the non-linear term, let $a,b\in L^{\infty}(\Omega;L^{\infty}(D))$,
	$c\in L^{2}(\Omega;L^{2}(D))$, and let $f\colon\mathbb{R}\to\mathbb{R}$
	be a differentiable function such that $\sup_{s\in\mathbb{R}}|f'(s)|<\infty$.
	Consider the following multiplicative and additive noise:
	\[
	\tilde{F}(v):=a\cdot f(v\cdot b)+c,\ \text{for }v\in L^{2}(\Omega;L^{2}(D)),
	\]
	where $\cdot$ denotes the point-wise multiplication. Then, the pair
	$(\tilde{\Lambda},\tilde{F})$ satisfies Assumptions~\ref{assu:Lam},
	\ref{assu:lip-F-H}, and~\ref{assu:stab}.
\end{example}

\begin{example}
	Let $f(x)=x$. With $a\in\!\!\; L^{\infty}(\Omega;W^{\infty,2}(D))$
	and $c\in\!\!\;  L^{2}(\Omega;L^{2}(D))$, let
	\[
	\tilde{\tilde{F}}(v):=a\cdot v+c,\ \text{for }v\in L^{2}(\Omega;L^{2}(D)).
	\]
	Then, the pair $(\tilde{\Lambda},\tilde{\tilde{F}})$ satisfies Assumptions
	\ref{assu:Lam}--\ref{assu:glob-lin-F-D(A)}.
\end{example}

\subsection{{On the choice of the Dual DO formulation}\label{subsec:Discussion-on-gauge}}

{To establish uniqueness and existence of the DLR approximation
	we work with the Dual DO formulation \eqref{eq:working-eq}. We have
	chosen this formulation with care. This section provides a discussion
	on choosing a good formulation.}

The DLR approach to the stochastic dynamical system such as \eqref{eq:exact-eq}
was first introduced by Sapsis and Lermusiaux \cite{Sapsis.T_Lermusiaux_2009_DO}.
The formulation they introduced is called the Dynamically Orthogonal
(DO) formulation: they imposed the orthogonality of the spatial basis.
Musharbash et al.\ \cite{Musharbash.E_etal_2015_SISC} pointed out
that the DO approximation can be related to the MCTDH method, by considering
the so-called dynamically double orthogonal (DDO) formulation: yet
another equivalent formulation of the DLR approach. Through this relation
of the DDO approximation to the MCTDH method, Musharbash et al.\ further
developed an error estimate of the DO method. The error analysis obtained
by Musharbash et al.\ was partially built upon results regarding
the MCTDH method.

A reasonable strategy to establish the existence of the DLR approximation
would thus be to establish the existence of the DDO approximation.
Namely, following the argument of Koch and Lubich \cite{Koch.O_Lubich_2007_regularity_existence},
it is tempting to apply the gauge condition defined by the differential
operator $\Lambda$ to the DDO formulation. It turns out that this approach
does not work, since the aforementioned
gauge condition turns out to be vacuous unless $\Lambda$ is skew-symmetric, as we illustrate hereafter.

In the DDO formulation, we seek an approximant of the form
\[
u_{S}(t)=\tilde{\boldsymbol{U}}^{\top}(t)A(t)\boldsymbol{Y}(t),
\]
where $\tilde{\boldsymbol{U}}(t)=(U_{1}(t),\dots,U_{S}(t))^{\top}$,
and $\boldsymbol{Y}(t)=(Y_{1}(t),\dots,Y_{S}(t))^{\top}$ are orthonormal
in $\mathcal{H}$, and in $L^{2}(\Omega)$ respectively; and $A(t)\in\mathbb{R}^{S\times S}$
is a full-rank matrix. The triplet $(\tilde{\boldsymbol{U}}(t),A(t),\boldsymbol{Y}(t))$
is given as a solution of the set of equations:
\begin{align}
\frac{\mathrm{d}}{\mathrm{d}t}A & =\mathbb{E}\left[\left\langle \Lambda(u_{S})+F(u_{S}),\tilde{\boldsymbol{U}}\right\rangle \boldsymbol{Y}^{\top}\right],\nonumber \\
A^{\top}\frac{\mathrm{d}\tilde{\boldsymbol{U}}}{\mathrm{d}t} & =(I-P_{\tilde{\boldsymbol{U}}})A^{\top}\Lambda(\tilde{\boldsymbol{U}})+(I-P_{\tilde{\boldsymbol{U}}})\mathbb{E}\left[\boldsymbol{Y}\big(F(u_{S})\big)\right],\label{eq:DDO-U}\\
A\frac{\partial\boldsymbol{Y}}{\partial t} & =(I-P_{\boldsymbol{Y}})\left\langle \Lambda(\tilde{\boldsymbol{U}}^{\top})A\boldsymbol{Y}+F(u_{S}),\tilde{\boldsymbol{U}}\right\rangle ,\nonumber 
\end{align}
where $P_{\tilde{\boldsymbol{U}}}:\mathcal{H}\rightarrow\mathrm{span}\{\tilde{U}_{j}:j=1,\dotsc,S\}$
is the $\mathcal{H}$-orthogonal projection onto $\mathrm{span}\{\tilde{U}_{j}:j=1,\dotsc S\}$,
and $P_{\boldsymbol{Y}}:L^{2}(\Omega)\rightarrow\mathrm{span}\{Y_{j}:j=1,\dotsc,S\}$
is the $L^{2}(\Omega)$-orthogonal projection onto $\mathrm{span}\{Y_{j}:j=1,\dotsc S\}$.
These equations are derived using the orthonormality assumption on
$(\tilde{\boldsymbol{U}},\boldsymbol{Y})$ together with the {gauge}
conditions
\begin{equation}
\langle\frac{\partial}{\partial t}\tilde{\boldsymbol{U}},\tilde{\boldsymbol{U}}^{\top}\rangle=0\text{ and }\mathbb{E}\left[\Big(\frac{\partial}{\partial t}\boldsymbol{Y}\Big)\boldsymbol{Y}^{\top}\right]=0,\label{eq:gauge-on-both}
\end{equation}
see \cite[(3.14)--(3.17)]{Musharbash.E_etal_2015_SISC}.

We note that in the equation \eqref{eq:DDO-U} for $\tilde{\boldsymbol{U}}$
we have the composition of the unbounded operator $\Lambda$ and the
projection operator $P_{\tilde{\boldsymbol{U}}}$, where we note that
the map $\tilde{\boldsymbol{U}}\mapsto P_{\tilde{\boldsymbol{U}}}$
is non-linear. Koch and Lubich \cite{Koch.O_Lubich_2007_regularity_existence}
had a similar situation in the MCTDH setting. As outlined above, they
got away with this problem by considering a different gauge condition.
We will explain below an analogous strategy and why it does not work
in our setting.

First, from the orthonormality condition on $\tilde{\boldsymbol{U}}$
it is necessary to have $\frac{\mathrm{d}}{\mathrm{d}t}\langle\tilde{\boldsymbol{U}},\tilde{\boldsymbol{U}}^{\top}\rangle=0$.
The above gauge condition \eqref{eq:gauge-on-both} on $\tilde{\boldsymbol{U}}$
is sufficient for this to hold. But since
\[
\frac{\mathrm{d}}{\mathrm{d}t}\langle\tilde{\boldsymbol{U}},\tilde{\boldsymbol{U}}^{\top}\rangle=\langle\frac{\partial}{\partial t}\tilde{\boldsymbol{U}},\tilde{\boldsymbol{U}}^{\top}\rangle+\langle\tilde{\boldsymbol{U}},\frac{\partial}{\partial t}\tilde{\boldsymbol{U}}^{\top}\rangle,
\]
the solution $\tilde{\boldsymbol{U}}$ stays orthonormal if and only
if we impose the gauge condition $\langle\frac{\partial}{\partial t}\tilde{\boldsymbol{U}},\tilde{\boldsymbol{U}}^{\top}\rangle=-\langle\tilde{\boldsymbol{U}},\frac{\partial}{\partial t}\tilde{\boldsymbol{U}}^{\top}\rangle$.
Koch and Lubich \cite{Koch.O_Lubich_2007_regularity_existence} noted
this, and to establish an existence result they considered a suitable
gauge condition, which enabled them to take the differential operator
out of the projection. The gauge condition that is formally
analogous to \cite{Koch.O_Lubich_2007_regularity_existence} may be
given as
\[
\langle\frac{\partial}{\partial t}\tilde{\boldsymbol{U}},\tilde{\boldsymbol{U}}^{\top}\rangle=\langle\Lambda\tilde{\boldsymbol{U}},\tilde{\boldsymbol{U}}^{\top}\rangle,
\]
for $\Lambda$ not necessarily skew-symmetric. One can check that
this condition formally allows us to take the operator $\Lambda$
out of the projection $P_{\tilde{\boldsymbol{U}}}$, but for example
when $\Lambda$ is self-adjoint, the solution $\tilde{\boldsymbol{U}}$
will not stay orthonormal. This is {not acceptable}, since we
use the orthonormality to derive the equations \eqref{eq:DDO-U},
and thus we necessarily have to consider a different gauge condition
or a different formulation.
\section{Parameter equation\label{sec:param-eq}}
This section introduces the parameter equation, for which we establish
the unique existence of the solution later in Section~\ref{sec:Existence-and-Regularity}.  Consider the direct sum of the Hilbert spaces ${\mathcal{X}:=}[\mathcal{H}]^{S}\oplus[L^{2}(\Omega)]^{S}$ 
equipped with the inner product
\[
\langle(\hat{\boldsymbol{U}},\hat{\boldsymbol{Y}}),(\hat{\boldsymbol{V}},\hat{\boldsymbol{W}})\rangle_{\mathcal{X}}:=\langle\hat{\boldsymbol{U}},\hat{\boldsymbol{V}}\rangle_{[\mathcal{H}]^{S}}+\langle\hat{\boldsymbol{Y}},\hat{\boldsymbol{W}}\rangle_{[L^{2}(\Omega)]^{S}}.
\]
{In what follows, we redefine the operator $\Lambda$ as $\Lambda\colon D_{\mathcal{H}}(\Lambda)\subset[\mathcal{H}]^{S}\to[\mathcal{H}]^{S}$,
	$\boldsymbol{U}\mapsto(\Lambda U_{1},\dots,\Lambda U_{S})=:\Lambda\boldsymbol{U}$
	for $\boldsymbol{U}\in D_{\mathcal{H}}(\Lambda)\subset[\mathcal{H}]^{S}$.}
We define the linear operator $A:\mathcal{X}\rightarrow\mathcal{X}$
by 
\[
A(\hat{\boldsymbol{U}},\hat{\boldsymbol{Y}})=(\Lambda\hat{\boldsymbol{U}},0)\quad\text{for }(\hat{\boldsymbol{U}},\hat{\boldsymbol{Y}})\in\mathcal{X},
\]
with $D(A)=D_{\mathcal{H}}(\Lambda)\oplus[L^{2}(\Omega)]^{S}$. Further,
we define the mapping $G:D(G)\subset\mathcal{X}\rightarrow\mathcal{X}$
by
\begin{align}
G(\hat{\boldsymbol{U}},\hat{\boldsymbol{Y}}) & :=\left([G_{1}(\hat{\boldsymbol{Y}})](\hat{\boldsymbol{U}}),[G_{2}(\hat{\boldsymbol{U}})](\hat{\boldsymbol{Y}})\right)\nonumber \\
& :=\left(\mathbb{E}\left[F(\hat{\boldsymbol{U}}^{\top}\hat{\boldsymbol{Y}})\hat{\boldsymbol{Y}}\right],(I-P_{\hat{\boldsymbol{Y}}})\big(\langle F(\hat{\boldsymbol{U}}^{\top}\hat{\boldsymbol{Y}}),Z_{\hat{\boldsymbol{U}}}^{-1}\hat{\boldsymbol{U}}\rangle\big)\right),\label{eq:def-G}
\end{align}
where $D(G):=\{(\hat{\boldsymbol{U}},\hat{\boldsymbol{Y}})\in\mathcal{X}\mid Z_{\hat{\boldsymbol{U}}}^{-1}\text{ exists}\}$.
Then, the Dual DO solution, if it exists, satisfies the following
Cauchy problem for a semi-linear abstract evolution equation in~$\mathcal{X}$:
\begin{equation}
\left\{ \begin{array}{rl}
\frac{\mathrm{d}}{\mathrm{d}t}(\boldsymbol{U},\boldsymbol{Y}) & =A(\boldsymbol{U},\boldsymbol{Y})+G(\boldsymbol{U},\boldsymbol{Y})\quad\text{for}\ t>0,\\
(\boldsymbol{U}(0),\boldsymbol{Y}(0)) & =(\boldsymbol{U}_{0},\boldsymbol{Y}_{0}),
\end{array}\right.\label{eq:Cauchy}
\end{equation}
where the initial condition $(\boldsymbol{U}_{0},\boldsymbol{Y}_{0})\in\mathcal{X}$
satisfies suitable assumptions detailed below. Conversely, later in
Section \ref{sec:Existence-and-Regularity} we will see that the strong
solution of this Cauchy problem is a Dual DO {solution of \eqref{eq:original-eq-with-proj}, so, in particular, $\mathbb{E}[\boldsymbol{Y}\boldsymbol{Y}^\top]=I$ and $\mathbb{E}[\dot{\boldsymbol{Y}}\boldsymbol{Y}^\top]=0$}, and thus
in view of Proposition~\ref{prop:existence-equiv}, it is a solution
of the original problem \eqref{eq:original-eq-with-proj}.

We first establish the unique existence of the mild solution of the
problem \eqref{eq:Cauchy}:
\begin{align*}
\boldsymbol{U}(t) & =e^{t\Lambda}\boldsymbol{U}(0)+\int_{0}^{t}e^{(t-\tau)\Lambda}\big[G_{1}\big(\boldsymbol{Y}(\tau)\big)\big]\big(\boldsymbol{U}(\tau)\big)\mathrm{d}\tau,\\
\boldsymbol{Y}(t) & =\boldsymbol{Y}(0)+\int_{0}^{t}\big[G_{2}\big(\boldsymbol{U}(\tau)\big)\big]\big(\boldsymbol{Y}(\tau)\big)\mathrm{d}\tau.
\end{align*}
We will use the following result, which is a variation of
a standard local existence and uniqueness theorem for mild solutions, e.g.\ see \cite[Theorem 6.1.4]{Pazy.A_1983_book} or \cite[Theorem 46.1]{Sell.G_You_2013_book}, adapted to our setting.
\begin{proposition}
	\label{prop:gen-mild-local}
	Suppose that 
	the operator $A\colon D(A)\subset\mathcal{X}\rightarrow\mathcal{X}$
	generates a $C_{0}$ semigroup $\mathrm{e}^{tA}$, $t\ge0$
	on $\mathcal{X}$.
	Suppose further that the mapping $G\colon\mathcal{X}\rightarrow\mathcal{X}$
	is locally Lipschitz continuous on $\mathcal{X}$ in the
	following sense: 
	for an element $(\hat{\boldsymbol{U}},\hat{\boldsymbol{Y}})\in\mathcal{X}$
	with $\alpha\ge\|\hat{\boldsymbol{U}}\|_{[\mathcal{H}]^{S}}$
	and $\beta\geq\|\hat{\boldsymbol{Y}}\|_{[L^{2}(\Omega)]^{S}}$, there
	exists $r=r(\hat{\boldsymbol{U}},\hat{\boldsymbol{Y}})>0$ and $C_{\alpha,\beta}>0$
	such that
	\[
	\|G(\boldsymbol{V},\boldsymbol{W})-G(\boldsymbol{V}',\boldsymbol{W}')\|_{\mathcal{X}}\leq C_{\alpha,\beta}\|(\boldsymbol{V},\boldsymbol{W})-(\boldsymbol{V}',\boldsymbol{W}')\|_{\mathcal{X}}
	\]
	holds for all $(\boldsymbol{V},\boldsymbol{W}),(\boldsymbol{V}',\boldsymbol{W}')\in\mathcal{X}$
	with $\|\boldsymbol{W}-\hat{\boldsymbol{Y}}\|_{[L^{2}(\Omega)]^{S}}\leq r$
	and $\|\boldsymbol{W}'-\hat{\boldsymbol{Y}}\|_{[L^{2}(\Omega)]^{S}}\leq r$;
	$\|\boldsymbol{V}-\hat{\boldsymbol{U}}\|_{[\mathcal{H}]^{S}}\leq r$
	and $\|\boldsymbol{V}'-\hat{\boldsymbol{U}}\|_{[\mathcal{H}]^{S}}\leq r$.
	Further, suppose that for some $C'_{\alpha,\beta}>0$ we have
	\[
	\|G(\hat{\boldsymbol{U}},\hat{\boldsymbol{Y}})\|_{\mathcal{X}}\leq C'_{\alpha,\beta}.
	\]
	Then, the problem \eqref{eq:Cauchy} starting at $t_0\geq0$ with the initial condition $(\hat{\boldsymbol{U}},\hat{\boldsymbol{Y}})\in{\mathcal{X}}$: 
	\begin{equation*}
	\left\{ \begin{array}{rl}
	\frac{\mathrm{d}}{\mathrm{d}t}(\boldsymbol{U},\boldsymbol{Y}) & =A(\boldsymbol{U},\boldsymbol{Y})+G(\boldsymbol{U},\boldsymbol{Y})\quad\text{for}\ t>t_0,\\
	(\boldsymbol{U}(t_0),\boldsymbol{Y}(t_0)) & =(\hat{\boldsymbol{U}},\hat{\boldsymbol{Y}}),
	\end{array}\right.
	\label{eq:Cauchy-t0}
	\end{equation*}
	has a unique mild solution on an interval of length $\delta\in(0,1]$,
	where $\delta$ depends on $\alpha$, $\beta$, $\sup_{s{\in[t_0,t_0+1]}}\|\mathrm{e}^{sA}\|$,
	and $r=r(\hat{\boldsymbol{U}},\hat{\boldsymbol{Y}})$.
\end{proposition}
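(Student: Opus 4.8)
The plan is the standard Picard/Banach fixed-point argument for mild solutions of a semilinear evolution equation (as in \cite[Theorem 6.1.4]{Pazy.A_1983_book} or \cite[Theorem 46.1]{Sell.G_You_2013_book}), the only points of care being the two-radius form of the local Lipschitz hypothesis and the strong continuity of the inhomogeneous initial term. Since $\mathrm{e}^{\cdot A}$ is a $C_{0}$ semigroup we may shift and take $t_{0}=0$; write $v_{0}:=(\hat{\boldsymbol{U}},\hat{\boldsymbol{Y}})$, fix $\alpha\ge\|\hat{\boldsymbol{U}}\|_{[\mathcal{H}]^{S}}$ and $\beta\ge\|\hat{\boldsymbol{Y}}\|_{[L^{2}(\Omega)]^{S}}$, and put $M:=\sup_{s\in[0,1]}\|\mathrm{e}^{sA}\|_{\mathcal{X}\to\mathcal{X}}<\infty$ (finite since $\mathrm{e}^{\cdot A}$ is a $C_{0}$ semigroup). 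For $\delta\in(0,1]$ to be chosen, let $E_{\delta}:=\{z\in C([0,\delta];\mathcal{X}):\sup_{t\in[0,\delta]}\|z(t)-v_{0}\|_{\mathcal{X}}\le r\}$, a closed, hence complete, subset of $C([0,\delta];\mathcal{X})$ with the supremum norm, and on it define
\[
(\mathcal{T}z)(t):=\mathrm{e}^{tA}v_{0}+\int_{0}^{t}\mathrm{e}^{(t-\tau)A}G(z(\tau))\,\mathrm{d}\tau,\qquad t\in[0,\delta].
\]
By strong continuity of the semigroup together with continuity of $z$, the map $\tau\mapsto G(z(\tau))$ is continuous and bounded on $[0,\delta]$, so the usual argument for the Duhamel term gives $\mathcal{T}z\in C([0,\delta];\mathcal{X})$; and a fixed point of $\mathcal{T}$ is precisely a mild solution of \eqref{eq:Cauchy} starting from $v_{0}$, written componentwise as the integral system for $(\boldsymbol{U},\boldsymbol{Y})$.

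First I would check that $\mathcal{T}(E_{\delta})\subseteq E_{\delta}$ for $\delta$ small. The key elementary observation is that, since $\|(\boldsymbol{V},\boldsymbol{W})-v_{0}\|_{\mathcal{X}}^{2}=\|\boldsymbol{V}-\hat{\boldsymbol{U}}\|_{[\mathcal{H}]^{S}}^{2}+\|\boldsymbol{W}-\hat{\boldsymbol{Y}}\|_{[L^{2}(\Omega)]^{S}}^{2}$, the closed $\mathcal{X}$-ball $\bar B_{r}(v_{0})$ is contained in the product set $\{\|\boldsymbol{V}-\hat{\boldsymbol{U}}\|_{[\mathcal{H}]^{S}}\le r,\ \|\boldsymbol{W}-\hat{\boldsymbol{Y}}\|_{[L^{2}(\Omega)]^{S}}\le r\}$ on which the Lipschitz bound for $G$ holds; hence for $z\in E_{\delta}$ we get $\|G(z(\tau))\|_{\mathcal{X}}\le\|G(z(\tau))-G(v_{0})\|_{\mathcal{X}}+\|G(v_{0})\|_{\mathcal{X}}\le C_{\alpha,\beta}r+C'_{\alpha,\beta}=:N$, and therefore
\[
\|(\mathcal{T}z)(t)-v_{0}\|_{\mathcal{X}}\le\sup_{s\in[0,\delta]}\|\mathrm{e}^{sA}v_{0}-v_{0}\|_{\mathcal{X}}+M N\delta\qquad(t\in[0,\delta]).
\]
Because $s\mapsto\mathrm{e}^{sA}v_{0}$ is continuous at $0$ with value $v_{0}$, the first term tends to $0$ as $\delta\downarrow0$; choosing $\delta$ so small that it is $\le r/4$ and that $M N\delta\le r/2$ gives $\mathcal{T}(E_{\delta})\subseteq E_{\delta}$, with $\mathcal{T}z$ moreover staying strictly inside $\bar B_{r}(v_{0})$.

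Next, for $z_{1},z_{2}\in E_{\delta}$ the $\mathrm{e}^{tA}v_{0}$ term cancels and the Lipschitz bound gives $\sup_{t\in[0,\delta]}\|(\mathcal{T}z_{1})(t)-(\mathcal{T}z_{2})(t)\|_{\mathcal{X}}\le M C_{\alpha,\beta}\delta\,\sup_{t\in[0,\delta]}\|z_{1}(t)-z_{2}(t)\|_{\mathcal{X}}$, so after also imposing $M C_{\alpha,\beta}\delta\le\tfrac12$ the map $\mathcal{T}$ is a $\tfrac12$-contraction on the complete metric space $E_{\delta}$, and the Banach fixed-point theorem yields a unique $z\in E_{\delta}$ with $\mathcal{T}z=z$, i.e. a mild solution on $[0,\delta]$. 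Tracking the constraints on $\delta$ shows it depends only on $r$, on $C_{\alpha,\beta}$ and $C'_{\alpha,\beta}$ (hence on $\alpha,\beta$), on $M=\sup_{s\in[0,1]}\|\mathrm{e}^{sA}\|$, and on the modulus of continuity of $s\mapsto\mathrm{e}^{sA}v_{0}$ at $0$ — i.e. on $\alpha,\beta,\sup_{s\in[0,1]}\|\mathrm{e}^{sA}\|$ and $r$, as claimed; the restriction $\delta\le1$ serves only to make $M$ a fixed constant. Uniqueness among all mild solutions on $[0,\delta]$ then follows by a Gronwall/continuation argument: if $z_{2}$ is another mild solution, then by continuity $\tau^{\ast}:=\sup\{\tau\le\delta:z_{2}([0,\tau])\subseteq\bar B_{r}(v_{0})\}>0$; on $[0,\tau^{\ast}]$ both $z$ and $z_{2}$ lie in $\bar B_{r}(v_{0})$, so from $\|z(t)-z_{2}(t)\|_{\mathcal{X}}\le M C_{\alpha,\beta}\int_{0}^{t}\|z-z_{2}\|_{\mathcal{X}}\,\mathrm{d}\tau$ Gronwall forces $z=z_{2}$ there, and since $z$ stays strictly inside $\bar B_{r}(v_{0})$ the solution $z_{2}$ cannot reach its boundary at $\tau^{\ast}$, whence $\tau^{\ast}=\delta$. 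I do not expect a genuine obstacle here: the argument is routine, and the only things that need attention are the bookkeeping just described — matching the two-radius Lipschitz hypothesis to a single $\mathcal{X}$-ball via the Pythagorean identity, the uniform control of $\mathrm{e}^{sA}v_{0}-v_{0}$ through strong continuity, the explicit dependence of $\delta$, and the extension of uniqueness beyond the a priori ball.
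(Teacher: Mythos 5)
The paper itself gives no proof of Proposition~\ref{prop:gen-mild-local}; it simply invokes the standard local existence theory for semilinear abstract Cauchy problems (Pazy, Thm.\ 6.1.4/6.1.5; Sell--You, Thm.\ 46.1). Your Banach fixed-point argument is precisely that standard argument and is correctly carried out: the Pythagorean observation that the $\mathcal{X}$-ball $\bar B_r(v_0)$ sits inside the product of the two coordinate balls lets you use the stated two-radius Lipschitz hypothesis on a single $\mathcal{X}$-ball, the invariance and contraction estimates are set up correctly, and the Gronwall continuation argument for uniqueness beyond the a priori ball is the right way to obtain uniqueness among \emph{all} mild solutions, not just those confined to $E_\delta$.

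One point in your final sentence should be corrected, though. You note — quite rightly — that your $\delta$ must also be small enough that $\sup_{s\le\delta}\|\mathrm{e}^{sA}v_0-v_0\|_{\mathcal{X}}\le r/4$, and that this constraint involves the modulus of strong continuity of $s\mapsto\mathrm{e}^{sA}v_0$ at $s=0$. But you then assert ``i.e.\ on $\alpha,\beta,\sup_s\|\mathrm{e}^{sA}\|$ and $r$, as claimed,'' which is not quite right. For a genuinely unbounded generator $A$ the strong continuity of $\mathrm{e}^{sA}$ at $s=0$ is \emph{not} uniform over bounded sets of $\mathcal{X}$ — two initial data of the same norm can have wildly different moduli of continuity (for $v_0\in D(A)$ one has $\|\mathrm{e}^{sA}v_0-v_0\|\le Ms\|Av_0\|$, a bound governed by $\|Av_0\|$ rather than $\|v_0\|$, and for $v_0\notin D(A)$ the modulus can be arbitrarily slow). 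So your $\delta$ carries an extra, genuine dependence on $v_0$ through this modulus, not captured by $\alpha$, $\beta$, $\sup_s\|\mathrm{e}^{sA}\|$, $r$, and there is no way to remove it with the hypotheses as stated (the Lipschitz estimate is only granted on a ball around $v_0$, so the working set must stay near $v_0$ and the term $\mathrm{e}^{sA}v_0-v_0$ is unavoidable). In fairness, the proposition's own phrasing of the dependence of $\delta$ is somewhat loose; and, more importantly, nothing downstream in the paper actually exploits uniformity of $\delta$ — in the maximal-interval arguments (Theorem~\ref{thm:strong-maximal}) the authors establish convergence of the solution to a limit at $t_{\max}$ directly and then merely need \emph{some} positive restart $\delta$, which your proof certainly provides. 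So the gap is in the bookkeeping sentence, not in the substance of the argument.
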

To invoke this proposition, we start with checking that the operator $A$ defined above generates
a $C_{0}$ semigroup.
\begin{proposition}
	Let Assumption \ref{assu:Lam} hold. Then, the operator $A\colon D(A)\subset\mathcal{X}\rightarrow\mathcal{X}$
	generates a $C_{0}$ semigroup $\mathrm{e}^{tA}$, $t\ge0$
	on $\mathcal{X}$ with the bound $\|\mathrm{e}^{tA}\|_{\mathcal{X}\to\mathcal{X}}\leq K_{\Lambda}$.
\end{proposition}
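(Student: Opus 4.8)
The plan is to exhibit the semigroup explicitly. Since $A$ acts as the (componentwise) operator $\Lambda$ on the first summand $[\mathcal{H}]^{S}$ and as the zero operator on the second summand $[L^{2}(\Omega)]^{S}$, the natural candidate is
\[
T(t)(\hat{\boldsymbol{U}},\hat{\boldsymbol{Y}}):=(\mathrm{e}^{t\Lambda}\hat{\boldsymbol{U}},\hat{\boldsymbol{Y}}),\qquad t\ge0,
\]
where $\mathrm{e}^{t\Lambda}\hat{\boldsymbol{U}}:=(\mathrm{e}^{t\Lambda}\hat{U}_{1},\dots,\mathrm{e}^{t\Lambda}\hat{U}_{S})$ and $\mathrm{e}^{t\Lambda}$ on $\mathcal{H}$ is the semigroup provided by Assumption~\ref{assu:Lam}. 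First I would check that $\{T(t)\}_{t\ge0}$ is a family of bounded operators on $\mathcal{X}$ with the claimed bound: for $(\hat{\boldsymbol{U}},\hat{\boldsymbol{Y}})\in\mathcal{X}$,
\[
\|T(t)(\hat{\boldsymbol{U}},\hat{\boldsymbol{Y}})\|_{\mathcal{X}}^{2}=\sum_{j=1}^{S}\|\mathrm{e}^{t\Lambda}\hat{U}_{j}\|_{\mathcal{H}}^{2}+\|\hat{\boldsymbol{Y}}\|_{[L^{2}(\Omega)]^{S}}^{2}\leq K_{\Lambda}^{2}\|\hat{\boldsymbol{U}}\|_{[\mathcal{H}]^{S}}^{2}+\|\hat{\boldsymbol{Y}}\|_{[L^{2}(\Omega)]^{S}}^{2}\leq K_{\Lambda}^{2}\|(\hat{\boldsymbol{U}},\hat{\boldsymbol{Y}})\|_{\mathcal{X}}^{2},
\]
using $\|\mathrm{e}^{t\Lambda}\|_{\mathcal{H}\to\mathcal{H}}\leq K_{\Lambda}\mathrm{e}^{-\lambda t}\leq K_{\Lambda}$ for $t\ge0$ (as $\lambda\ge0$) together with $K_{\Lambda}\ge1$; hence $\|T(t)\|_{\mathcal{X}\to\mathcal{X}}\leq K_{\Lambda}$.

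Next I would verify the $C_{0}$ semigroup axioms. The identity $T(0)=I$ is immediate, and $T(t+s)=T(t)T(s)$ follows coordinatewise from the semigroup law for $\mathrm{e}^{t\Lambda}$ on $\mathcal{H}$ together with the trivial semigroup law for the identity on $[L^{2}(\Omega)]^{S}$. Strong continuity at $t=0$ reduces, again coordinatewise, to the strong continuity of $\mathrm{e}^{t\Lambda}$ on $\mathcal{H}$: $\|T(t)(\hat{\boldsymbol{U}},\hat{\boldsymbol{Y}})-(\hat{\boldsymbol{U}},\hat{\boldsymbol{Y}})\|_{\mathcal{X}}^{2}=\sum_{j=1}^{S}\|\mathrm{e}^{t\Lambda}\hat{U}_{j}-\hat{U}_{j}\|_{\mathcal{H}}^{2}\to0$ as $t\downarrow0$, while the $[L^{2}(\Omega)]^{S}$-component contributes nothing.

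Finally I would identify the infinitesimal generator $B$ of $\{T(t)\}_{t\ge0}$ with $A$. For $(\hat{\boldsymbol{U}},\hat{\boldsymbol{Y}})\in\mathcal{X}$ one has $t^{-1}\big(T(t)(\hat{\boldsymbol{U}},\hat{\boldsymbol{Y}})-(\hat{\boldsymbol{U}},\hat{\boldsymbol{Y}})\big)=\big(t^{-1}(\mathrm{e}^{t\Lambda}\hat{\boldsymbol{U}}-\hat{\boldsymbol{U}}),0\big)$, whose limit as $t\downarrow0$ exists in $\mathcal{X}$ if and only if $\lim_{t\downarrow0}t^{-1}(\mathrm{e}^{t\Lambda}\hat{U}_{j}-\hat{U}_{j})$ exists in $\mathcal{H}$ for every $j$, i.e.\ if and only if $\hat{U}_{j}\in D_{\mathcal{H}}(\Lambda)$ for every $j$, equivalently $\hat{\boldsymbol{U}}\in D_{\mathcal{H}}(\Lambda)\subset[\mathcal{H}]^{S}$; in that case the limit equals $(\Lambda\hat{\boldsymbol{U}},0)=A(\hat{\boldsymbol{U}},\hat{\boldsymbol{Y}})$. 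Hence $D(B)=D_{\mathcal{H}}(\Lambda)\oplus[L^{2}(\Omega)]^{S}=D(A)$ and $B=A$, so $A$ generates the $C_{0}$ semigroup $\mathrm{e}^{tA}=T(t)$ with $\|\mathrm{e}^{tA}\|_{\mathcal{X}\to\mathcal{X}}\leq K_{\Lambda}$. The argument is essentially routine; the only point that needs a word of care is the ``if and only if'' in the generator identification, which is precisely the standard characterisation of the domain of a generator applied coordinate by coordinate. Alternatively, one could bypass the explicit computation by observing that $A$ is the direct sum of the $S$-fold componentwise operator $\Lambda$ on $[\mathcal{H}]^{S}$ (a $C_{0}$ semigroup generator of the same type, since $\Lambda$ is one on $\mathcal{H}$) and the bounded operator $0$ on $[L^{2}(\Omega)]^{S}$, and invoking the standard facts that such a direct sum and such a bounded perturbation preserve the generator property, with the norm bound inherited from the $\mathcal{H}$-level estimate.
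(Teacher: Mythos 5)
Your proof is correct, but it takes a genuinely different route from the paper's. The paper verifies the hypotheses of the Hille--Yosida theorem: it notes that $D(A)=D_{\mathcal{H}}(\Lambda)\oplus[L^{2}(\Omega)]^{S}$ is dense and that $A$ is closed, and then derives the resolvent bound $\|(\mu I-A)^{-n}\|_{\mathcal{X}\to\mathcal{X}}\leq K_{\Lambda}/\mu^{n}$ for all $\mu>0$ and $n\in\mathbb{N}$ from the corresponding bound for $\Lambda$ (which follows from $\|\mathrm{e}^{t\Lambda}\|\leq K_{\Lambda}\mathrm{e}^{-\lambda t}$ with $\lambda\geq 0$), concluding generation and the norm bound via Hille--Yosida. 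You instead bypass Hille--Yosida entirely by exhibiting the semigroup explicitly as $T(t)(\hat{\boldsymbol{U}},\hat{\boldsymbol{Y}})=(\mathrm{e}^{t\Lambda}\hat{\boldsymbol{U}},\hat{\boldsymbol{Y}})$, checking the $C_0$-semigroup axioms coordinatewise, and identifying the generator with $A$ by the standard difference-quotient characterisation of the generator's domain. Both arguments are sound. Yours is the more constructive and elementary: it does not invoke a generation theorem, and it makes the form of $\mathrm{e}^{tA}$ visible, which is precisely the structure the paper uses tacitly later when it writes the mild-solution formula $\boldsymbol{U}(t)=\mathrm{e}^{t\Lambda}\boldsymbol{U}(0)+\int_0^t\mathrm{e}^{(t-\tau)\Lambda}[G_1(\boldsymbol{Y}(\tau))](\boldsymbol{U}(\tau))\,\mathrm{d}\tau$, $\boldsymbol{Y}(t)=\boldsymbol{Y}(0)+\int_0^t[G_2(\boldsymbol{U}(\tau))](\boldsymbol{Y}(\tau))\,\mathrm{d}\tau$. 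The paper's Hille--Yosida route is shorter on the page and is a routine template; your route is slightly longer but gives you the semigroup formula as a free by-product rather than leaving it to be inferred.
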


\begin{proof}
	We note that $D(A)=D_{\mathcal{H}}(\Lambda)\oplus[L^{2}(\Omega)]^{S}$
	is dense in $\mathcal{X}$. Further, the closedness of $\Lambda\colon D_{\mathcal{H}}(\Lambda)\subset[\mathcal{H}]^{S}\to[\mathcal{H}]^{S}$
	implies that $A:D(A)\subset\mathcal{X}\rightarrow\mathcal{X}$ is
	closed. 
	
	We will invoke the Hille--Yosida theorem, see for example \cite[Theorem {1.5.2}]{Pazy.A_1983_book}.
	From Assumption \ref{assu:Lam}, every $\mu>0$ is in the resolvent
	set of $\Lambda$. Thus, $(\mu I-\Lambda)^{-1}\colon[\mathcal{H}]^{S}\to[\mathcal{H}]^{S}$
	as well as $(\mu I-0)^{-1}=\frac{1}{\mu}\colon[L^{2}(\Omega)]^{S}\to[L^{2}(\Omega)]^{S}$
	are well-defined, and so is $(\mu I-A)^{-1}$. For any $(\hat{\boldsymbol{U}},\hat{\boldsymbol{Y}})\in{\mathcal{X}}$,
	${n\in\mathbb{N}}$ we have
	\[
	\|(\mu I-A)^{-{n}}(\hat{\boldsymbol{U}},\hat{\boldsymbol{Y}})\|_{\mathcal{X}}^{2}=\|(\mu I-\Lambda)^{-{n}}\hat{\boldsymbol{U}}\|_{[\mathcal{H}]^{S}}^{2}+\frac{1}{\mu^{2{n}}}\|\hat{\boldsymbol{Y}}\|_{[L^{2}(\Omega)]^{S}}^{2},
	\]
	but Assumption \ref{assu:Lam} implies ${\|(\mu I-\Lambda)^{-n}\|_{[\mathcal{H}]^{S}}\leq K_{\Lambda}/\mu^{n}}$,
	and thus we obtain
	\begin{align*}
	\|(\mu I-A)^{-{n}}(\hat{\boldsymbol{U}},\hat{\boldsymbol{Y}})\|_{\mathcal{X}}^{2} & \leq\frac{{K_{\Lambda}^{2}}}{\mu^{2{n}}}\|\hat{\boldsymbol{U}}\|_{[\mathcal{H}]^{S}}^{2}+\frac{1}{\mu^{2{n}}}\|\hat{\boldsymbol{Y}}\|_{[L^{2}(\Omega)]^{S}}^{2}\\
	& {\leq}\frac{{K_{\Lambda}^{2}}}{\mu^{2{n}}}\|(\hat{\boldsymbol{U}},\hat{\boldsymbol{Y}})\|_{\mathcal{X}}^{2},
	\end{align*}
	and thus $\|(\mu I-A)^{-n}\|_{\mathcal{X}\to\mathcal{X}}\leq K_{\Lambda}/\mu^{n}$.
	In view of the Hille--Yosida theorem the statement follows.
	\end{proof}
{Furthermore}, we establish a Lipschitz continuity of the
non-linear term $G$. We start with the Lipschitz continuity of the projection
operator.
\begin{lemma}
	\label{lem:Wedin-less-1}For $\hat{\boldsymbol{Y}}=(\hat{Y}_{1},\dots,\hat{Y}_{S})^{\top}\in[L^{2}(\Omega)]^{S}$,
	suppose that the smallest eigenvalue $\sigma_{\hat{\boldsymbol{Y}}}$
	of the Gram matrix $E[\hat{\boldsymbol{Y}}\hat{\boldsymbol{Y}}^{\top}]$
	is non-zero. Further, let $\kappa\in(0,\overline{\kappa})$ be given,
	where with $\beta\geq\|\hat{\boldsymbol{Y}}\|_{[L^{2}(\Omega)]^{S}}$,
	we let
	\begin{equation}
	\overline{\kappa}
	:=\overline{\kappa}(\sigma_{\hat{\boldsymbol{Y}}},\beta)
	:=\frac{1}{2}\big(-\beta+\sqrt{\beta^{2}+{\sigma_{\hat{\boldsymbol{Y}}}}}\big).
	\label{eq:ub-kappa}
	\end{equation}
	Then, we have
	\begin{equation}
	\|(I-P_{\hat{\boldsymbol{W}}'})P_{\hat{\boldsymbol{W}}}\|_{[L^{2}(\Omega)]^{S}\to[L^{2}(\Omega)]^{S}}\leq C_{\kappa,\beta,\sigma_{\hat{\boldsymbol{Y}}}}\|(\hat{\boldsymbol{W}}-\hat{\boldsymbol{W}}')\|_{[L^{2}(\Omega)]^{S}}<1\label{eq:Wedin-less-1}
	\end{equation}
	for any $\hat{\boldsymbol{W}},\hat{\boldsymbol{W}}'\in[L^{2}(\Omega)]^{S}$
	with $\|\hat{\boldsymbol{W}}-\hat{\boldsymbol{Y}}\|_{[L^{2}(\Omega)]^{S}}\leq\kappa$,
	$\|\hat{\boldsymbol{W}}'-\hat{\boldsymbol{Y}}\|_{[L^{2}(\Omega)]^{S}}\leq\kappa$,
	where $C_{\kappa,\beta,\sigma_{\hat{\boldsymbol{Y}}}}
	:={2({\kappa+\beta})/\sigma_{\hat{\boldsymbol{Y}}}}$. 
\end{lemma}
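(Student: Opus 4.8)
The plan is to establish the bound in two stages: first a purely algebraic estimate for the norm $\|(I-P_{\hat{\boldsymbol{W}}'})P_{\hat{\boldsymbol{W}}}\|$ in terms of the perturbation $\|\hat{\boldsymbol{W}}-\hat{\boldsymbol{W}}'\|$ and the smallest singular value of the family near $\hat{\boldsymbol{Y}}$, then a calibration of the threshold $\overline\kappa$ so that the resulting constant times $2\kappa$ stays below $1$.

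First I would record that $P_{\hat{\boldsymbol{W}}}$ is the $[L^2(\Omega)]^S$-orthogonal projection onto $\mathrm{span}_{\mathbb{R}}\{\hat W_1,\dots,\hat W_S\}$, hence it depends only on the span and not on the particular tuple; nonetheless the quantitative estimate must be phrased through the tuples. The key inequality is a Wedin-type bound: for two closed subspaces that are graphs of ``nearby'' frames, $\|(I-P')P\| \le \|P - P'\|$, and $\|P-P'\|$ is controlled by the relative perturbation of the generating matrices. Concretely, writing $B=\mathbb{E}[\hat{\boldsymbol{W}}\hat{\boldsymbol{W}}^\top]$ (the Gram matrix, which equals $\langle\hat{\boldsymbol{W}},\hat{\boldsymbol{W}}^\top\rangle$ in the $L^2(\Omega)$ inner product) and similarly $B'$ for $\hat{\boldsymbol{W}}'$, one has $P_{\hat{\boldsymbol{W}}}f = \mathbb{E}[f\hat{\boldsymbol{W}}^\top] B^{-1}\hat{\boldsymbol{W}}$, so the operator $(I-P_{\hat{\boldsymbol{W}}'})P_{\hat{\boldsymbol{W}}}$ can be written out explicitly and its norm estimated by $\|B^{-1/2}\| \cdot \|\hat{\boldsymbol{W}}-\hat{\boldsymbol{W}}'\|_{[L^2(\Omega)]^S} \cdot (\text{something of order } \|\hat{\boldsymbol{W}}\|,\|\hat{\boldsymbol{W}}'\|)$, since $(I-P_{\hat{\boldsymbol{W}}'})$ kills any component already in $\mathrm{span}\{\hat{\boldsymbol{W}}'\}$, leaving only the ``difference'' $\hat{\boldsymbol{W}}-\hat{\boldsymbol{W}}'$ to push things out of the range.

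Next I would control the smallest eigenvalue of $B$ under the $\kappa$-perturbation. Since $\|\hat{\boldsymbol{W}}-\hat{\boldsymbol{Y}}\|_{[L^2(\Omega)]^S}\le\kappa$, a standard perturbation estimate for Gram matrices gives $\lambda_{\min}(B) \ge \sigma_{\hat{\boldsymbol{Y}}} - 2\kappa\|\hat{\boldsymbol{Y}}\| - \kappa^2 \ge \sigma_{\hat{\boldsymbol{Y}}} - 2\kappa\beta - \kappa^2$, and the choice $\overline\kappa = \tfrac12(-\beta+\sqrt{\beta^2+\sigma_{\hat{\boldsymbol{Y}}}})$ is precisely the positive root of $\kappa^2 + 2\beta\kappa - \tfrac12\sigma_{\hat{\boldsymbol{Y}}} = 0$ (I would double-check the factor: it makes $\sigma_{\hat{\boldsymbol{Y}}} - 2\kappa\beta - \kappa^2 \ge \tfrac12\sigma_{\hat{\boldsymbol{Y}}}$ for $\kappa\le\overline\kappa$), so for $\kappa<\overline\kappa$ one has $\lambda_{\min}(B)\ge\tfrac12\sigma_{\hat{\boldsymbol{Y}}}>0$, hence $\|B^{-1/2}\|\le\sqrt{2/\sigma_{\hat{\boldsymbol{Y}}}}$; similarly $\|\hat{\boldsymbol{W}}\|,\|\hat{\boldsymbol{W}}'\|\le\kappa+\beta$. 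Plugging these into the Wedin bound yields $\|(I-P_{\hat{\boldsymbol{W}}'})P_{\hat{\boldsymbol{W}}}\| \le \tfrac{2(\kappa+\beta)}{\sigma_{\hat{\boldsymbol{Y}}}}\|\hat{\boldsymbol{W}}-\hat{\boldsymbol{W}}'\|_{[L^2(\Omega)]^S}$, which is exactly $C_{\kappa,\beta,\sigma_{\hat{\boldsymbol{Y}}}}$ as defined. Finally, since $\|\hat{\boldsymbol{W}}-\hat{\boldsymbol{W}}'\|\le 2\kappa$, the right-hand side is at most $\tfrac{4\kappa(\kappa+\beta)}{\sigma_{\hat{\boldsymbol{Y}}}}$, and one checks this is $<1$ for $\kappa<\overline\kappa$ using again that $\overline\kappa$ solves the quadratic $\kappa^2+2\beta\kappa=\tfrac12\sigma_{\hat{\boldsymbol{Y}}}$, so $4\kappa(\kappa+\beta) < 4\overline\kappa(\overline\kappa+\beta) = 4\cdot\tfrac12\sigma_{\hat{\boldsymbol{Y}}}\cdot(\text{check})$ — I would verify the constant here carefully, as this is where an off-by-a-factor error is most likely.

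The main obstacle I anticipate is the first step: getting the clean Wedin-type inequality $\|(I-P_{\hat{\boldsymbol{W}}'})P_{\hat{\boldsymbol{W}}}\| \le \|B^{-1/2}\|\,\|\hat{\boldsymbol{W}}-\hat{\boldsymbol{W}}'\|\,\cdot(\text{norm factor})$ with the right constant, because the projections are onto subspaces spanned by non-orthonormal frames, so one cannot directly cite the matrix $\sin\Theta$ theorems; instead one writes $(I-P_{\hat{\boldsymbol{W}}'})P_{\hat{\boldsymbol{W}}}f = (I-P_{\hat{\boldsymbol{W}}'})\big(\mathbb{E}[f\hat{\boldsymbol{W}}^\top]B^{-1}\hat{\boldsymbol{W}}\big)$ and exploits $(I-P_{\hat{\boldsymbol{W}}'})\hat{\boldsymbol{W}}' = 0$ to replace $\hat{\boldsymbol{W}}$ by $\hat{\boldsymbol{W}}-\hat{\boldsymbol{W}}'$ inside, then bounds $\|\mathbb{E}[f\hat{\boldsymbol{W}}^\top]B^{-1}\|$ by $\|f\|\,\|\hat{\boldsymbol{W}}\|\,\|B^{-1/2}\|\cdot\|B^{-1/2}\|\|\hat{\boldsymbol{W}}\|^{-1}$-type manipulations. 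Everything downstream is routine quadratic-formula bookkeeping, but the bookkeeping does need to match the precise constant $2(\kappa+\beta)/\sigma_{\hat{\boldsymbol{Y}}}$ claimed, so I would keep track of every factor of $2$.
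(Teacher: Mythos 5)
Your approach is essentially the paper's: invoke Wedin's identity
\[
(I-P_{\hat{\boldsymbol{W}}'})P_{\hat{\boldsymbol{W}}}g=(I-P_{\hat{\boldsymbol{W}}'})(\hat{\boldsymbol{W}}-\hat{\boldsymbol{W}}')^{\top}\big(\mathbb{E}[\hat{\boldsymbol{W}}\hat{\boldsymbol{W}}^{\top}]\big)^{-1}\mathbb{E}[\hat{\boldsymbol{W}}g],
\]
use a Weyl-type Gram-matrix perturbation estimate to get $\sigma_{\hat{\boldsymbol{W}}}>\sigma_{\hat{\boldsymbol{Y}}}/2$, and calibrate $\overline{\kappa}$ so that $2\kappa\cdot C_{\kappa,\beta,\sigma_{\hat{\boldsymbol{Y}}}}<1$. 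The structure is correct, and the constant $2(\kappa+\beta)/\sigma_{\hat{\boldsymbol{Y}}}$ comes out right.

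The gap is exactly where you flagged it: your identification of the quadratic that $\overline{\kappa}$ solves is wrong. You wrote that $\overline{\kappa}=\tfrac12(-\beta+\sqrt{\beta^2+\sigma_{\hat{\boldsymbol{Y}}}})$ is the positive root of $\kappa^2+2\beta\kappa-\tfrac12\sigma_{\hat{\boldsymbol{Y}}}=0$, but that root is $-\beta+\sqrt{\beta^2+\sigma_{\hat{\boldsymbol{Y}}}/2}$, which is strictly larger. In fact $\overline{\kappa}$ is the positive root of $4\kappa^2+4\beta\kappa-\sigma_{\hat{\boldsymbol{Y}}}=0$, i.e.\ $\overline{\kappa}^2+\beta\overline{\kappa}=\sigma_{\hat{\boldsymbol{Y}}}/4$. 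This matters only in the closing $<1$ step: there you need $4\kappa(\kappa+\beta)<\sigma_{\hat{\boldsymbol{Y}}}$, and this is immediate from the correct relation $4\overline{\kappa}(\overline{\kappa}+\beta)=\sigma_{\hat{\boldsymbol{Y}}}$; with your quadratic one would get $4\overline{\kappa}(\overline{\kappa}+\beta)=4(\tfrac12\sigma_{\hat{\boldsymbol{Y}}}-\overline{\kappa}\beta)=2\sigma_{\hat{\boldsymbol{Y}}}-4\overline{\kappa}\beta$, which need not be below $\sigma_{\hat{\boldsymbol{Y}}}$, so the argument does not close. Your earlier use, $\lambda_{\min}(B)\geq\sigma_{\hat{\boldsymbol{Y}}}/2$, is unaffected: $\kappa<\overline{\kappa}$ gives $\kappa^2+\kappa\beta<\sigma_{\hat{\boldsymbol{Y}}}/4$, hence $\kappa\beta<\sigma_{\hat{\boldsymbol{Y}}}/4$, hence $\kappa^2+2\kappa\beta<\sigma_{\hat{\boldsymbol{Y}}}/2$, which is all that step needs.
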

\begin{proof}
	We first show that the smallest eigenvalue $\sigma_{\hat{\boldsymbol{W}}}$
	of the Gram matrix $\mathbb{E}[\hat{\boldsymbol{W}}\hat{\boldsymbol{W}}^{\top}]$
	is positive, and thus in particular $\mathbb{E}[\hat{\boldsymbol{W}}\hat{\boldsymbol{W}}^{\top}]$
	is non-singular. {We have
		\[
		\frac{-\beta+\sqrt{\beta^{2}+\frac{\sigma_{\hat{\boldsymbol{Y}}}}{2}}}{ \frac{1}{2}(-\beta+\sqrt{\beta^{2}+{\sigma_{\hat{\boldsymbol{Y}}}}})}
		\geq \frac2{\sigma_{\hat{\boldsymbol{Y}}}}\Big(\beta^2+\frac{\sigma_{\hat{\boldsymbol{Y}}}}{2}-\beta^2\Big)=
		1,
		\]
		and thus the assumption on $\kappa$ implies 
		$\kappa^{2}+2\kappa\beta<\frac{\sigma_{\hat{\boldsymbol{Y}}}}{2}$.
	}
	On the other hand, we have
	\[
	\|\mathbb{E}[\hat{\boldsymbol{W}}\hat{\boldsymbol{W}}^{\top}]-\mathbb{E}[\hat{\boldsymbol{Y}}\hat{\boldsymbol{Y}}^{\top}]\|_{\mathrm{F}}\leq\|\hat{\boldsymbol{W}}\|_{[L^{2}(\Omega)]^{S}}\kappa+\kappa\beta\leq(\kappa+\beta)\kappa+\kappa\beta.
	\]
	Therefore, we obtain $\|\mathbb{E}[\hat{\boldsymbol{W}}\hat{\boldsymbol{W}}^{\top}]-\mathbb{E}[\hat{\boldsymbol{Y}}\hat{\boldsymbol{Y}}^{\top}]\|_{\mathrm{F}}<\frac{\sigma_{\hat{\boldsymbol{Y}}}}{2}$.
	From the well-known inequality $|\sigma_{\hat{\boldsymbol{Y}}}-\sigma_{\hat{\boldsymbol{W}}}|\leq\|\mathbb{E}[\hat{\boldsymbol{Y}}\hat{\boldsymbol{Y}}^{\top}]-\mathbb{E}[\hat{\boldsymbol{W}}\hat{\boldsymbol{W}}^{\top}]\|_{\mathrm{F}}$,
	see, e.g.\ \cite[Corollary 7.3.5]{Horn.R_Johnson_book_2013_2nd},
	we conclude
	\begin{equation}
	0<\frac{\sigma_{\hat{\boldsymbol{Y}}}}{2}<\sigma_{\hat{\boldsymbol{W}}}.\label{eq:lwbd-sigma-W}
	\end{equation}
	Next, we note that the identity
	\[
	(I-P_{\hat{\boldsymbol{W}}'})P_{\hat{\boldsymbol{W}}}g=(I-P_{\hat{\boldsymbol{W}}'})(\hat{\boldsymbol{W}}-\hat{\boldsymbol{W}}')^{\top}\big(\mathbb{E}[\hat{\boldsymbol{W}}\hat{\boldsymbol{W}}^{\top}]\big)^{-1}\mathbb{E}[\hat{\boldsymbol{W}}g]
	\]
	holds for any $g\in L^{2}(\Omega)$: indeed, we have
	\[
	(I-P_{\hat{\boldsymbol{W}}'})(\hat{\boldsymbol{W}}-\hat{\boldsymbol{W}}')^{\top}=(\hat{\boldsymbol{W}}-\hat{\boldsymbol{W}}')^{\top}-P_{\hat{\boldsymbol{W}}'}\hat{\boldsymbol{W}}^{\top}+(\hat{\boldsymbol{W}}')^{\top}=(I-P_{\hat{\boldsymbol{W}}'})\hat{\boldsymbol{W}}^{\top},
	\]
	but $\hat{\boldsymbol{W}}^{\top}\big(\mathbb{E}[\hat{\boldsymbol{W}}\hat{\boldsymbol{W}}^{\top}]\big)^{-1}\mathbb{E}[\hat{\boldsymbol{W}}g]=P_{\hat{\boldsymbol{W}}}g$.
	This type of identity was shown by Wedin in the finite dimensional
	setting, see \cite[(4.2)]{Wedin.P_1983_angles}. In view of this identity,
	the first inequality in \eqref{eq:Wedin-less-1} can be shown as
	\begin{align}
	\|(\hat{\boldsymbol{W}}-\hat{\boldsymbol{W}}'&)^{\top}  \big(\mathbb{E}[\hat{\boldsymbol{W}}\hat{\boldsymbol{W}}^{\top}]\big)^{-1}\mathbb{E}[\hat{\boldsymbol{W}}\boldsymbol{g}^{\top}]\|_{[L^{2}(\Omega)]^{S}}\nonumber \\
	& 
	\leq\|(\hat{\boldsymbol{W}}-\hat{\boldsymbol{W}}')\|_{[L^{2}(\Omega)]^{S}}
	\|\big(\mathbb{E}[\hat{\boldsymbol{W}}\hat{\boldsymbol{W}}^{\top}]\big)^{-1}\|_{{2}}
	\|\hat{\boldsymbol{W}}\|_{[L^{2}(\Omega)]^{S}}\|\boldsymbol{g}\|_{[L^{2}(\Omega)]^{S}}\nonumber \\
	& \leq\|(\hat{\boldsymbol{W}}-\hat{\boldsymbol{W}}')\|_{[L^{2}(\Omega)]^{S}}
	\frac{{1}}{\sigma_{\hat{\boldsymbol{W}}}}
	(\kappa+\beta)\|\boldsymbol{g}\|_{[L^{2}(\Omega)]^{S}}\nonumber \\
	& \leq\|(\hat{\boldsymbol{W}}-\hat{\boldsymbol{W}}')\|_{[L^{2}(\Omega)]^{S}}\frac{{2}}{\sigma_{\hat{\boldsymbol{Y}}}}(\kappa+\beta)\|\boldsymbol{g}\|_{[L^{2}(\Omega)]^{S}},\label{eq:Wedin-lem-last-touch}
	\end{align}
	where we used the assumption on $\hat{\boldsymbol{W}},\hat{\boldsymbol{W}}'$
	and \eqref{eq:lwbd-sigma-W}. Finally, we apply the inequality $\|(\hat{\boldsymbol{W}}-\hat{\boldsymbol{W}}')\|_{[L^{2}(\Omega)]^{S}}\leq2\kappa$
	to \eqref{eq:Wedin-lem-last-touch}. Then, noting that the assumption
	on $\kappa$ implies $\kappa^{2}+\beta\kappa<\frac{\sigma_{\hat{\boldsymbol{Y}}}}{{4}}$
	we have
	\[
	\|(\hat{\boldsymbol{W}}-\hat{\boldsymbol{W}}')\|_{[L^{2}(\Omega)]^{S}}
	\Big(\frac{{2}}{\sigma_{\hat{\boldsymbol{Y}}}}\Big)(\kappa+\beta)
	\leq\Big(\frac{{4}}{\sigma_{\hat{\boldsymbol{Y}}}}\Big)(\kappa^{2}+\kappa\beta)<1,
	\]
	which completes the proof.
	\end{proof}
\begin{lemma}\label{lem:lip-projY}
	Under the assumptions of Lemma~\ref{lem:Wedin-less-1},
	we have
	\[
	\|P_{\hat{\boldsymbol{W}}}-P_{\hat{\boldsymbol{W}}'}\|_{[L^{2}(\Omega)]^{S}\to[L^{2}(\Omega)]^{S}}\leq C_{\kappa,\beta,\sigma_{\hat{\boldsymbol{Y}}}}\|\hat{\boldsymbol{W}}-\hat{\boldsymbol{W}}'\|_{[L^{2}(\Omega)]^{S}}
	\]
	for any $\hat{\boldsymbol{W}},\hat{\boldsymbol{W}}'\in[L^{2}(\Omega)]^{S}$
	with $\|\hat{\boldsymbol{W}}-\hat{\boldsymbol{Y}}\|_{[L^{2}(\Omega)]^{S}}\leq\kappa$,
	$\|\hat{\boldsymbol{W}}'-\hat{\boldsymbol{Y}}\|_{[L^{2}(\Omega)]^{S}}\leq\kappa$,
	where $\kappa\in(0,\overline{\kappa}(\sigma_{\hat{\boldsymbol{Y}}},\beta))$
	and $C_{\kappa,\beta,\sigma_{\hat{\boldsymbol{Y}}}}$ are as in Lemma~\ref{lem:Wedin-less-1}.
\end{lemma}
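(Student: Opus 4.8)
The plan is to deduce the two-sided bound on $\|P_{\hat{\boldsymbol{W}}}-P_{\hat{\boldsymbol{W}}'}\|$ from the one-sided estimate of Lemma~\ref{lem:Wedin-less-1}, organising the argument so that no spurious factor of $2$ appears. Under the hypotheses of Lemma~\ref{lem:Wedin-less-1} the bound \eqref{eq:lwbd-sigma-W} established in its proof shows that the Gram matrices $\mathbb{E}[\hat{\boldsymbol{W}}\hat{\boldsymbol{W}}^{\top}]$ and $\mathbb{E}[\hat{\boldsymbol{W}}'(\hat{\boldsymbol{W}}')^{\top}]$ are nonsingular, so $\{\hat{W}_j\}_{j=1}^S$ and $\{\hat{W}'_j\}_{j=1}^S$ are linearly independent and $P_{\hat{\boldsymbol{W}}},P_{\hat{\boldsymbol{W}}'}$ are well defined; in particular they are self-adjoint idempotents on $[L^2(\Omega)]^S$, and so are $I-P_{\hat{\boldsymbol{W}}},\,I-P_{\hat{\boldsymbol{W}}'}$.

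The key algebraic identity, valid for any pair of idempotents, is
\[
P_{\hat{\boldsymbol{W}}}-P_{\hat{\boldsymbol{W}}'}=P_{\hat{\boldsymbol{W}}}(I-P_{\hat{\boldsymbol{W}}'})-(I-P_{\hat{\boldsymbol{W}}})P_{\hat{\boldsymbol{W}}'}.
\]
Applied to $\boldsymbol{g}\in[L^2(\Omega)]^S$, the first term lies in $\mathrm{range}(P_{\hat{\boldsymbol{W}}})$ and the second in $\mathrm{range}(I-P_{\hat{\boldsymbol{W}}})=\mathrm{range}(P_{\hat{\boldsymbol{W}}})^{\perp}$, so the two pieces are orthogonal in $[L^2(\Omega)]^S$ and the Pythagorean theorem gives
\[
\|(P_{\hat{\boldsymbol{W}}}-P_{\hat{\boldsymbol{W}}'})\boldsymbol{g}\|_{[L^2(\Omega)]^S}^2=\|P_{\hat{\boldsymbol{W}}}(I-P_{\hat{\boldsymbol{W}}'})\boldsymbol{g}\|_{[L^2(\Omega)]^S}^2+\|(I-P_{\hat{\boldsymbol{W}}})P_{\hat{\boldsymbol{W}}'}\boldsymbol{g}\|_{[L^2(\Omega)]^S}^2.
\]
This replaces the lossy triangle inequality and is what keeps the constant sharp (equivalently, it is the classical identity $\|P-Q\|=\max(\|P(I-Q)\|,\|(I-P)Q\|)$ for orthogonal projections). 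To bound the first summand I would use idempotency of $I-P_{\hat{\boldsymbol{W}}'}$ to write $P_{\hat{\boldsymbol{W}}}(I-P_{\hat{\boldsymbol{W}}'})\boldsymbol{g}=\big[P_{\hat{\boldsymbol{W}}}(I-P_{\hat{\boldsymbol{W}}'})\big](I-P_{\hat{\boldsymbol{W}}'})\boldsymbol{g}$; since $\|P_{\hat{\boldsymbol{W}}}(I-P_{\hat{\boldsymbol{W}}'})\|=\|(I-P_{\hat{\boldsymbol{W}}'})P_{\hat{\boldsymbol{W}}}\|$ by self-adjointness, Lemma~\ref{lem:Wedin-less-1} bounds this operator norm by $C_{\kappa,\beta,\sigma_{\hat{\boldsymbol{Y}}}}\|\hat{\boldsymbol{W}}-\hat{\boldsymbol{W}}'\|_{[L^2(\Omega)]^S}$, so
\[
\|P_{\hat{\boldsymbol{W}}}(I-P_{\hat{\boldsymbol{W}}'})\boldsymbol{g}\|_{[L^2(\Omega)]^S}\le C_{\kappa,\beta,\sigma_{\hat{\boldsymbol{Y}}}}\|\hat{\boldsymbol{W}}-\hat{\boldsymbol{W}}'\|_{[L^2(\Omega)]^S}\,\|(I-P_{\hat{\boldsymbol{W}}'})\boldsymbol{g}\|_{[L^2(\Omega)]^S}.
\]

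For the second summand I would likewise write $(I-P_{\hat{\boldsymbol{W}}})P_{\hat{\boldsymbol{W}}'}\boldsymbol{g}=\big[(I-P_{\hat{\boldsymbol{W}}})P_{\hat{\boldsymbol{W}}'}\big]P_{\hat{\boldsymbol{W}}'}\boldsymbol{g}$ and invoke Lemma~\ref{lem:Wedin-less-1} \emph{with the roles of $\hat{\boldsymbol{W}}$ and $\hat{\boldsymbol{W}}'$ interchanged}; this is legitimate because the hypotheses on the two arguments ($\|\,\cdot\,-\hat{\boldsymbol{Y}}\|_{[L^2(\Omega)]^S}\le\kappa$) are symmetric and the constant $C_{\kappa,\beta,\sigma_{\hat{\boldsymbol{Y}}}}$ does not depend on which argument plays which role. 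This gives $\|(I-P_{\hat{\boldsymbol{W}}})P_{\hat{\boldsymbol{W}}'}\boldsymbol{g}\|_{[L^2(\Omega)]^S}\le C_{\kappa,\beta,\sigma_{\hat{\boldsymbol{Y}}}}\|\hat{\boldsymbol{W}}-\hat{\boldsymbol{W}}'\|_{[L^2(\Omega)]^S}\,\|P_{\hat{\boldsymbol{W}}'}\boldsymbol{g}\|_{[L^2(\Omega)]^S}$. Substituting both bounds into the Pythagorean identity and using $\|(I-P_{\hat{\boldsymbol{W}}'})\boldsymbol{g}\|_{[L^2(\Omega)]^S}^2+\|P_{\hat{\boldsymbol{W}}'}\boldsymbol{g}\|_{[L^2(\Omega)]^S}^2=\|\boldsymbol{g}\|_{[L^2(\Omega)]^S}^2$ yields $\|(P_{\hat{\boldsymbol{W}}}-P_{\hat{\boldsymbol{W}}'})\boldsymbol{g}\|_{[L^2(\Omega)]^S}\le C_{\kappa,\beta,\sigma_{\hat{\boldsymbol{Y}}}}\|\hat{\boldsymbol{W}}-\hat{\boldsymbol{W}}'\|_{[L^2(\Omega)]^S}\|\boldsymbol{g}\|_{[L^2(\Omega)]^S}$, which is the claim after dividing by $\|\boldsymbol{g}\|_{[L^2(\Omega)]^S}$ and taking the supremum.

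The only real subtlety is the bookkeeping just described: splitting $P_{\hat{\boldsymbol{W}}}-P_{\hat{\boldsymbol{W}}'}$ into two pieces whose ranges sit in complementary subspaces, so that each per-piece estimate from Lemma~\ref{lem:Wedin-less-1} is absorbed without loss of a constant, and verifying that Lemma~\ref{lem:Wedin-less-1} may indeed be applied in its ``swapped'' form. Everything else is routine manipulation of orthogonal projections.
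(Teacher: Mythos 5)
Your proof is correct, but it takes a genuinely different route from the paper's. The paper reduces the two-sided difference to the one-sided quantity by invoking Kato's perturbation lemma for pairs of orthogonal projections, namely the equality $\|P_{\hat{\boldsymbol{W}}}-P_{\hat{\boldsymbol{W}}'}\|=\|(I-P_{\hat{\boldsymbol{W}}'})P_{\hat{\boldsymbol{W}}}\|$, which requires first verifying that $P_{\hat{\boldsymbol{W}}'}$ restricted to $\mathrm{Im}(P_{\hat{\boldsymbol{W}}})$ maps onto $\mathrm{Im}(P_{\hat{\boldsymbol{W}}'})$; that injectivity argument uses \eqref{eq:Wedin-less-1} and the equality of dimensions. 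You instead decompose $P_{\hat{\boldsymbol{W}}}-P_{\hat{\boldsymbol{W}}'}=P_{\hat{\boldsymbol{W}}}(I-P_{\hat{\boldsymbol{W}}'})-(I-P_{\hat{\boldsymbol{W}}})P_{\hat{\boldsymbol{W}}'}$, exploit orthogonality of the two summands to apply Pythagoras, and then use Lemma~\ref{lem:Wedin-less-1} twice (once in the swapped form, which is indeed permitted since the hypotheses are symmetric in $\hat{\boldsymbol{W}}$ and $\hat{\boldsymbol{W}}'$). Both arguments arrive at the same constant $C_{\kappa,\beta,\sigma_{\hat{\boldsymbol{Y}}}}$. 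What your route buys is self-containedness and elementarity: you avoid the external citation and the rank/injectivity verification, proving on the fly the special case of Kato's identity you actually need. What the paper's route buys is brevity in exposition, since once the rank identity is known the reduction to the one-sided norm is a one-line quotation, and it localises all the projection-geometry into the cited lemma rather than redoing it by hand.
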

\begin{proof}
	In view of Lemma~\ref{lem:Wedin-less-1}, it suffices
	to show $\|P_{\hat{\boldsymbol{W}}}-P_{\hat{\boldsymbol{W}}'}\|_{[L^{2}(\Omega)]^{S}\to[L^{2}(\Omega)]^{S}}=\|(I-P_{\hat{\boldsymbol{W}}'})P_{\hat{\boldsymbol{W}}}\|_{[L^{2}(\Omega)]^{S}\to[L^{2}(\Omega)]^{S}}$.
	We will invoke a perturbation result on pairs of projections, \cite[Lemma 221]{Kato.T_1958_two_proj},
	see also \cite[Theorem I.6.34]{Kato.T_book_1995_reprint}. In this
	regard, first we will show the following identity of finite dimensional
	vector subspaces
	\begin{equation}
	\mathrm{Im}(P_{\hat{\boldsymbol{W}}'}|_{\mathrm{Im}(P_{\hat{\boldsymbol{W}}})}):=P_{\hat{\boldsymbol{W}}'}\big(P_{\hat{\boldsymbol{W}}}([L^{2}(\Omega)]^{S})\big)=P_{\hat{\boldsymbol{W}}'}([L^{2}(\Omega)]^{S})=:\mathrm{Im}(P_{\hat{\boldsymbol{W}}'}).\label{eq:image-identity}
	\end{equation}
	Since $\mathrm{Im}(P_{\hat{\boldsymbol{W}}'}|_{\mathrm{Im}(P_{\hat{\boldsymbol{W}}})})\subset\mathrm{Im}(P_{\hat{\boldsymbol{W}}'})$,
	it suffices to show that $\mathrm{Im}(P_{\hat{\boldsymbol{W}}'}|_{\mathrm{Im}(P_{\hat{\boldsymbol{W}}})})$
	cannot be a proper subspace of $\mathrm{Im}(P_{\hat{\boldsymbol{W}}'})$.
	To see this, we will verify that the dimension of $\mathrm{Im}(P_{\hat{\boldsymbol{W}}'}|_{\mathrm{Im}(P_{\hat{\boldsymbol{W}}})})$
	is the same as $\mathrm{Im}(P_{\hat{\boldsymbol{W}}'})$. We now note
	that, in view of \eqref{eq:lwbd-sigma-W} in the proof of Lemma~\ref{lem:Wedin-less-1},
	we have
	\[
	\mathrm{dim}(\mathrm{Im}(P_{\hat{\boldsymbol{W}}'}))=S=\mathrm{dim}(\mathrm{Im}(P_{\hat{\boldsymbol{W}}})).
	\]
	Therefore, if the linear operator $P_{\hat{\boldsymbol{W}}'}|_{\mathrm{Im}(P_{\hat{\boldsymbol{W}}})}\colon\mathrm{Im}(P_{\hat{\boldsymbol{W}}})\to\mathrm{Im}(P_{\hat{\boldsymbol{W}}'}|_{\mathrm{Im}(P_{\hat{\boldsymbol{W}}})})$
	is a vector space isomorphism, then we have $\mathrm{dim}(\mathrm{Im}(P_{\hat{\boldsymbol{W}}'}|_{\mathrm{Im}(P_{\hat{\boldsymbol{W}}})}))=S$,
	and thus \eqref{eq:image-identity} will follow. It suffices to show
	the injectivity. For any $\boldsymbol{x}=P_{\hat{\boldsymbol{W}}}\boldsymbol{x}\in\mathrm{Im}(P_{\hat{\boldsymbol{W}}})$,
	with {$d:=\|(I-P_{\hat{\boldsymbol{W}}'})P_{\hat{\boldsymbol{W}}}\|_{[L^{2}(\Omega)]^{S}\to[L^{2}(\Omega)]^{S}}$} we have
	\begin{align*}
	\|\boldsymbol{x}-P_{\hat{\boldsymbol{W}}'}\boldsymbol{x}\|_{[L^{2}(\Omega)]^{S}} & =\|P_{\hat{\boldsymbol{W}}}\boldsymbol{x}-P_{\hat{\boldsymbol{W}}'}P_{\hat{\boldsymbol{W}}}\boldsymbol{x}\|_{[L^{2}(\Omega)]^{S}}
	\leq d\|\boldsymbol{x}\|_{[L^{2}(\Omega)]^{S}},
	\end{align*}
	where from \eqref{eq:Wedin-less-1} we have $d<1$. Thus, we get 
	$\|\boldsymbol{x}\|_{[L^{2}(\Omega)]^{S}}{\leq}\frac{1}{1-d}\|P_{\hat{\boldsymbol{W}}'}\boldsymbol{x}\|_{[L^{2}(\Omega)]^{S}}$,
	which shows the injectivity. Hence we have \eqref{eq:image-identity}.
	
	Finally, in view of \cite[i) Lemma 221]{Kato.T_1958_two_proj},
	we have
	\[
	\|P_{\hat{\boldsymbol{W}}}-P_{\hat{\boldsymbol{W}}'}\|_{[L^{2}(\Omega)]^{S}\to[L^{2}(\Omega)]^{S}}=\|(I-P_{\hat{\boldsymbol{W}}'})P_{\hat{\boldsymbol{W}}}\|_{[L^{2}(\Omega)]^{S}\to[L^{2}(\Omega)]^{S}},
	\]
	and the statement follows from Lemma~\ref{lem:Wedin-less-1}.
	\end{proof}
Next, we derive a local Lipschitz continuity of the inverse of the Gram matrix
$Z_{\hat{\boldsymbol{U}}}=\langle\hat{\boldsymbol{U}},\hat{\boldsymbol{U}}^{\top}\rangle$.
\begin{lemma}
	\label{lem:pre-lip-ZU}
	Suppose that  $\hat{\boldsymbol{U}},\hat{\boldsymbol{U}}'\in[\mathcal{H}]^{S}$
	are linearly independent and that for some $\tilde{\alpha}>0$ we have  $\max\{\|\hat{\boldsymbol{U}}\|_{[\mathcal{H}]^{S}},\|\hat{\boldsymbol{U}}'\|_{[\mathcal{H}]^{S}}\}\leq\tilde{\alpha}$. 
	Then, it holds
	\[{\|Z_{\hat{\boldsymbol{U}}}^{-1}-Z_{\hat{\boldsymbol{U}}'}^{-1}\|_{{2}}\leq C_{\tilde{\alpha},S}(\|Z_{\hat{\boldsymbol{U}}}^{-1}\|_{{2}}^{2}+\|Z_{\hat{\boldsymbol{U}'}}^{-1}\|_{{2}}^{2})\|\hat{\boldsymbol{U}}-\hat{\boldsymbol{U}}'\|_{[\mathcal{H}]^{S}}}\]
	with a constant $C_{\tilde{\alpha},S}>0$.
\end{lemma}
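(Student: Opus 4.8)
The plan is to reduce the estimate to the standard resolvent-type identity for matrix inverses together with an entrywise bound on the Gram matrices. Since $\hat{\boldsymbol{U}}$ and $\hat{\boldsymbol{U}}'$ are linearly independent, both $Z_{\hat{\boldsymbol{U}}}$ and $Z_{\hat{\boldsymbol{U}}'}$ are symmetric positive definite, hence invertible, and one may write
\[
Z_{\hat{\boldsymbol{U}}}^{-1}-Z_{\hat{\boldsymbol{U}}'}^{-1}=Z_{\hat{\boldsymbol{U}}}^{-1}\big(Z_{\hat{\boldsymbol{U}}'}-Z_{\hat{\boldsymbol{U}}}\big)Z_{\hat{\boldsymbol{U}}'}^{-1}.
\]
Taking spectral norms and using submultiplicativity gives
\[
\|Z_{\hat{\boldsymbol{U}}}^{-1}-Z_{\hat{\boldsymbol{U}}'}^{-1}\|_{2}\le\|Z_{\hat{\boldsymbol{U}}}^{-1}\|_{2}\,\|Z_{\hat{\boldsymbol{U}}'}^{-1}\|_{2}\,\|Z_{\hat{\boldsymbol{U}}}-Z_{\hat{\boldsymbol{U}}'}\|_{2},
\]
and the elementary inequality $ab\le\frac12(a^{2}+b^{2})$ turns the product $\|Z_{\hat{\boldsymbol{U}}}^{-1}\|_{2}\|Z_{\hat{\boldsymbol{U}}'}^{-1}\|_{2}$ into $\frac12(\|Z_{\hat{\boldsymbol{U}}}^{-1}\|_{2}^{2}+\|Z_{\hat{\boldsymbol{U}}'}^{-1}\|_{2}^{2})$, which is precisely the factor appearing in the statement.

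It then remains to bound $\|Z_{\hat{\boldsymbol{U}}}-Z_{\hat{\boldsymbol{U}}'}\|_{2}$ by $\|\hat{\boldsymbol{U}}-\hat{\boldsymbol{U}}'\|_{[\mathcal{H}]^{S}}$. For this I would argue entrywise: writing $U_{j},U_{j}'$ for the components of $\hat{\boldsymbol{U}},\hat{\boldsymbol{U}}'$, we have the telescoping identity
\[
\langle U_{j},U_{k}\rangle-\langle U_{j}',U_{k}'\rangle=\langle U_{j}-U_{j}',U_{k}\rangle+\langle U_{j}',U_{k}-U_{k}'\rangle,
\]
so that Cauchy--Schwarz together with $\|U_{j}\|_{\mathcal{H}},\|U_{j}'\|_{\mathcal{H}}\le\tilde{\alpha}$ (which follows from $\|\hat{\boldsymbol{U}}\|_{[\mathcal{H}]^{S}},\|\hat{\boldsymbol{U}}'\|_{[\mathcal{H}]^{S}}\le\tilde{\alpha}$) yields
\[
|(Z_{\hat{\boldsymbol{U}}}-Z_{\hat{\boldsymbol{U}}'})_{jk}|\le\tilde{\alpha}\big(\|U_{j}-U_{j}'\|_{\mathcal{H}}+\|U_{k}-U_{k}'\|_{\mathcal{H}}\big).
\]
Bounding the spectral norm by the Frobenius norm and summing the squares of these entries over $j,k$ (each of the indices $j$ and $k$ occurring $S$ times, and using $\sum_{j}\|U_{j}-U_{j}'\|_{\mathcal{H}}^{2}=\|\hat{\boldsymbol{U}}-\hat{\boldsymbol{U}}'\|_{[\mathcal{H}]^{S}}^{2}$) gives $\|Z_{\hat{\boldsymbol{U}}}-Z_{\hat{\boldsymbol{U}}'}\|_{2}\le 2\sqrt{S}\,\tilde{\alpha}\,\|\hat{\boldsymbol{U}}-\hat{\boldsymbol{U}}'\|_{[\mathcal{H}]^{S}}$. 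Combining the three displays establishes the claim with, say, $C_{\tilde{\alpha},S}=\sqrt{S}\,\tilde{\alpha}$.

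I do not expect a genuine obstacle here; this is a routine perturbation estimate. The only point worth a little care is the bookkeeping that produces precisely the $\|Z_{\hat{\boldsymbol{U}}}^{-1}\|_{2}^{2}+\|Z_{\hat{\boldsymbol{U}}'}^{-1}\|_{2}^{2}$ structure (via AM--GM, rather than leaving the mixed product $\|Z_{\hat{\boldsymbol{U}}}^{-1}\|_{2}\|Z_{\hat{\boldsymbol{U}}'}^{-1}\|_{2}$), since in the later application to the local Lipschitz continuity of $G$ the two inverse-Gram norms are controlled separately by a single bound valid near a reference point $(\hat{\boldsymbol{U}},\hat{\boldsymbol{Y}})$.
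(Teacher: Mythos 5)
Your proof is correct and is in fact simpler than the paper's argument for the key step, namely bounding $\|Z_{\hat{\boldsymbol{U}}}^{-1}-Z_{\hat{\boldsymbol{U}}'}^{-1}\|_{2}$ in terms of $\|Z_{\hat{\boldsymbol{U}}}-Z_{\hat{\boldsymbol{U}}'}\|_{2}$. The paper gets there by computing the Fr\'echet derivative of the matrix inverse map, invoking a mean value inequality (citing Coleman) to bound the difference by a supremum of $\|\tilde{Z}^{-1}\|_{2}^{2}$ over the segment $\tilde{Z}=rZ_{\hat{\boldsymbol{U}}}+(1-r)Z_{\hat{\boldsymbol{U}}'}$, and then using operator convexity of the inverse on symmetric positive definite matrices (citing Moore) to replace that supremum by $\|Z_{\hat{\boldsymbol{U}}}^{-1}\|_{2}+\|Z_{\hat{\boldsymbol{U}}'}^{-1}\|_{2}$; this yields the bound with a factor $2(\|Z_{\hat{\boldsymbol{U}}}^{-1}\|_{2}^{2}+\|Z_{\hat{\boldsymbol{U}}'}^{-1}\|_{2}^{2})$. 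You instead apply the elementary second resolvent identity $A^{-1}-B^{-1}=A^{-1}(B-A)B^{-1}$ and submultiplicativity, then AM--GM to convert $\|Z_{\hat{\boldsymbol{U}}}^{-1}\|_{2}\|Z_{\hat{\boldsymbol{U}}'}^{-1}\|_{2}$ into $\tfrac12(\|Z_{\hat{\boldsymbol{U}}}^{-1}\|_{2}^{2}+\|Z_{\hat{\boldsymbol{U}}'}^{-1}\|_{2}^{2})$. This gets the same structural form, needs no citations, uses no positive-definiteness or convexity, and even gives a sharper constant (factor $\tfrac12$ vs.\ $2$). The entrywise bound on $Z_{\hat{\boldsymbol{U}}}-Z_{\hat{\boldsymbol{U}}'}$ and the passage through the Frobenius norm are essentially the same as in the paper, and your bookkeeping (including the remark about why one wants the sum-of-squares form rather than the mixed product, for use in Lemma~\ref{lem:lip-ZU}) is correct.
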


\begin{proof}
	For components $\hat{U}_{j}$, $\hat{U}_{k}$ of $\hat{\boldsymbol{U}}$;
	and $\hat{U}'_{j}$, $\hat{U}'_{k}$ of $\hat{\boldsymbol{U}}'$,
	we have
	\[
	|\langle\hat{U}_{j},\hat{U}{}_{k}\rangle-\langle\hat{U}'_{j},\hat{U}'_{k}\rangle|\leq
	\max\{\|\hat{U}_{k}\|_{\mathcal{H}},\|\hat{U}'_{j}\|_{\mathcal{H}}\}(\|\hat{U}{}_{j}-\hat{U}'_{j}\|_{\mathcal{H}}+\|\hat{U}{}_{k}-\hat{U}'_{k}\|_{\mathcal{H}}),
	\]
	and thus there exists a constant $C'_{\tilde{\alpha},S}$ depending
	on $S$ such that 
	$
	\|Z_{\hat{\boldsymbol{U}}}-Z_{\hat{\boldsymbol{U}}'}\|_{2}
	\leq
	\|Z_{\hat{\boldsymbol{U}}}-Z_{\hat{\boldsymbol{U}}'}\|_{\mathrm{F}}\leq C'_{\tilde{\alpha},S}\|\hat{\boldsymbol{U}}-\hat{\boldsymbol{U}}'\|_{[\mathcal{H}]^{S}}$.

	Noting that the matrix $Z_{\hat{\boldsymbol{U}}}$ is non-singular
	when $\hat{\boldsymbol{U}}$ is linear independent, we recall that
	the Fréchet derivative of the mapping $\mathbb{{R}}^{S\times S}\ni B\mapsto B^{-1}=:\mathrm{Inv}(B)\in\mathbb{{R}}^{S\times S}$
	at $B\in\mathbb{{R}}^{S\times S}$ acting on $W\in\mathbb{R}^{S\times S}$
	is given by $D\mathrm{Inv}(B)[W]=-B^{-1}WB^{-1}$ (see, for example
	\cite[Appendix A.5]{Absil.P_etal_2008_book}). Then, with the notation
	\[
	\|D\mathrm{Inv}(Z_{\hat{\boldsymbol{U}}})\|_{\mathbb{R}^{S\times S}\to\mathbb{R}^{S\times S}}
	:=\max_{W\in\mathbb{R}^{S\times S}:\|W\|_{{2}}=1}\|Z_{\hat{\boldsymbol{U}}}^{-1}WZ_{\hat{\boldsymbol{U}}}^{-1}\|_{{2}},
	\]
	in view of \cite[Corollary 3.2]{Coleman.R_2012_book} we
	have
	\begin{align*}
	&\|Z_{\hat{\boldsymbol{U}}}^{-1}  -Z_{\hat{\boldsymbol{U}}'}^{-1}\|_{{2}}
	=\|\mathrm{Inv}(Z_{\hat{\boldsymbol{U}}})-\mathrm{Inv}(Z_{\hat{\boldsymbol{U}}'})\|_{{2}}\\
	&\leq
	\sup\Big\{\|D\mathrm{Inv}(\tilde{Z})\|_{\mathbb{R}^{S\times S}\to\mathbb{R}^{S\times S}}\,\Big|\,\tilde{Z}=rZ_{\hat{\boldsymbol{U}}}+(1-r)Z_{\hat{\boldsymbol{U}}'},\:r\in[0,1]\Big\}\\
	&\phantom{a little space}\times\|Z_{\hat{\boldsymbol{U}}}-Z_{\hat{\boldsymbol{U}}'}\|_{{2}}\\
	%
	&\leq  \sup\Big\{\|\tilde{Z}^{-1}\|_{{2}}^{2}\,\Big|\,\tilde{Z}=rZ_{\hat{\boldsymbol{U}}}+(1-r)Z_{\hat{\boldsymbol{U}}'},\:r\in[0,1]\Big\}\|Z_{\hat{\boldsymbol{U}}}-Z_{\hat{\boldsymbol{U}}'}\|_{{2}}.
	\end{align*}
	Now, for $r\in[0,1]$ given, since $Z_{\hat{\boldsymbol{U}}}^{-1}$ and $Z_{\hat{\boldsymbol{U}}'}^{-1}$ are symmetric positive definite, from \cite{Moore.M.H_1973_matrix_convex} we have 
	$\boldsymbol{c}^\top
	\big(rZ_{\hat{\boldsymbol{U}}}+(1-r)Z_{\hat{\boldsymbol{U}}'}\big)^{-1}
	\boldsymbol{c}
	\leq 
	r\boldsymbol{c}^\top
	Z_{\hat{\boldsymbol{U}}}^{-1}
	\boldsymbol{c}
	+
	(1-r)\boldsymbol{c}^\top
	Z_{\hat{\boldsymbol{U}}'}^{-1}
	\boldsymbol{c}$ for any $\boldsymbol{c}\in\mathbb{R}^S$, 
	and thus
	$\big\|\big(rZ_{\hat{\boldsymbol{U}}}+(1-r)Z_{\hat{\boldsymbol{U}}'}\big)^{-1}\big\|_2
	\leq \|Z_{\hat{\boldsymbol{U}}}^{-1}\|_2+\|Z_{\hat{\boldsymbol{U}}'}^{-1}\|_2
	$. Therefore, we obtain
	\[
	\|Z_{\hat{\boldsymbol{U}}}^{-1}-Z_{\hat{\boldsymbol{U}}'}^{-1}\|_{2}
	\leq 
	2(\|Z_{\hat{\boldsymbol{U}}}^{-1}\|_{{2}}^{2}+\|Z_{\hat{\boldsymbol{U}}'}^{-1}\|_{{2}}^{2})\|Z_{\hat{\boldsymbol{U}}}-Z_{\hat{\boldsymbol{U}}'}\|_{{2}}.
	\]
	It follows that
	\[
	\|Z_{\hat{\boldsymbol{U}}}^{-1}-Z_{\hat{\boldsymbol{U}}'}^{-1}\|_{{2}} \leq2(\|Z_{\hat{\boldsymbol{U}}}^{-1}\|_{{2}}^{2}+\|Z_{\hat{\boldsymbol{U}}'}^{-1}\|_{{2}}^{2})C'_{\tilde{\alpha},S}\|\hat{\boldsymbol{U}}-\hat{\boldsymbol{U}}'\|_{[\mathcal{H}]^{S}},
	\]
	which completes the proof.
	\end{proof}
As a consequence, we obtain the following.
\begin{lemma}
	\label{lem:lip-ZU}Suppose that $\hat{\boldsymbol{U}}\in[\mathcal{H}]^{S}$
	is linearly independent and $\|\hat{\boldsymbol{U}}\|_{[\mathcal{H}]^{S}}\leq\alpha$
	for some $\alpha>0$. {Then, we have $\|Z_{\hat{\boldsymbol{U}}}^{-1}\|\leq\gamma$
		for some $\gamma>0$. Further, there exists a constant $C_{\alpha,S}>0$
		that is independent of the position $\hat{\boldsymbol{U}}$ and $R=R(\hat{\boldsymbol{U}})\in(0,1]$
		such that
		\[
		\|Z_{\hat{\boldsymbol{V}}}^{-1}-Z_{\hat{\boldsymbol{V}}'}^{-1}\|\leq\gamma^{2}C_{\alpha,S}\|\hat{\boldsymbol{V}}-\hat{\boldsymbol{V}}'\|_{[\mathcal{H}]^{S}}
		\]
	}holds for any $\hat{\boldsymbol{V}},\hat{\boldsymbol{V}}'\in[\mathcal{H}]^{S}$,
	with $\|\hat{\boldsymbol{V}}-\hat{\boldsymbol{U}}\|_{[\mathcal{H}]^{S}}\leq R$,
	$\|\hat{\boldsymbol{V}}'-\hat{\boldsymbol{U}}\|_{[\mathcal{H}]^{S}}\leq R$.
\end{lemma}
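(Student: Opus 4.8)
The plan is to deduce the statement as a localisation of Lemma~\ref{lem:pre-lip-ZU}, the only genuine work being to fix a radius $R$ on which the inverse Gram matrix stays uniformly bounded and on which the constant can be made to depend only on $\alpha$ and $S$.

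First I would set $\gamma := \|Z_{\hat{\boldsymbol{U}}}^{-1}\|_{2}$. Since $\hat{\boldsymbol{U}}$ is linearly independent, the Gram matrix $Z_{\hat{\boldsymbol{U}}} = \langle\hat{\boldsymbol{U}},\hat{\boldsymbol{U}}^{\top}\rangle$ is symmetric positive definite, hence invertible, so $\gamma\in(0,\infty)$ and the first claim of the lemma holds. Its smallest eigenvalue equals $1/\gamma>0$.

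Next I would choose the radius. For any $\hat{\boldsymbol{V}}$ with $\|\hat{\boldsymbol{V}}-\hat{\boldsymbol{U}}\|_{[\mathcal{H}]^{S}}\le R\le 1$ one has $\|\hat{\boldsymbol{V}}\|_{[\mathcal{H}]^{S}}\le\alpha+1=:\tilde{\alpha}$, so the componentwise estimate used in the proof of Lemma~\ref{lem:pre-lip-ZU} gives a constant $C'_{\tilde{\alpha},S}$ depending only on $\alpha$ and $S$ with $\|Z_{\hat{\boldsymbol{V}}}-Z_{\hat{\boldsymbol{U}}}\|_{2}\le\|Z_{\hat{\boldsymbol{V}}}-Z_{\hat{\boldsymbol{U}}}\|_{\mathrm{F}}\le C'_{\tilde{\alpha},S}\|\hat{\boldsymbol{V}}-\hat{\boldsymbol{U}}\|_{[\mathcal{H}]^{S}}$. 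By the eigenvalue perturbation inequality for the smallest eigenvalue already invoked in the proof of Lemma~\ref{lem:Wedin-less-1} (Corollary~7.3.5 of Horn--Johnson), choosing $R:=\min\{1,(2\gamma C'_{\tilde{\alpha},S})^{-1}\}$ forces the smallest eigenvalue of $Z_{\hat{\boldsymbol{V}}}$ to be at least $1/(2\gamma)$ whenever $\|\hat{\boldsymbol{V}}-\hat{\boldsymbol{U}}\|_{[\mathcal{H}]^{S}}\le R$; in particular every such $\hat{\boldsymbol{V}}$ is linearly independent and satisfies $\|Z_{\hat{\boldsymbol{V}}}^{-1}\|_{2}\le 2\gamma$. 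Note that $R$ depends on $\hat{\boldsymbol{U}}$ only through $\gamma$ (and on $\alpha,S$), which is consistent with the statement.

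Finally, I would apply Lemma~\ref{lem:pre-lip-ZU} with the bound $\tilde{\alpha}=\alpha+1$ to $\hat{\boldsymbol{V}},\hat{\boldsymbol{V}}'$ in this ball: both are linearly independent with norms $\le\tilde{\alpha}$, so
\[
\|Z_{\hat{\boldsymbol{V}}}^{-1} - Z_{\hat{\boldsymbol{V}}'}^{-1}\|_{2} \leq C_{\tilde{\alpha},S}\big(\|Z_{\hat{\boldsymbol{V}}}^{-1}\|_{2}^{2} + \|Z_{\hat{\boldsymbol{V}}'}^{-1}\|_{2}^{2}\big)\|\hat{\boldsymbol{V}} - \hat{\boldsymbol{V}}'\|_{[\mathcal{H}]^{S}} \leq 8\,C_{\tilde{\alpha},S}\,\gamma^{2}\,\|\hat{\boldsymbol{V}} - \hat{\boldsymbol{V}}'\|_{[\mathcal{H}]^{S}},
\]
using $\|Z_{\hat{\boldsymbol{V}}}^{-1}\|_{2},\|Z_{\hat{\boldsymbol{V}}'}^{-1}\|_{2}\le 2\gamma$. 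Setting $C_{\alpha,S}:=8\,C_{\alpha+1,S}$, which depends only on $\alpha$ and $S$ and not on the position $\hat{\boldsymbol{U}}$, gives the assertion. I do not expect any real obstacle here; the one point requiring care is precisely that the uniform bound $\|Z_{\hat{\boldsymbol{V}}}^{-1}\|\le 2\gamma$ and the radius $R$ be arranged so that the dependence of the final constant on $\hat{\boldsymbol{U}}$ is carried entirely by the explicit $\gamma^{2}$ factor, which is why the statement is phrased that way.
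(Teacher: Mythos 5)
Your proof is correct and follows the same route as the paper: reduce to Lemma~\ref{lem:pre-lip-ZU} with $\tilde{\alpha}=\alpha+1$ after shrinking $R\le 1$ so that $\|Z_{\hat{\boldsymbol{V}}}^{-1}\|_2\le 2\gamma$ on the ball, then set $C_{\alpha,S}=8C_{\tilde{\alpha},S}$. The only difference is that you spell out the choice of $R$ explicitly via the eigenvalue perturbation bound, where the paper simply says ``for $R$ small enough,'' so your version is a slightly more detailed rendering of the same argument.
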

\begin{proof}
	Since $\|Z_{\hat{\boldsymbol{U}}}^{-1}\|_{{2}}\leq\gamma$, for $R=R(\hat{\boldsymbol{U}})\in(0,1]$
	small enough we have $\|Z_{\hat{\boldsymbol{V}}}^{-1}\|_{{2}}\leq2\gamma$
	for all $\hat{\boldsymbol{V}}\in[\mathcal{H}]^{S}$ such that $\|\hat{\boldsymbol{V}}-\hat{\boldsymbol{U}}\|_{[\mathcal{H}]^{S}}\leq R$.
	Such $\hat{\boldsymbol{V}}$ satisfies $\|\hat{\boldsymbol{V}}\|_{[\mathcal{H}]^{S}}\leq\alpha+R\leq\alpha+1$.
	Thus, with $\tilde{\alpha}:=\alpha+1$ and $C_{\alpha,S}:=8C_{\tilde{\alpha},S}$
	in Lemma~\ref{lem:pre-lip-ZU} the statement follows.
	\end{proof}
Lemmata~\ref{lem:lip-projY} and \ref{lem:lip-ZU} established above
give the following local Lipschitz continuity of the non-linear term
$G$ we need.
\begin{proposition}
	\label{prop:loc-Lip-G}Let Assumption~\ref{assu:lip-F-H} hold. Suppose
	that we have $\|Z_{{\hat{\boldsymbol{U}}}}^{-1}\|_{{2}}\leq\gamma$ 
	for {${\hat{\boldsymbol{U}}}\in[\mathcal{H}]^{S}$, and that
		${\hat{\boldsymbol{Y}}}\in[L^{2}(\Omega)]^{S}$ is $L^{2}(\Omega)$-orthonormal.}
	Then, $G:\mathcal{X}\rightarrow\mathcal{X}$ defined in \eqref{eq:def-G}
	satisfies the assumption of Proposition~\ref{prop:gen-mild-local} 
	for this $(\hat{\boldsymbol{U}},\hat{\boldsymbol{Y}})$ with
	a constant depending also on $\gamma$.
\end{proposition}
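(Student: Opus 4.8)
The goal is to verify the two hypotheses of Proposition~\ref{prop:gen-mild-local} at the point $(\hat{\boldsymbol{U}},\hat{\boldsymbol{Y}})$: a local Lipschitz bound for $G$ on a ball around $(\hat{\boldsymbol{U}},\hat{\boldsymbol{Y}})$ with constant depending only on $\alpha\ge\|\hat{\boldsymbol{U}}\|_{[\mathcal{H}]^{S}}$, $\beta\ge\|\hat{\boldsymbol{Y}}\|_{[L^{2}(\Omega)]^{S}}$ and $\gamma$; and the bound $\|G(\hat{\boldsymbol{U}},\hat{\boldsymbol{Y}})\|_{\mathcal{X}}\le C'_{\alpha,\beta}$. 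The plan is to first fix a neighbourhood small enough for the three preparatory results to apply, then handle the composed nonlinearity $F(\boldsymbol{V}^{\top}\boldsymbol{W})$, and finally estimate the two components of $G$ by telescoping.

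First I would fix the neighbourhood. Since $\hat{\boldsymbol{Y}}$ is $L^{2}(\Omega)$-orthonormal, $\mathbb{E}[\hat{\boldsymbol{Y}}\hat{\boldsymbol{Y}}^{\top}]=I$, so $\sigma_{\hat{\boldsymbol{Y}}}=1$ and $\|\hat{\boldsymbol{Y}}\|_{[L^{2}(\Omega)]^{S}}=\sqrt{S}$; put $\beta:=\sqrt{S}$. Applying Lemma~\ref{lem:lip-ZU} with $\alpha$ yields $R_{0}=R_{0}(\hat{\boldsymbol{U}})\in(0,1]$ and a constant $C_{\alpha,S}$ such that $\|Z_{\boldsymbol{V}}^{-1}\|_{2}\le 2\gamma$ and $\|Z_{\boldsymbol{V}}^{-1}-Z_{\boldsymbol{V}'}^{-1}\|_{2}\le\gamma^{2}C_{\alpha,S}\|\boldsymbol{V}-\boldsymbol{V}'\|_{[\mathcal{H}]^{S}}$ on the ball $\{\|\boldsymbol{V}-\hat{\boldsymbol{U}}\|\le R_{0}\}$; applying Lemma~\ref{lem:lip-projY} with the same $\beta$ and a fixed $\kappa\in(0,\overline{\kappa}(1,\beta))$ gives $\|P_{\boldsymbol{W}}-P_{\boldsymbol{W}'}\|\le 2(\kappa+\beta)\|\boldsymbol{W}-\boldsymbol{W}'\|$ on $\{\|\boldsymbol{W}-\hat{\boldsymbol{Y}}\|\le\kappa\}$. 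Set provisionally $r:=\min\{R_{0},\kappa,1\}$, to be shrunk once more below; note $Z_{\boldsymbol{V}}^{-1}$ exists on this ball, so $G$ is well defined there.

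Next I would treat the composed nonlinearity. The bilinear map $(\boldsymbol{V},\boldsymbol{W})\mapsto\boldsymbol{V}^{\top}\boldsymbol{W}$ satisfies $\|\boldsymbol{V}^{\top}\boldsymbol{W}\|_{L^{2}(\Omega;\mathcal{H})}\le\|\boldsymbol{V}\|_{[\mathcal{H}]^{S}}\|\boldsymbol{W}\|_{[L^{2}(\Omega)]^{S}}$ and $\|\boldsymbol{V}^{\top}\boldsymbol{W}-\boldsymbol{V}'^{\top}\boldsymbol{W}'\|\le\|\boldsymbol{V}\|\,\|\boldsymbol{W}-\boldsymbol{W}'\|+\|\boldsymbol{V}-\boldsymbol{V}'\|\,\|\boldsymbol{W}'\|$, hence on the $r$-ball it is Lipschitz with constant $\alpha+\beta+2$; shrinking $r$ we may also ensure $\|\boldsymbol{V}^{\top}\boldsymbol{W}-\hat{\boldsymbol{U}}^{\top}\hat{\boldsymbol{Y}}\|_{L^{2}(\Omega;\mathcal{H})}\le 1$ there. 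Taking $v_{0}:=\hat{\boldsymbol{U}}^{\top}\hat{\boldsymbol{Y}}$ and $q:=\alpha\sqrt{S}\ge\|v_{0}\|$, Assumption~\ref{assu:lip-F-H} produces $C:=C_{q,1}$ with $\|F(\boldsymbol{V}^{\top}\boldsymbol{W})-F(\boldsymbol{V}'^{\top}\boldsymbol{W}')\|_{L^{2}(\Omega;\mathcal{H})}\le C(\alpha+\beta+2)\|(\boldsymbol{V},\boldsymbol{W})-(\boldsymbol{V}',\boldsymbol{W}')\|_{\mathcal{X}}$ on the ball, and $\|F(\boldsymbol{V}^{\top}\boldsymbol{W})\|_{L^{2}(\Omega;\mathcal{H})}\le C'_{q}+C=:M$. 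Then, using the componentwise Cauchy--Schwarz inequalities $\|\mathbb{E}[g\boldsymbol{W}]\|_{[\mathcal{H}]^{S}}\le\|g\|_{L^{2}(\Omega;\mathcal{H})}\|\boldsymbol{W}\|_{[L^{2}(\Omega)]^{S}}$ and $\|\langle g,\boldsymbol{\phi}\rangle\|_{[L^{2}(\Omega)]^{S}}\le\|g\|_{L^{2}(\Omega;\mathcal{H})}\|\boldsymbol{\phi}\|_{[\mathcal{H}]^{S}}$, together with $\|M\boldsymbol{\phi}\|_{[\mathcal{H}]^{S}}\le\|M\|_{2}\|\boldsymbol{\phi}\|_{[\mathcal{H}]^{S}}$ and the norm $\le 1$ of $I-P_{\boldsymbol{W}}$, I would estimate the two components of $G$ by telescoping: for $G_{1}$, write $\mathbb{E}[F(\boldsymbol{V}^{\top}\boldsymbol{W})\boldsymbol{W}]-\mathbb{E}[F(\boldsymbol{V}'^{\top}\boldsymbol{W}')\boldsymbol{W}']=\mathbb{E}[(F(\boldsymbol{V}^{\top}\boldsymbol{W})-F(\boldsymbol{V}'^{\top}\boldsymbol{W}'))\boldsymbol{W}]+\mathbb{E}[F(\boldsymbol{V}'^{\top}\boldsymbol{W}')(\boldsymbol{W}-\boldsymbol{W}')]$ and bound the first term by the Lipschitz estimate above times $\beta+1$ and the second by $M\|\boldsymbol{W}-\boldsymbol{W}'\|$; for $G_{2}$, split $G_{2}(\boldsymbol{V})(\boldsymbol{W})-G_{2}(\boldsymbol{V}')(\boldsymbol{W}')$ into $(P_{\boldsymbol{W}'}-P_{\boldsymbol{W}})\langle F(\boldsymbol{V}^{\top}\boldsymbol{W}),Z_{\boldsymbol{V}}^{-1}\boldsymbol{V}\rangle$ (use Lemma~\ref{lem:lip-projY}, $M$, and $\|Z_{\boldsymbol{V}}^{-1}\boldsymbol{V}\|\le 2\gamma(\alpha+1)$), $(I-P_{\boldsymbol{W}'})\langle F(\boldsymbol{V}^{\top}\boldsymbol{W})-F(\boldsymbol{V}'^{\top}\boldsymbol{W}'),Z_{\boldsymbol{V}}^{-1}\boldsymbol{V}\rangle$ (use the Lipschitz estimate and $2\gamma(\alpha+1)$), and $(I-P_{\boldsymbol{W}'})\langle F(\boldsymbol{V}'^{\top}\boldsymbol{W}'),Z_{\boldsymbol{V}}^{-1}\boldsymbol{V}-Z_{\boldsymbol{V}'}^{-1}\boldsymbol{V}'\rangle$ (use $M$ and $\|Z_{\boldsymbol{V}}^{-1}\boldsymbol{V}-Z_{\boldsymbol{V}'}^{-1}\boldsymbol{V}'\|\le\|Z_{\boldsymbol{V}}^{-1}-Z_{\boldsymbol{V}'}^{-1}\|_{2}(\alpha+1)+2\gamma\|\boldsymbol{V}-\boldsymbol{V}'\|$ with Lemma~\ref{lem:lip-ZU}). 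Summing gives $\|G(\boldsymbol{V},\boldsymbol{W})-G(\boldsymbol{V}',\boldsymbol{W}')\|_{\mathcal{X}}\le C_{\alpha,\beta}\|(\boldsymbol{V},\boldsymbol{W})-(\boldsymbol{V}',\boldsymbol{W}')\|_{\mathcal{X}}$ with $C_{\alpha,\beta}$ depending only on $\alpha$, $\beta=\sqrt{S}$ and $\gamma$; the bound $\|G(\hat{\boldsymbol{U}},\hat{\boldsymbol{Y}})\|_{\mathcal{X}}\le M\beta+2\gamma\alpha M=:C'_{\alpha,\beta}$ follows from the same Cauchy--Schwarz estimates with $\|F(v_{0})\|_{L^{2}(\Omega;\mathcal{H})}<C'_{q}$.

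The main obstacle is not any single estimate — each is elementary — but the bookkeeping: one must pick a single radius $r=r(\hat{\boldsymbol{U}},\hat{\boldsymbol{Y}})$ simultaneously compatible with Lemma~\ref{lem:lip-ZU}, Lemma~\ref{lem:lip-projY}, and the ball on which Assumption~\ref{assu:lip-F-H} applies to $F$ composed with the bilinear product, and then verify that the resulting Lipschitz constant $C_{\alpha,\beta}$ depends only on $\alpha$, $\beta$, $\gamma$ and not on $r$. This last point is exactly where the orthonormality of $\hat{\boldsymbol{Y}}$, giving $\sigma_{\hat{\boldsymbol{Y}}}=1$, is used, so that the constant $2(\kappa+\beta)/\sigma_{\hat{\boldsymbol{Y}}}$ of Lemma~\ref{lem:lip-projY} stays uniform when $\kappa\le 1$.
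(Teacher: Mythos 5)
Your proposal is correct and follows essentially the same approach as the paper: fix a radius $r$ small enough that Lemma~\ref{lem:lip-ZU}, Lemma~\ref{lem:lip-projY} (with $\sigma_{\hat{\boldsymbol{Y}}}=1$ from orthonormality), and Assumption~\ref{assu:lip-F-H} all apply, then telescope $G_1$ and $G_2$ using Cauchy--Schwarz. The paper's own proof is much more terse---it simply states the two component estimates and cites Assumption~\ref{assu:lip-F-H} and Lemmata~\ref{lem:lip-projY}--\ref{lem:lip-ZU} with the choice $r<\min\{R(\hat{\boldsymbol{U}}),\overline{\kappa}(1,\beta)\}$---whereas you have supplied the telescoping and bookkeeping details that the paper omits; the substance is the same.
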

\begin{proof}
	Let {$\alpha\geq\|\hat{\boldsymbol{U}}\|_{[\mathcal{H}]^{S}}$
		and $\beta\geq\|\hat{\boldsymbol{Y}}\|_{[L^{2}(\Omega)]^{S}}=\sqrt{S}$}
	be given. First, from Assumption~\ref{assu:lip-F-H} we have $\|[G_{1}(\hat{\boldsymbol{Y}})](\hat{\boldsymbol{U}})\|_{[\mathcal{H}]^{S}}\leq{C_{\alpha,\beta}}$,
	and further, together with $\|Z_{\hat{\boldsymbol{U}}}^{-1}\|_{{2}}\leq\gamma$
	we have $\|[G_{2}(\hat{\boldsymbol{U}})](\hat{\boldsymbol{Y}})\|_{[\mathcal{H}]^{S}}\leq{C_{\alpha,\beta,\gamma}}$.
	It now suffices to show
	\begin{align*}
	\|\left[G_{1}(\boldsymbol{W})\right]&(\boldsymbol{V})-\left[G_{1}\left(\boldsymbol{W}'\right)\right](\boldsymbol{V}')\|_{[\mathcal{H}]^{S}}\\
	&
	\leq C_{\alpha,\beta}\Big(\|\boldsymbol{V}-\boldsymbol{\boldsymbol{V}}'\|_{[\mathcal{H}]^{S}}^{2}+\|\boldsymbol{W}-\boldsymbol{W}'\|_{[L^{2}(\Omega)]^{S}}^{2}\Big)^{1/2},
	\end{align*}
	and
	\begin{align*}
	\|\left[G_{2}(\boldsymbol{V})\right]&(\boldsymbol{W})-\left[G_{2}(\boldsymbol{V}')\right](\boldsymbol{W}')\|_{[L^{2}(\Omega)]^{S}}\\
	&\leq C'_{\alpha,\beta,\gamma}\Big(\|\boldsymbol{V}-\boldsymbol{V}'\|_{[\mathcal{H}]^{S}}^{2}+\|\boldsymbol{W}-\boldsymbol{W}'\|_{[L^{2}(\Omega)]^{S}}^{2}\Big)^{1/2}
	\end{align*}
	in closed balls centred at $\hat{\boldsymbol{U}}$, and $\hat{\boldsymbol{Y}}$,
	respectively, with a radius ${r=r(\hat{\boldsymbol{U}},\hat{\boldsymbol{Y}})}$.
	The first inequality can be checked from Assumption~\ref{assu:lip-F-H}.
	The second inequality follows from Lemmata~\ref{lem:lip-projY} and
	\ref{lem:lip-ZU} by letting $r<\min\{R(\hat{\boldsymbol{U}}),\overline{\kappa}(1,\beta)\}$,
	where $\overline{\kappa}(1,\beta)$ is as in Lemma~\ref{lem:Wedin-less-1}.
	\end{proof}
\section{Existence and regularity\label{sec:Existence-and-Regularity}}
Using the Lipschitz continuity established in the previous section,
we will now show the existence of the Dual DO solution on the
maximal interval. We start with local existence of the mild solution
$(\boldsymbol{U},\boldsymbol{Y})$ of the problem~\eqref{eq:Cauchy}.
Further, we will see that under suitable conditions, such solution is indeed the Dual DO solution in the strong, and furthermore in the classical sense {(Definitions~\ref{def:Dual-DO-strong} and \ref{def:Dual-DO-classical})}.
Hence, from the equivalence established in Section~\ref{subsec:equiv-DLR-DO},
we will be able to conclude that $\boldsymbol{U}^{\top}\boldsymbol{Y}$
is the solution of the original equation~\eqref{eq:original-eq-with-proj}.
\begin{proposition}[mild, local]\label{prop:mild-local} Let Assumptions \ref{assu:Lam}
	and \ref{assu:lip-F-H} hold. Suppose that the initial condition $\boldsymbol{U}_{0}\in[\mathcal{H}]^{S}$
	is linearly independent in $\mathcal{H}$, and $\boldsymbol{Y}_{0}\in[L^{2}(\Omega)]^{S}$
	is orthonormal in $L^{2}(\Omega)$. Then, there exists $t^{*}{=t^{*}(\boldsymbol{U}_{0},\boldsymbol{Y}_{0})}>0$
	such that the mild solution of the abstract Cauchy problem \eqref{eq:Cauchy}
	uniquely exists on $[0,t^{*}]$.
\end{proposition}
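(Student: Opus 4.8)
The plan is to assemble the ingredients already in place. First I would record the structure of the initial datum: since $\boldsymbol{U}_{0}$ is linearly independent in $\mathcal{H}$, its Gram matrix $Z_{\boldsymbol{U}_{0}}=\langle\boldsymbol{U}_{0},\boldsymbol{U}_{0}^{\top}\rangle$ is symmetric positive definite, hence invertible, so there is $\gamma=\gamma(\boldsymbol{U}_{0})>0$ with $\|Z_{\boldsymbol{U}_{0}}^{-1}\|_{2}\le\gamma$; and since $\boldsymbol{Y}_{0}$ is orthonormal we have $\mathbb{E}[\boldsymbol{Y}_{0}\boldsymbol{Y}_{0}^{\top}]=I$, in particular its smallest eigenvalue equals $1>0$. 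Thus the hypotheses needed to apply Proposition~\ref{prop:loc-Lip-G} at $(\hat{\boldsymbol{U}},\hat{\boldsymbol{Y}})=(\boldsymbol{U}_{0},\boldsymbol{Y}_{0})$ are met.

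Next I would recall that, under Assumption~\ref{assu:Lam}, the preceding proposition gives that $A\colon D(A)\subset\mathcal{X}\to\mathcal{X}$ generates a $C_{0}$ semigroup with $\|\mathrm{e}^{tA}\|_{\mathcal{X}\to\mathcal{X}}\le K_{\Lambda}$ for all $t\ge0$, so $\sup_{s\in[0,1]}\|\mathrm{e}^{sA}\|_{\mathcal{X}\to\mathcal{X}}\le K_{\Lambda}<\infty$. Then, by Proposition~\ref{prop:loc-Lip-G} (whose proof, via Lemmata~\ref{lem:lip-projY} and~\ref{lem:lip-ZU}, fixes a radius $r=r(\boldsymbol{U}_{0},\boldsymbol{Y}_{0})>0$ small enough that $Z_{\boldsymbol{V}}^{-1}$ exists and stays bounded, say by $2\gamma$, for every $\boldsymbol{V}$ within $r$ of $\boldsymbol{U}_{0}$, hence $G$ is well defined on the closed ball of radius $r$ about $(\boldsymbol{U}_{0},\boldsymbol{Y}_{0})$), the nonlinearity $G$ from \eqref{eq:def-G} satisfies the local Lipschitz bound and the bound $\|G(\boldsymbol{U}_{0},\boldsymbol{Y}_{0})\|_{\mathcal{X}}\le C'$ required in Proposition~\ref{prop:gen-mild-local}, with constants depending on $\|\boldsymbol{U}_{0}\|_{[\mathcal{H}]^{S}}$, $\|\boldsymbol{Y}_{0}\|_{[L^{2}(\Omega)]^{S}}=\sqrt{S}$ and $\gamma$. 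Applying Proposition~\ref{prop:gen-mild-local} with $t_{0}=0$ and initial condition $(\boldsymbol{U}_{0},\boldsymbol{Y}_{0})$ then yields a unique mild solution of \eqref{eq:Cauchy} on an interval $[0,\delta]$ with $\delta\in(0,1]$ depending only on $\|\boldsymbol{U}_{0}\|_{[\mathcal{H}]^{S}}$, $\sqrt{S}$, $K_{\Lambda}$ and $r(\boldsymbol{U}_{0},\boldsymbol{Y}_{0})$; setting $t^{*}:=\delta=t^{*}(\boldsymbol{U}_{0},\boldsymbol{Y}_{0})$ completes the argument.

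The only delicate point I expect is the domain issue: $G$ is defined only on $D(G)=\{(\hat{\boldsymbol{U}},\hat{\boldsymbol{Y}})\in\mathcal{X}\mid Z_{\hat{\boldsymbol{U}}}^{-1}\text{ exists}\}$, not on all of $\mathcal{X}$, so one must make sure the Picard iterates produced inside the proof of Proposition~\ref{prop:gen-mild-local} never leave the closed ball of radius $r$ about $(\boldsymbol{U}_{0},\boldsymbol{Y}_{0})$, which lies in $D(G)$. This is exactly why $\delta$ is taken small and why $r$ was chosen as in the proof of Proposition~\ref{prop:loc-Lip-G}; all the quantitative estimates needed (the semigroup bound $K_{\Lambda}$, the Lipschitz constant, and the bound on $\|G(\boldsymbol{U}_{0},\boldsymbol{Y}_{0})\|_{\mathcal{X}}$) are already available, so no further computation is required.
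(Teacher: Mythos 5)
Your proof is correct and follows exactly the route the paper takes: the paper's own proof is the one-liner ``In view of Proposition~\ref{prop:loc-Lip-G}, the statement follows from Proposition~\ref{prop:gen-mild-local}.'' You have simply unpacked what that entails — verifying that linear independence of $\boldsymbol{U}_{0}$ gives a bound $\|Z_{\boldsymbol{U}_{0}}^{-1}\|_{2}\le\gamma$, that orthonormality of $\boldsymbol{Y}_{0}$ gives the needed spectral lower bound, and then chaining Proposition~\ref{prop:loc-Lip-G} into Proposition~\ref{prop:gen-mild-local} — together with a sensible remark about keeping the Picard iterates inside the ball where $G$ is defined, a point the paper leaves implicit.
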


\begin{proof}
	In view of Proposition \ref{prop:loc-Lip-G}, the statement follows
	from Proposition\ \ref{prop:gen-mild-local}.
	\end{proof}
A regularity of the initial condition gives us the existence of the
strong solution.
\begin{proposition}[strong, local]
	\label{prop:Cauchy-stong-local}
	Let Assumptions \ref{assu:Lam}
	and \ref{assu:lip-F-H} hold. Suppose further that the initial condition
	$\boldsymbol{U}_{0}\in[\mathcal{H}]^{S}$ is linearly independent
	in $\mathcal{H}$, and $\boldsymbol{Y}_{0}\in[L^{2}(\Omega)]^{S}$
	is orthonormal in $L^{2}(\Omega)$. 
	Furthermore, suppose that $(\boldsymbol{U}_{0},\boldsymbol{Y}_{0})\in D(A)$.
	Then, the mild solution obtained in Proposition~\ref{prop:mild-local}
	is the strong solution of the abstract Cauchy problem \eqref{eq:Cauchy}.
\end{proposition}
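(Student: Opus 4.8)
The plan is to promote the mild solution from Proposition~\ref{prop:mild-local} to a strong one by first showing it is Lipschitz continuous in time and then invoking the standard regularity theory for the inhomogeneous abstract Cauchy problem $\dot w=Aw+g$ with $W^{1,1}$ forcing and initial datum in $D(A)$. Throughout write $w=(\boldsymbol{U},\boldsymbol{Y})$ and $w_0=(\boldsymbol{U}_0,\boldsymbol{Y}_0)$, so that the mild solution satisfies $w(t)=\mathrm e^{tA}w_0+\int_0^t\mathrm e^{(t-\sigma)A}G(w(\sigma))\,\mathrm d\sigma$ on $[0,t^*]$, with $\|\mathrm e^{tA}\|_{\mathcal X\to\mathcal X}\le K_\Lambda$.

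\emph{Step 1 (the solution stays in a good region).} By the construction underlying Propositions~\ref{prop:gen-mild-local}--\ref{prop:mild-local}, on $[0,t^*]$ the curve $w$ remains inside the closed balls around $w_0$ used in Proposition~\ref{prop:loc-Lip-G}; since $\boldsymbol{U}_0$ is linearly independent and $\boldsymbol{Y}_0$ orthonormal, on these balls Lemma~\ref{lem:lip-ZU} gives a uniform bound $\|Z_{\boldsymbol{U}(t)}^{-1}\|\le 2\gamma$, and Proposition~\ref{prop:loc-Lip-G} gives constants $L,M_G>0$ with $\|G(v)-G(v')\|_{\mathcal X}\le L\|v-v'\|_{\mathcal X}$ and $\|G(v)\|_{\mathcal X}\le M_G$ for all $v,v'$ on this region.

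\emph{Step 2 (Lipschitz regularity in time).} This is where $(\boldsymbol{U}_0,\boldsymbol{Y}_0)\in D(A)$ enters: for $v\in D(A)$ one has $\|(\mathrm e^{hA}-I)v\|_{\mathcal X}=\bigl\|\int_0^h\mathrm e^{sA}Av\,\mathrm ds\bigr\|_{\mathcal X}\le K_\Lambda h\|Av\|_{\mathcal X}$, so $t\mapsto\mathrm e^{tA}w_0$ is Lipschitz. Splitting $w(t+h)-w(t)$ in the usual way from the variation-of-constants formula and using Step~1, one obtains, for $0\le t\le t+h\le t^*$,
\[
\|w(t+h)-w(t)\|_{\mathcal X}\le K_\Lambda^2 h\|Aw_0\|_{\mathcal X}+K_\Lambda M_G\,h+K_\Lambda L\int_0^t\|w(\sigma+h)-w(\sigma)\|_{\mathcal X}\,\mathrm d\sigma ,
\]
and Grönwall's inequality yields $\|w(t+h)-w(t)\|_{\mathcal X}\le C\,h$ on $[0,t^*]$; note $\|Aw_0\|_{\mathcal X}=\|\Lambda\boldsymbol{U}_0\|_{[\mathcal H]^S}$, which is finite precisely by the $D(A)$ hypothesis. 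Hence $w$ is Lipschitz on $[0,t^*]$, and therefore so is $g\colon t\mapsto G(w(t))$ (composition of a Lipschitz curve with $G$, which is Lipschitz on the region of Step~1); since $\mathcal X$ is a Hilbert space, this gives $g\in W^{1,1}(0,t^*;\mathcal X)$.

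\emph{Step 3 (regularity theorem) and conclusion.} It now remains to invoke the standard regularity result for the linear inhomogeneous problem $\dot w=Aw+g$ with $w_0\in D(A)$ and $g\in W^{1,1}(0,t^*;\mathcal X)$ (see, e.g., \cite{Pazy.A_1983_book,Sell.G_You_2013_book}; the elementary time-regularity bookkeeping can be done with \cite{Miyadera.I_1992_book}): concretely, differentiating $w(t)=\mathrm e^{tA}w_0+\int_0^t\mathrm e^{(t-s)A}g(s)\,\mathrm ds$ gives $\dot w(t)=\mathrm e^{tA}Aw_0+g(t)-\mathrm e^{tA}g(0)+\int_0^t\mathrm e^{(t-s)A}g'(s)\,\mathrm ds$ for a.e.\ $t$, and one checks $w(t)\in D(A)$ with $Aw(t)=\dot w(t)-g(t)$. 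Thus $w$ is absolutely continuous on $[0,t^*]$, $w(t)\in D(A)=D_{\mathcal H}(\Lambda)\oplus[L^2(\Omega)]^S$ for a.e.\ $t$, and $\dot w(t)=Aw(t)+G(w(t))$ a.e., i.e.\ $(\boldsymbol{U},\boldsymbol{Y})$ is a strong solution of \eqref{eq:Cauchy}. The one point requiring care is Step~2: keeping the solution within the region where the Lipschitz constant of $G$ (Proposition~\ref{prop:loc-Lip-G}) and the bound $\|Z_{\boldsymbol{U}}^{-1}\|\le 2\gamma$ (Lemma~\ref{lem:lip-ZU}) are valid, and crucially exploiting $\boldsymbol{U}_0\in D_{\mathcal H}(\Lambda)$ to make the semigroup term Lipschitz — without this extra assumption one only has continuity of $w$, and the argument breaks down. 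Everything else is routine semigroup theory.
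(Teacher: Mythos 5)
Your proposal is correct and is, in substance, a step-by-step unpacking of \cite[Theorem 6.1.6]{Pazy.A_1983_book}, which is the only ingredient the paper's own (one-line) proof invokes: the paper simply says that, in view of that theorem, the statement follows from Proposition~\ref{prop:mild-local}. Your Steps~1--3 reproduce exactly the mechanism of that theorem — Lipschitz-in-time regularity from $w_0\in D(A)$ plus local Lipschitz continuity of $G$, then reflexivity of $\mathcal{X}$ to get $g\in W^{1,1}$, then the linear inhomogeneous regularity result. The one slip is in the explicit formula in Step~3: differentiating $w(t)=\mathrm e^{tA}w_0+\int_0^t\mathrm e^{(t-s)A}g(s)\,\mathrm ds$ with $w_0\in D(A)$ and $g\in W^{1,1}$ yields
\[
\dot w(t)=\mathrm e^{tA}Aw_0+\mathrm e^{tA}g(0)+\int_0^t\mathrm e^{(t-s)A}g'(s)\,\mathrm ds ,
\]
whereas you wrote $g(t)-\mathrm e^{tA}g(0)$ in place of $+\mathrm e^{tA}g(0)$ (a check at $A=0$ shows the discrepancy). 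This is inessential — what you actually use is only that $\dot w$ exists a.e., $w(t)\in D(A)$ a.e., and $\dot w=Aw+g$ a.e. — so the conclusion stands.
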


\begin{proof}
	In view of \cite[Theorem 6.1.6]{Pazy.A_1983_book}, the statement
	follows from Proposition~\ref{prop:mild-local}.
	\end{proof}
The above strong solution is actually the Dual DO approximation in
the strong sense.
\begin{corollary}[Dual DO-strong, local]
	\label{cor:DO-stong-local} Let the assumptions of
	Proposition\ \ref{prop:Cauchy-stong-local} hold. Then, the strong
	solution $(\boldsymbol{U}(t),\boldsymbol{Y}(t))$ of the abstract
	Cauchy problem \eqref{eq:Cauchy} uniquely exists on a non-empty interval
	$[0,t^{*}]$. The solution $\boldsymbol{U}(t)$ stays linearly independent
	on $[0,t^{*}]$ and the solution $\boldsymbol{Y}(t)$ is orthonormal
	in $L^{2}(\Omega)$ for $t\in[0,t^{*}]$ {and satisfies the gauge condition $\mathbb{E}[\dot{\boldsymbol{Y}}(t) \boldsymbol{Y}(t)^\top ]=0$ for almost every $t\in[0,t^*]$}. Hence, the Dual DO approximation
	uniquely exists in the strong sense on $[0,t^{*}]$.
\end{corollary}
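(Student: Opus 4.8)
The plan is to read off the asserted properties from the local well-posedness already established, and then to match them term by term against Definition~\ref{def:Dual-DO-strong}. By Proposition~\ref{prop:Cauchy-stong-local} the mild solution of \eqref{eq:Cauchy} furnished by Proposition~\ref{prop:mild-local} is a strong solution $(\boldsymbol{U},\boldsymbol{Y})$ on some non-empty interval $[0,t^{*}]$, unique among mild (hence in particular among strong) solutions. Since $\mathcal{X}$ is a Hilbert space, this strong solution is locally Lipschitz, hence absolutely continuous, on $[0,t^{*}]$; moreover the second component obeys $\boldsymbol{Y}(t)=\boldsymbol{Y}_{0}+\int_{0}^{t}[G_{2}(\boldsymbol{U}(s))](\boldsymbol{Y}(s))\,\mathrm{d}s$ with continuous integrand, so $t\mapsto\boldsymbol{Y}(t)$ is in fact continuously differentiable. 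Together with the observation that $\boldsymbol{U}_{0}$ linearly independent and $\boldsymbol{Y}_{0}$ orthonormal force $u_{0S}=\boldsymbol{U}_{0}^{\top}\boldsymbol{Y}_{0}\in M_{S}$, this already secures conditions (1)--(4) of Definition~\ref{def:Dual-DO-strong}.

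Next I would handle the linear independence of $\boldsymbol{U}(t)$. Since $\det Z_{\boldsymbol{U}_{0}}\neq0$ and $t\mapsto Z_{\boldsymbol{U}(t)}=\langle\boldsymbol{U}(t),\boldsymbol{U}(t)^{\top}\rangle$ is continuous, $\det Z_{\boldsymbol{U}(t)}\neq0$ on a neighbourhood of $0$; shrinking $t^{*}$ if necessary, $\boldsymbol{U}(t)$ is linearly independent for every $t\in[0,t^{*}]$. (This is in any case implicit in the construction, since the Lipschitz estimate of Proposition~\ref{prop:loc-Lip-G} is valid only where $Z_{\hat{\boldsymbol{U}}}^{-1}$ exists and the solution stays in that region.) In particular $Z_{\boldsymbol{U}(t)}^{-1}$ exists throughout $[0,t^{*}]$, so the right-hand side of \eqref{eq:working-eq} is well defined there.

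The heart of the matter is that $\boldsymbol{Y}(t)$ stays orthonormal and satisfies the gauge condition. From the second line of \eqref{eq:working-eq} one has $\dot{\boldsymbol{Y}}(t)=(I-P_{\boldsymbol{Y}(t)})\big(\langle F(\boldsymbol{U}^{\top}\boldsymbol{Y}),Z_{\boldsymbol{U}}^{-1}\boldsymbol{U}\rangle\big)$, and since each $Y_{k}(t)$ lies in $\mathrm{Im}(P_{\boldsymbol{Y}(t)})=\mathrm{span}_{\mathbb{R}}\{Y_{j}(t)\}_{j=1}^{S}$ while $I-P_{\boldsymbol{Y}(t)}$ is the orthogonal projection onto the complement, every component of $\dot{\boldsymbol{Y}}(t)$ is $L^{2}(\Omega)$-orthogonal to every $Y_{k}(t)$; that is, $\mathbb{E}[\dot{\boldsymbol{Y}}(t)\boldsymbol{Y}(t)^{\top}]=0$, which is the gauge condition, holding a.e.\ on $[0,t^{*}]$. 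Consequently the $\mathbb{R}^{S\times S}$-valued curve $M(t):=\mathbb{E}[\boldsymbol{Y}(t)\boldsymbol{Y}(t)^{\top}]$ is absolutely continuous with $\dot{M}(t)=\mathbb{E}[\dot{\boldsymbol{Y}}\boldsymbol{Y}^{\top}]+\mathbb{E}[\boldsymbol{Y}\dot{\boldsymbol{Y}}^{\top}]=0$ a.e., and $M(0)=I$ by orthonormality of $\boldsymbol{Y}_{0}$; hence $M(t)\equiv I$ and $\{Y_{j}(t)\}_{j=1}^{S}$ is orthonormal for all $t\in[0,t^{*}]$. This establishes conditions (5)--(6), so $(\boldsymbol{U},\boldsymbol{Y})$ is a Dual DO solution in the strong sense.

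Finally, for uniqueness: any Dual DO strong solution with initial datum $(\boldsymbol{U}_{0},\boldsymbol{Y}_{0})$ satisfies \eqref{eq:working-eq}, hence the abstract equation $\tfrac{\mathrm{d}}{\mathrm{d}t}(\boldsymbol{U},\boldsymbol{Y})=A(\boldsymbol{U},\boldsymbol{Y})+G(\boldsymbol{U},\boldsymbol{Y})$, a.e.\ with $(\boldsymbol{U},\boldsymbol{Y})$ absolutely continuous and $\boldsymbol{U}(t)\in D_{\mathcal{H}}(\Lambda)$ a.e.; differentiating $s\mapsto\mathrm{e}^{(t-s)A}(\boldsymbol{U}(s),\boldsymbol{Y}(s))$ and integrating shows it is a mild solution of \eqref{eq:Cauchy}, which by Proposition~\ref{prop:mild-local} is unique on $[0,t^{*}]$. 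I expect the orthonormality step to be the main obstacle: the right observation is that the gauge condition comes for free from the $(I-P_{\boldsymbol{Y}})$ structure of $G_{2}$, after which the Gram-matrix ODE closes; the remaining points are continuity arguments and routine verification against the definitions.
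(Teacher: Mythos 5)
Your proof is correct and follows essentially the paper's approach: linear independence of $\boldsymbol{U}(t)$ is built into the construction (the solution stays where $Z_{\boldsymbol{U}}^{-1}$ exists), and orthonormality of $\boldsymbol{Y}(t)$ is obtained exactly as in the paper by observing that the $(I-P_{\boldsymbol{Y}})$ structure of $G_2$ forces the gauge condition $\mathbb{E}[\dot{\boldsymbol{Y}}\boldsymbol{Y}^{\top}]=0$ a.e., whence the Gram matrix $\mathbb{E}[\boldsymbol{Y}\boldsymbol{Y}^{\top}]$ is absolutely continuous with vanishing derivative and thus stays equal to $I$. The extras you add --- the continuity-of-$\det Z_{\boldsymbol{U}}$ argument, the $C^1$-ness of $\boldsymbol{Y}$, and the explicit reduction of a Dual DO strong solution to a mild solution for uniqueness --- are fine but go beyond what is needed, since the paper treats these points as already supplied by Propositions~\ref{prop:mild-local}--\ref{prop:Cauchy-stong-local}.
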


\begin{proof}
	It suffices to show the linear independence of $\boldsymbol{U}(t)$
	and the orthonormality of $\boldsymbol{Y}(t)$. 
	But the solution of
	the abstract Cauchy problem \eqref{eq:Cauchy} established in Proposition~\ref{prop:Cauchy-stong-local}
	exists only on an interval $[0,t^{*}]$ on which the inverse Gram
	matrix $Z_{\boldsymbol{U}}^{-1}$ is well defined. Hence, on this
	interval, $\boldsymbol{U}(t)$ is linear independent.
	
	To see the orthonormality, first note that, from the absolute continuity of $\boldsymbol{Y}(t)$, the function $\mathbb{E}[Y_{j}Y_{k}]$ is absolutely continuous on $[0,T]$. 
	But following the same argument as \eqref{eq:implied-gauge}, we have $\frac{\mathrm{d}}{\mathrm{d}t}\mathbb{E}[Y_{j}Y_{k}]=
	\mathbb{E}[\dot{Y}_{j}Y_{k}]+\mathbb{E}[Y_{j}\dot{Y}_{k}]=0$ a.e.\ on $[0,T]$. 
	{Therefore, from the orthonormality of the initial condition, for every $t\in[0,t^{*}]$ we 
		have $\mathbb{E}[Y_{j}(t)Y_{k}(t)]-\delta_{jk}=\int _0^t0\mathrm{d}t=0$, where $\delta_{jk}=1$ only if $j=k$, and $0$ otherwise. Hence, $\boldsymbol{Y}(t)$ is orthonormal for all $t\in[0,t^*]$.}
	\end{proof}
With a further regularity of $F$, we obtain the Dual DO approximation
in the classical sense.
\begin{corollary}[Dual DO-classical, local]\label{cor:classical-local} Suppose that Assumptions
	\ref{assu:Lam}, \ref{assu:lip-F-H} and \ref{assu:lip-F-D(Lam)}
	are satisfied. Suppose further that the initial condition $\boldsymbol{U}_{0}\in D(\Lambda)$
	is linearly independent in $\mathcal{H}$, and $\boldsymbol{Y}_{0}\in[L^{2}(\Omega)]^{S}$
	is orthonormal in $L^{2}(\Omega)$. 
	Then, there exists $t^{*}>0$ such that the Dual DO approximation
	uniquely exists in the classical sense on $[0,t^{*}]$.
\end{corollary}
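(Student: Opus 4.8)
The plan is to re-run the local fixed-point argument of Section~\ref{sec:param-eq}, but in the smaller Hilbert space $\tilde{\mathcal X}:=[\mathcal V]^{S}\oplus[L^{2}(\Omega)]^{S}$, where $\mathcal V$ is the graph space $D_{\mathcal H}(\Lambda)$ equipped with $\langle\cdot,\cdot\rangle+\langle\Lambda\cdot,\Lambda\cdot\rangle$ as above, and then to transport the additional regularity back to the solution of Proposition~\ref{prop:mild-local}. Since $\Lambda$ is only a $C_{0}$-generator, no parabolic smoothing is available, so the classical regularity has to be produced out of the hypothesis $\boldsymbol U_{0}\in D_{\mathcal H}(\Lambda)$ together with the graph-norm Lipschitz estimate for $F$ recorded just after Assumption~\ref{assu:lip-F-D(Lam)}.

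First I would check that the operator $\tilde A(\hat{\boldsymbol U},\hat{\boldsymbol Y}):=(\Lambda\hat{\boldsymbol U},0)$, with $D(\tilde A):=\{\hat{\boldsymbol U}\in[\mathcal V]^{S}:\Lambda\hat{\boldsymbol U}\in[\mathcal V]^{S}\}\oplus[L^{2}(\Omega)]^{S}$, generates a $C_{0}$ semigroup on $\tilde{\mathcal X}$ with $\|\mathrm e^{t\tilde A}\|_{\tilde{\mathcal X}\to\tilde{\mathcal X}}\le K_{\Lambda}$: indeed $\mathrm e^{t\Lambda}$ commutes with $\Lambda$ on $D_{\mathcal H}(\Lambda)$, hence leaves $\mathcal V$ invariant and restricts to a $C_{0}$ semigroup on $\mathcal V$ with the same exponential bound since $\|\mathrm e^{t\Lambda}x\|^{2}+\|\Lambda\mathrm e^{t\Lambda}x\|^{2}=\|\mathrm e^{t\Lambda}x\|^{2}+\|\mathrm e^{t\Lambda}\Lambda x\|^{2}\le K_{\Lambda}^{2}\mathrm e^{-2\lambda t}(\|x\|^{2}+\|\Lambda x\|^{2})$, and the resolvent of this restriction coincides with $(\mu I-\Lambda)^{-1}$, so the Hille--Yosida computation already done for $A$ on $\mathcal X$ applies verbatim. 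Then I would verify that $G$ from \eqref{eq:def-G} maps $\tilde{\mathcal X}$ into $\tilde{\mathcal X}$ and satisfies the hypothesis of Proposition~\ref{prop:gen-mild-local} relative to $\tilde{\mathcal X}$. For $\hat{\boldsymbol U}\in[\mathcal V]^{S}$ and $\hat{\boldsymbol Y}\in[L^{2}(\Omega)]^{S}$ one has $\hat{\boldsymbol U}^{\top}\hat{\boldsymbol Y}\in L^{2}(\Omega;\mathcal V)$, so by the graph-norm Lipschitz estimate $F(\hat{\boldsymbol U}^{\top}\hat{\boldsymbol Y})\in L^{2}(\Omega;\mathcal V)$; since $\Lambda$ is closed it passes through the Bochner integral, so $[G_{1}(\hat{\boldsymbol Y})](\hat{\boldsymbol U})=\mathbb E[F(\hat{\boldsymbol U}^{\top}\hat{\boldsymbol Y})\hat{\boldsymbol Y}]\in[\mathcal V]^{S}$, with the corresponding local Lipschitz bound in the $[\mathcal V]^{S}$-norm following the same way. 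The second component $[G_{2}(\hat{\boldsymbol U})](\hat{\boldsymbol Y})$ involves $\hat{\boldsymbol U}$ only through the $\mathcal H$-level quantities $\hat{\boldsymbol U}^{\top}\hat{\boldsymbol Y}$, $Z_{\hat{\boldsymbol U}}$ and $\langle\,\cdot\,,\hat{\boldsymbol U}\rangle$, so the estimates of Lemmata~\ref{lem:lip-projY}, \ref{lem:lip-ZU} and Proposition~\ref{prop:loc-Lip-G} carry over because $\|\cdot\|_{[\mathcal H]^{S}}\le\|\cdot\|_{[\mathcal V]^{S}}$.

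Granting this, Proposition~\ref{prop:gen-mild-local} applied in $\tilde{\mathcal X}$ gives --- since $(\boldsymbol U_{0},\boldsymbol Y_{0})\in\tilde{\mathcal X}$ with $\|Z_{\boldsymbol U_{0}}^{-1}\|$ finite by Lemma~\ref{lem:lip-ZU} and $\boldsymbol Y_{0}$ orthonormal --- a mild solution of \eqref{eq:Cauchy} in $\tilde{\mathcal X}$ on some $[0,t^{*}]$. This curve is in particular an $\mathcal X$-mild solution, so by the uniqueness part of Proposition~\ref{prop:mild-local} it coincides on $[0,t^{*}]$ with the solution $(\boldsymbol U,\boldsymbol Y)$ of Proposition~\ref{prop:mild-local}; hence $\boldsymbol U(t)\in[D_{\mathcal H}(\Lambda)]^{S}$ for every $t\in[0,t^{*}]$ and $t\mapsto(\boldsymbol U(t),\Lambda\boldsymbol U(t))$ is continuous, i.e.\ $h:=G_{1}(\boldsymbol Y)(\boldsymbol U)$ lies in $C([0,t^{*}];[\mathcal V]^{S})$. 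Writing the $\boldsymbol U$-equation in mild form $\boldsymbol U(t)=\mathrm e^{t\Lambda}\boldsymbol U_{0}+\int_{0}^{t}\mathrm e^{(t-\tau)\Lambda}h(\tau)\,\mathrm d\tau$ with $\boldsymbol U_{0}\in[D_{\mathcal H}(\Lambda)]^{S}$ and $h$ continuous into $[D_{\mathcal H}(\Lambda)]^{S}$, the standard theory of the inhomogeneous abstract Cauchy problem with $D(\Lambda)$-valued forcing (e.g.\ \cite[Section 4.2]{Pazy.A_1983_book}; one pulls $\Lambda$ through the convolution by closedness) gives $\boldsymbol U\in C^{1}([0,t^{*}];[\mathcal H]^{S})$ with $\boldsymbol U(t)\in[D_{\mathcal H}(\Lambda)]^{S}$ and $\dot{\boldsymbol U}=\Lambda\boldsymbol U+h$ on $[0,t^{*}]$; and since $\tau\mapsto[G_{2}(\boldsymbol U(\tau))](\boldsymbol Y(\tau))$ is continuous into $[L^{2}(\Omega)]^{S}$, the relation $\boldsymbol Y(t)=\boldsymbol Y_{0}+\int_{0}^{t}[G_{2}(\boldsymbol U(\tau))](\boldsymbol Y(\tau))\,\mathrm d\tau$ gives $\boldsymbol Y\in C^{1}([0,t^{*}];[L^{2}(\Omega)]^{S})$.

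Finally, the constraints are checked exactly as in Corollary~\ref{cor:DO-stong-local}: the inverse Gram matrix is defined throughout $[0,t^{*}]$ --- that is where the solution is constructed to live --- so $\boldsymbol U(t)$ is linearly independent for all $t\in[0,t^{*}]$; and, $\boldsymbol Y$ now being $C^{1}$, the identity $\tfrac{\mathrm d}{\mathrm dt}\mathbb E[Y_{j}Y_{k}]=\mathbb E[\dot Y_{j}Y_{k}]+\mathbb E[Y_{j}\dot Y_{k}]=0$ holds for every $t$, so integration from $0$ together with the orthonormality of $\boldsymbol Y_{0}$ gives $\mathbb E[Y_{j}(t)Y_{k}(t)]=\delta_{jk}$ for all $t\in[0,t^{*}]$. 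With $\boldsymbol U_{0}\in D_{\mathcal H}(\Lambda)$ this verifies every clause of Definition~\ref{def:Dual-DO-classical}, and uniqueness follows from that of the mild solution in Proposition~\ref{prop:mild-local}. I expect the main obstacle to be the second paragraph --- that $G$ maps $\tilde{\mathcal X}$ into $\tilde{\mathcal X}$ and is locally Lipschitz there --- which is exactly where Assumption~\ref{assu:lip-F-D(Lam)}, through the graph-norm bound on $F$, and the closedness of $\Lambda$ are essential.
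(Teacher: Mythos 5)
Your proof is correct and follows essentially the same route as the paper's: both work in the graph space $\tilde{\mathcal X}=[\mathcal V]^{S}\oplus[L^{2}(\Omega)]^{S}$, check that the restricted semigroup and the nonlinearity $G$ satisfy the hypotheses of (a version of) Proposition~\ref{prop:gen-mild-local} there, and then verify the linear-independence and orthonormality constraints as in Corollary~\ref{cor:DO-stong-local}. The only difference is cosmetic: where the paper invokes the discussion on pages 190--191 of Pazy for upgrading the mild solution to a classical one, you spell out the bootstrap explicitly via the $D(\Lambda)$-valued inhomogeneous Cauchy problem (Pazy, Section~4.2) and the observation that the $\tilde{\mathcal X}$-mild solution coincides with the $\mathcal X$-mild solution by uniqueness.
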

\begin{proof}
	We first observe that $G:[\mathcal{V}]^{S}\oplus[L^{2}(\Omega)]^{S}\rightarrow[\mathcal{V}]^{S}\oplus[L^{2}(\Omega)]^{S}$
	is locally Lipschitz, where $\mathcal{V}$ is the Hilbert space $D_{\mathcal{H}}(\Lambda)$
	equipped with the graph norm.
	Further, we note that $(\mathrm{e}^{tA})_{t\ge0}$ is a $C_{0}$ semigroup
	on $[\mathcal{V}]^{S}\oplus[L^{2}(\Omega)]^{S}$.
	With these in mind, we see that a result analogous to Proposition~\ref{prop:gen-mild-local}
	holds in $[\mathcal{V}]^{S}\oplus[L^{2}(\Omega)]^{S}$.
	Then, in view of the discussion in \cite[pages 190--191]{Pazy.A_1983_book},
	the statement follows from the similar argument as in the proof of
	Corollary\ \ref{cor:DO-stong-local}.
	\end{proof}
We now extend the solution to the maximal time interval in the strong,
and the classical sense. For both, we need bounds for the solution
in terms of a suitable (semi)norm. We start with the following bound.
\begin{lemma}
	\label{lem:UY-bdd-UH}
	\begin{sloppypar}
		Let Assumption~\ref{assu:stab} hold. Suppose
		that the strong solution $(\boldsymbol{U}(t),\boldsymbol{Y}(t))$
		of the abstract Cauchy problem~\eqref{eq:Cauchy} exists on $[0,t^{*}]$.
		Then, we have $\|\boldsymbol{Y}(t)\|_{[L^{2}(\Omega)]^{S}}=\|\boldsymbol{Y}_{0}\|_{[L^{2}(\Omega)]^{S}}$
		for all $t\in[0,t^{*}]$. Furthermore, we have
		\[
		\|\boldsymbol{U}(t)\|_{[\mathcal{H}]^{S}}\leq(\sqrt{2C_{\Lambda,F}}t^{1/2}+\|\boldsymbol{U}_{0}\|_{[\mathcal{H}]^{S}}\mathrm{e}^{C_{\Lambda,F}t})\quad\text{ for all }t\in[0,t^{*}].
		\]
	\end{sloppypar}
\end{lemma}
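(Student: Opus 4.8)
The plan is to prove the two assertions separately. Throughout I would use that, by Corollary~\ref{cor:DO-stong-local} together with the discussion in Section~\ref{sec:param-eq}, a strong solution $(\boldsymbol{U},\boldsymbol{Y})$ of \eqref{eq:Cauchy} is a Dual DO solution in the strong sense: both components are absolutely continuous on $[0,t^{*}]$, they satisfy \eqref{eq:working-eq} for a.e.\ $t$, $\boldsymbol{Y}(t)$ is $L^{2}(\Omega)$-orthonormal for every $t$, and --- since $(\boldsymbol{U},\boldsymbol{Y})$ is a \emph{strong} solution of the abstract Cauchy problem --- one has $(\boldsymbol{U}(t),\boldsymbol{Y}(t))\in D(A)$, hence $U_{j}(t)\in D_{\mathcal{H}}(\Lambda)$, for a.e.\ $t\in[0,t^{*}]$.

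For the $\boldsymbol{Y}$-statement, the second line of \eqref{eq:working-eq} gives $\dot{\boldsymbol{Y}}(t)=(I-P_{\boldsymbol{Y}(t)})\big(\langle F(\boldsymbol{U}^{\top}\boldsymbol{Y}),Z_{\boldsymbol{U}}^{-1}\boldsymbol{U}\rangle\big)$ for a.e.\ $t$. Since $P_{\boldsymbol{Y}(t)}$ is the $L^{2}(\Omega)$-orthogonal projection onto $\mathrm{span}\{Y_{j}(t)\}_{j=1}^{S}$, we have $(I-P_{\boldsymbol{Y}(t)})Y_{k}(t)=0$, so the manipulation already recorded in \eqref{eq:implied-gauge} yields $\mathbb{E}[\dot{\boldsymbol{Y}}(t)\boldsymbol{Y}(t)^{\top}]=0$ for a.e.\ $t$. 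As $t\mapsto\boldsymbol{Y}(t)$ is absolutely continuous into $[L^{2}(\Omega)]^{S}$, the map $t\mapsto\mathbb{E}[\boldsymbol{Y}(t)\boldsymbol{Y}(t)^{\top}]\in\mathbb{R}^{S\times S}$ is absolutely continuous with a.e.\ derivative $\mathbb{E}[\dot{\boldsymbol{Y}}\boldsymbol{Y}^{\top}]+\mathbb{E}[\boldsymbol{Y}\dot{\boldsymbol{Y}}^{\top}]=0$, hence constant and equal to $\mathbb{E}[\boldsymbol{Y}_{0}\boldsymbol{Y}_{0}^{\top}]$; taking the trace gives $\|\boldsymbol{Y}(t)\|_{[L^{2}(\Omega)]^{S}}^{2}=\|\boldsymbol{Y}_{0}\|_{[L^{2}(\Omega)]^{S}}^{2}$ for all $t\in[0,t^{*}]$.

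For the $\boldsymbol{U}$-statement I would run an energy estimate. The curve $t\mapsto\|\boldsymbol{U}(t)\|_{[\mathcal{H}]^{S}}^{2}$ is absolutely continuous (composition of $\|\cdot\|^{2}$ with an absolutely continuous curve) with a.e.\ derivative $2\langle\dot{\boldsymbol{U}}(t),\boldsymbol{U}(t)\rangle_{[\mathcal{H}]^{S}}$. Inserting $\dot{\boldsymbol{U}}=\Lambda\boldsymbol{U}+\mathbb{E}[F(u_{S})\boldsymbol{Y}]$ from \eqref{eq:working-eq} (with $u_{S}=\boldsymbol{U}^{\top}\boldsymbol{Y}$), using $\sum_{j}Y_{j}U_{j}=u_{S}$, the orthonormality $\mathbb{E}[\boldsymbol{Y}(t)\boldsymbol{Y}(t)^{\top}]=I$, and $\Lambda u_{S}(t)=(\Lambda\boldsymbol{U}(t))^{\top}\boldsymbol{Y}(t)\in L^{2}(\Omega;\mathcal{H})$ (valid a.e.\ by the first paragraph), one verifies
\[
\tfrac{1}{2}\tfrac{\mathrm{d}}{\mathrm{d}t}\|\boldsymbol{U}(t)\|_{[\mathcal{H}]^{S}}^{2}=\langle\dot{\boldsymbol{U}}(t),\boldsymbol{U}(t)\rangle_{[\mathcal{H}]^{S}}=\mathbb{E}\big[\langle\Lambda u_{S}(t)+F(u_{S}(t)),u_{S}(t)\rangle\big],
\]
together with $\|u_{S}(t)\|_{L^{2}(\Omega;\mathcal{H})}^{2}=\|\boldsymbol{U}(t)\|_{[\mathcal{H}]^{S}}^{2}$. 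Assumption~\ref{assu:stab} then gives the differential inequality $\tfrac{\mathrm{d}}{\mathrm{d}t}\|\boldsymbol{U}(t)\|_{[\mathcal{H}]^{S}}^{2}\le 2C_{\Lambda,F}\big(1+\|\boldsymbol{U}(t)\|_{[\mathcal{H}]^{S}}^{2}\big)$ a.e.\ on $[0,t^{*}]$. Integrating from $0$ and applying Grönwall's lemma to $\phi(t):=\|\boldsymbol{U}(t)\|_{[\mathcal{H}]^{S}}^{2}\le\|\boldsymbol{U}_{0}\|_{[\mathcal{H}]^{S}}^{2}+2C_{\Lambda,F}t+2C_{\Lambda,F}\int_{0}^{t}\phi(s)\,\mathrm{d}s$ (whose forcing term is non-decreasing) gives $\phi(t)\le(\|\boldsymbol{U}_{0}\|_{[\mathcal{H}]^{S}}^{2}+2C_{\Lambda,F}t)\,\mathrm{e}^{2C_{\Lambda,F}t}$; taking square roots and using $\sqrt{a+b}\le\sqrt{a}+\sqrt{b}$ yields the asserted exponential-in-time bound on $\|\boldsymbol{U}(t)\|_{[\mathcal{H}]^{S}}$.

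The step I expect to require the most care is the energy identity $\langle\dot{\boldsymbol{U}},\boldsymbol{U}\rangle_{[\mathcal{H}]^{S}}=\mathbb{E}[\langle\Lambda u_{S}+F(u_{S}),u_{S}\rangle]$: it is exactly what lets Assumption~\ref{assu:stab} (a statement about $v\in L^{2}(\Omega;\mathcal{H})$ with $\Lambda v\in L^{2}(\Omega;\mathcal{H})$) be invoked, and it hinges on $u_{S}(t)$ lying in the domain of the pointwise-extended operator $\Lambda$ on $L^{2}(\Omega;\mathcal{H})$ for a.e.\ $t$ --- precisely the extra regularity distinguishing a \emph{strong} solution of \eqref{eq:Cauchy} --- and on the orthonormality $\mathbb{E}[\boldsymbol{Y}(t)\boldsymbol{Y}(t)^{\top}]=I$ furnished by the first part, which is what makes the componentwise sums over $j$ collapse into $\mathbb{E}[\langle\,\cdot\,,u_{S}\rangle]$. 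The remaining ingredients --- the gauge identity $\mathbb{E}[\dot{\boldsymbol{Y}}\boldsymbol{Y}^{\top}]=0$, the absolute continuity of $t\mapsto\|\boldsymbol{U}(t)\|_{[\mathcal{H}]^{S}}^{2}$, and the concluding Grönwall estimate --- are routine.
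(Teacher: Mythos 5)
Your proof is correct and follows essentially the same route as the paper: constancy of $\|\boldsymbol{Y}(t)\|_{[L^2(\Omega)]^S}$ from the implied gauge condition \eqref{eq:implied-gauge}, then the energy identity $\tfrac12\tfrac{\mathrm{d}}{\mathrm{d}t}\|\boldsymbol{U}\|_{[\mathcal{H}]^S}^2=\mathbb{E}[\langle\Lambda u_S+F(u_S),u_S\rangle]$ obtained via the $\boldsymbol{Y}$-orthonormality, followed by Assumption~\ref{assu:stab} and Gr\"onwall; your explicit justification that the strong solution of \eqref{eq:Cauchy} has $u_S(t)\in D(\Lambda)$ a.e.\ (so that Assumption~\ref{assu:stab} is actually applicable) is a useful clarification of a step the paper leaves implicit. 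One small caveat you share with the paper's own proof: taking square roots in the Gr\"onwall bound $\|\boldsymbol{U}(t)\|^2\le(\|\boldsymbol{U}_0\|^2+2C_{\Lambda,F}t)\,\mathrm{e}^{2C_{\Lambda,F}t}$ and using $\sqrt{a+b}\le\sqrt a+\sqrt b$ gives $\|\boldsymbol{U}(t)\|\le\big(\sqrt{2C_{\Lambda,F}}\,t^{1/2}+\|\boldsymbol{U}_0\|\big)\mathrm{e}^{C_{\Lambda,F}t}$, i.e.\ the exponential multiplies \emph{both} terms, which is weaker than the inequality as printed in the lemma; this discrepancy is harmless for the later uses of the lemma (which only require finiteness of the bound on $[0,t_{\max})$), but your closing phrase ``yields the asserted bound'' should be softened to acknowledge it.
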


\begin{proof}
	Following the same argument as \eqref{eq:implied-gauge}, we
	have $\mathbb{E}\big[Y_{j}\frac{\partial}{\partial t}Y_{j}\big]=0$
	a.e.~$[0,t^{*}]$, $j=1,\dots,S$. Hence, $\|\boldsymbol{Y}(t)\|_{[L^{2}(\Omega)]^{S}}$
	is constant a.e.~$[0,t^{*}]$. Then, the continuity of $t\mapsto\|\boldsymbol{Y}(t)\|_{[L^{2}(\Omega)]^{S}}$
	implies the first statement. Next, almost everywhere in $[0,t^{*}]$
	we have
	\[
	\frac{\partial}{\partial t}\boldsymbol{U}^{\top}=\Lambda(\boldsymbol{U}^{\top})+\mathbb{E}\left[F(u_{S})\boldsymbol{Y}^{\top}\right]=\mathbb{E}\left[\Lambda(\boldsymbol{U}^{\top}\boldsymbol{Y})\boldsymbol{Y}^{\top}\right]+\mathbb{E}\left[F(u_{S})\boldsymbol{Y}^{\top}\right],
	\]
	where each component is in $\mathcal{H}$, and hence {for $j=1,\dots,S$, we have}
	\[
	\langle\frac{\partial}{\partial t}U_{j},U_{j}\rangle=\mathbb{E}[\langle\Lambda(u_{S})],U_{j}Y_{j}\rangle]+\mathbb{E}\left[\langle F(u_{S}),U_{j}Y_{j}\rangle\right].
	\]
	Hence, because of Assumption \ref{assu:stab} and the orthonormality
	of $\{Y_{j}\}$ we have
	\begin{align*}
	\frac{\mathrm{d}}{\mathrm{d}t}\sum_{j=1}^{S}\|U_{j}\|^{2} & =2\mathbb{E}[\langle\Lambda(u_{S})+F(u_{S}),u_{S}\rangle]\\
	& \leq2C_{\Lambda,F}(1+\|u_{S}\|_{L^{2}(\Omega;\mathcal{H})}^{2})\leq2C_{\Lambda,F}\Big(1+\sum_{j=1}^{S}\|U_{j}\|^{2}\Big),
	\end{align*}
	and thus \[\sum_{j=1}^{S}\|U_{j}(t)\|^{2}\leq(2C_{\Lambda,F}t+\sum_{j=1}^{S}\|U_{j}(0)\|^{2})+2C_{\Lambda,F}\int_{0}^{t}\sum_{j=1}^{S}\|U_{j}(s)\|^{2}\mathrm{d}s.\] Therefore, the Gronwall's inequality implies
	\[
	\sum_{j=1}^{S}\|U_{j}(t)\|^{2}\leq\Big(2C_{\Lambda,F}t+\sum_{j=1}^{S}\|U_{j}(0)\|^{2}\Big)\mathrm{e}^{2C_{\Lambda,F}t},
	\]
	and thus the second statement holds for almost every $t$. Noting
	that the mapping $t\mapsto[\|\boldsymbol{U}(t)\|_{[\mathcal{H}]^{S}}-\text{(\ensuremath{\sqrt{2C_{\Lambda,F}}t^{1/2}}+\ensuremath{\|\boldsymbol{U}}(0)\ensuremath{\|_{[\mathcal{H}]^{S}}})\ensuremath{\mathrm{e}^{C_{\Lambda,F}t}}}]$
	is continuous, this is true for every $t\in[0,t^{*}]$.
	\end{proof}
We are ready to establish the existence of Dual DO solution in
the strong sense until $\boldsymbol{U}$ becomes linearly dependent.
\begin{theorem}[Dual DO-strong, maximal]\label{thm:strong-maximal}Suppose Assumptions \ref{assu:Lam},
	\ref{assu:lip-F-H}, and \ref{assu:stab} are satisfied. Suppose further
	that the initial condition $\boldsymbol{U}_{0}\in[\mathcal{H}]^{S}$
	is linearly independent in $\mathcal{H}$, and $\boldsymbol{Y}_{0}\in[L^{2}(\Omega)]^{S}$
	is orthonormal in $L^{2}(\Omega)$. Further, suppose that $(\boldsymbol{U}_{0},\boldsymbol{Y}_{0})\in D(A)$.
	Then, there exists $t_{\max}>0$ such that the Dual DO approximation
	uniquely exists in the strong sense on $\left[0,t_{\max}\right)$.
	The approximation can be extended in time until the Gram matrix $Z_{\boldsymbol{U}}$
	of $\boldsymbol{U}$ becomes singular: we have either 
	\[
	t_{\max}=\infty,\quad\text{or}\quad\lim_{t\uparrow t_{\max}}\|Z_{\boldsymbol{U}(t)}^{-1}\|_{{2}}=\infty.
	\]
\end{theorem}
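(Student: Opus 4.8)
The plan is the standard maximal-existence / blow-up argument, with the quantity controlling the solution being the pair of a priori bounds from Lemma~\ref{lem:UY-bdd-UH} together with $\|Z_{\boldsymbol{U}(t)}^{-1}\|_2$.

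\emph{Maximal solution.} By Corollary~\ref{cor:DO-stong-local} there is $t^{*}>0$ and a unique Dual DO strong solution on $[0,t^{*}]$; equivalently, a unique strong solution of the abstract Cauchy problem \eqref{eq:Cauchy}, since on any interval of existence $\boldsymbol{U}(t)$ stays linearly independent (the mild solution is only defined where $Z_{\boldsymbol{U}}^{-1}$ exists) and $\boldsymbol{Y}(t)$ stays orthonormal. Set
\[
t_{\max}:=\sup\{T>0 : \text{a strong solution of \eqref{eq:Cauchy} exists on }[0,T]\}\in(0,\infty].
\]
Uniqueness of mild solutions (Proposition~\ref{prop:gen-mild-local}) forces any two such solutions to agree on their common subinterval, so there is a well-defined strong solution $(\boldsymbol{U},\boldsymbol{Y})$ on $[0,t_{\max})$, and by Corollary~\ref{cor:DO-stong-local} it is \emph{the} Dual DO solution in the strong sense there. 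If $t_{\max}=\infty$ we are done; so assume $t_{\max}<\infty$ and suppose, for contradiction, that $\lim_{t\uparrow t_{\max}}\|Z_{\boldsymbol{U}(t)}^{-1}\|_2\neq\infty$.

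\emph{A priori bounds and a good restart point.} On $[0,t_{\max})$, Lemma~\ref{lem:UY-bdd-UH} gives $\|\boldsymbol{Y}(t)\|_{[L^{2}(\Omega)]^{S}}=\|\boldsymbol{Y}_{0}\|_{[L^{2}(\Omega)]^{S}}=\sqrt{S}=:\beta$ and $\|\boldsymbol{U}(t)\|_{[\mathcal{H}]^{S}}\le\sqrt{2C_{\Lambda,F}}\,t_{\max}^{1/2}+\|\boldsymbol{U}_{0}\|_{[\mathcal{H}]^{S}}\mathrm{e}^{C_{\Lambda,F}t_{\max}}=:\alpha$. Since $\boldsymbol{U}(t)$ is linearly independent for every $t\in[0,t_{\max})$, the map $t\mapsto Z_{\boldsymbol{U}(t)}^{-1}$ is continuous on $[0,t_{\max})$; as $\liminf_{t\uparrow t_{\max}}\|Z_{\boldsymbol{U}(t)}^{-1}\|_2=:L<\infty$, for $\gamma:=L+1$ the set $O:=\{t\in[0,t_{\max}):\|Z_{\boldsymbol{U}(t)}^{-1}\|_2<\gamma\}$ is relatively open and has $t_{\max}$ as a limit point. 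The crucial observation is that the local existence time furnished by Proposition~\ref{prop:gen-mild-local} (via Proposition~\ref{prop:loc-Lip-G}) for \eqref{eq:Cauchy} started at \emph{any} point $(\boldsymbol{V},\boldsymbol{W})$ with $\|\boldsymbol{V}\|_{[\mathcal{H}]^{S}}\le\alpha$, $\|\boldsymbol{W}\|_{[L^{2}(\Omega)]^{S}}\le\beta$ and $\|Z_{\boldsymbol{V}}^{-1}\|_2\le\gamma$ may be taken to be a single $\delta=\delta(\alpha,\beta,\gamma,K_{\Lambda},S)>0$ independent of the point: indeed $\sup_{s\ge0}\|\mathrm{e}^{sA}\|\le K_{\Lambda}$ globally, and the radius $r$ in Proposition~\ref{prop:loc-Lip-G} can be chosen as $r_{0}:=\min\{R_{0},\overline{\kappa}(1,\beta)\}$ where, using the Lipschitz bound $\|Z_{\boldsymbol{V}'}-Z_{\boldsymbol{V}}\|_2\le C'_{\alpha+1,S}\|\boldsymbol{V}'-\boldsymbol{V}\|_{[\mathcal{H}]^{S}}$ from Lemma~\ref{lem:pre-lip-ZU} and a Neumann-series estimate, $R_{0}:=\min\{1,(2\gamma C'_{\alpha+1,S})^{-1}\}$ already forces $\|Z_{\boldsymbol{V}'}^{-1}\|_2\le2\gamma$ on the ball of radius $R_{0}$; hence the constants in Lemmata~\ref{lem:lip-projY}--\ref{lem:lip-ZU}, and therefore $\delta$, depend only on $\alpha,\beta,\gamma,K_{\Lambda},S$.

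\emph{Contradiction.} Since $O\cap(t_{\max}-\delta,t_{\max})$ is a non-empty open set and $\{t:\boldsymbol{U}(t)\in D_{\mathcal{H}}(\Lambda)\}$ has full measure in $[0,t_{\max})$, pick $\tau$ in their intersection; then $(\boldsymbol{U}(\tau),\boldsymbol{Y}(\tau))\in D(A)$, $t_{\max}-\tau<\delta$, and $\|Z_{\boldsymbol{U}(\tau)}^{-1}\|_2<\gamma$. Applying Proposition~\ref{prop:gen-mild-local} at $(\boldsymbol{U}(\tau),\boldsymbol{Y}(\tau))$ with starting time $\tau$ gives a unique mild solution on $[\tau,\tau+\delta]$, which by uniqueness coincides with $(\boldsymbol{U},\boldsymbol{Y})$ on $[\tau,t_{\max})$; concatenating, we obtain a mild solution of \eqref{eq:Cauchy} on $[0,\tau+\delta]$ with $\tau+\delta>t_{\max}$. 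Because $(\boldsymbol{U}_{0},\boldsymbol{Y}_{0})\in D(A)$ and $G$ is locally Lipschitz along the trajectory (as $Z_{\boldsymbol{U}(t)}^{-1}$ exists throughout $[0,\tau+\delta]$), the regularity argument of Proposition~\ref{prop:Cauchy-stong-local} (Pazy's theorem) upgrades this to a strong solution on $[0,\tau+\delta]$, hence to a Dual DO solution in the strong sense there by Corollary~\ref{cor:DO-stong-local}. This contradicts the definition of $t_{\max}$. Therefore either $t_{\max}=\infty$ or $\lim_{t\uparrow t_{\max}}\|Z_{\boldsymbol{U}(t)}^{-1}\|_2=\infty$, which is the claim. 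The one genuinely non-routine step is the uniformity of $\delta$: one must verify that the Lipschitz constants and admissible radii in Lemmata~\ref{lem:Wedin-less-1}, \ref{lem:lip-projY} and \ref{lem:lip-ZU} are uniform over the a priori ball fixed by $\alpha,\beta,\gamma$, which is precisely what the explicit dependence of those constants was arranged to give; everything else is bookkeeping with the results already established.
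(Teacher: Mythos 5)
Your argument is correct, but it takes a genuinely different route from the paper's. The paper argues in two stages: first it assumes $t_{\max}<\infty$ and $\limsup_{t\uparrow t_{\max}}\|Z_{\boldsymbol{U}(t)}^{-1}\|_2<\infty$, uses the a priori bounds and the mild formula to show that $\boldsymbol{U}(t)$, $\boldsymbol{Y}(t)$ and $Z_{\boldsymbol{U}(t)}^{-1}$ all have continuous extensions to $t_{\max}$ (a Cauchy-criterion argument), and then restarts at $t_{\max}$ to get a contradiction; afterwards, a second, separate argument (choosing interlaced sequences $t_n$, $s_{k_n}$ and an intermediate-value point) upgrades $\limsup=\infty$ to $\lim=\infty$. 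You instead attack $\lim=\infty$ directly by contradicting $\liminf<\infty$: your key observation is that the local existence time $\delta$ in Proposition~\ref{prop:gen-mild-local} can be made uniform over the compact set fixed by the a priori bounds $\alpha,\beta$ and the threshold $\gamma$, because the admissible radii and Lipschitz constants in Lemmata~\ref{lem:Wedin-less-1}, \ref{lem:lip-projY}, \ref{lem:pre-lip-ZU}, \ref{lem:lip-ZU} depend on the base point only through these quantities (with $\sigma_{\hat{\boldsymbol{Y}}}=1$ by orthonormality and $\sup_s\|\mathrm{e}^{sA}\|\le K_\Lambda$ globally). You then restart from a time $\tau$ close to $t_{\max}$ where $\|Z^{-1}\|<\gamma$ and push past $t_{\max}$ in a single step. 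Both arguments are valid. Your version is more economical in that it removes the need for the second ``$\limsup\Rightarrow\lim$'' pass, at the price of having to verify the uniformity of $\delta$ --- which, to your credit, you do justify via the explicit constants (the Neumann-series bound giving $R_0=\min\{1,(2\gamma C'_{\alpha+1,S})^{-1}\}$, and $\overline{\kappa}(1,\beta)$ depending only on $\beta$). One small remark: requiring $\boldsymbol{U}(\tau)\in D_{\mathcal{H}}(\Lambda)$ when choosing $\tau$ is superfluous; Proposition~\ref{prop:gen-mild-local} gives a mild solution without it, and the upgrade to a strong solution on $[0,\tau+\delta]$ only uses $(\boldsymbol{U}_0,\boldsymbol{Y}_0)\in D(A)$ at $t=0$, which you correctly invoke.
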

\begin{proof}
	Under the condition $(\boldsymbol{U}_{0},\boldsymbol{Y}_{0})\in D(A)$,
	it suffices to show the maximality of the mild solution. We show that
	$t_{\max}<\infty$ implies $\lim_{t\uparrow t_{\max}}\|Z_{\boldsymbol{U}(t)}^{-1}\|_{{2}}=\infty$.
	In this regard, we first show $\limsup_{t\uparrow t_{\max}}\|Z_{\boldsymbol{U}(t)}^{-1}\|_{{2}}=\infty$. 
	{We argue by contradiction and assume $t_{\max}<\infty$ and $\limsup_{t\uparrow t_{\max}}\|Z_{\boldsymbol{U}(t)}^{-1}\|_{{2}}<\infty$. }
	{T}hen we have 
	\[
	\sup_{t\in[t_{\max}-\delta,t_{\max})}\|Z_{\boldsymbol{U}(t)}^{-1}\|_{{2}}<\infty\quad\text{ for sufficiently small }\delta>0.
	\]
	Thus, since $\max_{t\in[0,t_{\max}-\delta]}\|Z_{\boldsymbol{U}(t)}^{-1}\|_{{2}}<\infty$
	for any $0<\delta<t_{\max}$, with a constant $K>0$ we have $\|Z_{\boldsymbol{U}(t)}^{-1}\|_{{2}}<K$
	for all $t\in[0,t_{\max})$. Now Lemma~\ref{lem:UY-bdd-UH} implies
	$\|\boldsymbol{Y}(t)\|_{[L^{2}(\Omega)]^{S}}=\sqrt{S}$
	and
	\[
	\|\boldsymbol{U}(t)\|_{[\mathcal{H}]^{S}}\leq\alpha_{\max}:=(\sqrt{2C_{\Lambda,F}}t_{\max}^{1/2}+\|\boldsymbol{U}_{0}\|_{[\mathcal{H}]^{S}}\mathrm{e}^{C_{\Lambda,F}t_{\max}}),\quad t\ensuremath{\in}[0,t_{\max}),
	\]
	and thus in view of Proposition~\ref{prop:loc-Lip-G} we have $\|[G_{1}(\boldsymbol{Y}(s))](\boldsymbol{U}(s))\|_{[\mathcal{H}]^{S}}\leq C_{\alpha_{\max},S}$
	for any $s\in[0,t_{\max})$. If $0<t<t'<t_{\max}$ then letting
	$K_{\Lambda}=\sup_{r\in[0,t_{\max}]}\|\mathrm{e}^{r\Lambda}\|$ we
	have
	\begin{align*}
	\|\boldsymbol{U}(t')&-\boldsymbol{U}(t)\|_{[\mathcal{H}]^{S}} \\
	& \leq\|\mathrm{e}^{t'\Lambda}\boldsymbol{U}(0)-\mathrm{e}^{t\Lambda}\boldsymbol{U}(0)\|_{[\mathcal{H}]^{S}}+\bigg\|\int_{t}^{t'}\mathrm{e}^{(t'-s)\Lambda}\left[G_{1}(\boldsymbol{Y}(s))\right](\boldsymbol{U}(s))\mathrm{d}s\bigg\|_{[\mathcal{H}]^{S}}\\
	& \qquad+\bigg\|\int_{0}^{t}(\mathrm{e}^{(t'-s)\Lambda}-\mathrm{e}^{(t-s)\Lambda})\left[G_{1}(\boldsymbol{Y}(s))\right](\boldsymbol{U}(s))\mathrm{d}s\bigg\|_{[\mathcal{H}]^{S}}\\
	& \leq\|\mathrm{e}^{t'\Lambda}\boldsymbol{U}(0)-\mathrm{e}^{t\Lambda}\boldsymbol{U}(0)\|_{[\mathcal{H}]^{S}}+(t'-t)K_{\Lambda}C_{\alpha_{\max},S}\\
	& \qquad+\int_{0}^{t_{\max}}\|\mathrm{e}^{(t-s)\Lambda}\|\|(\mathrm{e}^{(t'-t)\Lambda}-I)\left[G_{1}(\boldsymbol{Y}(s))\right](\boldsymbol{U}(s))\|_{[\mathcal{H}]^{S}}\mathrm{d}s.
	\end{align*}
	From $\|\left[G_{1}(\boldsymbol{Y}(s))\right](\boldsymbol{U}(s))\|_{[\mathcal{H}]^{S}}\leq C_{\alpha_{\max},S}$,
	the dominated convergence theorem implies that the right hand side
	of
	\begin{align*}
	\int_{0}^{t_{\max}}\|\mathrm{e}^{(t-s)\Lambda}\| & \|(\mathrm{e}^{(t'-t)\Lambda}-I)\left[G_{1}(\boldsymbol{Y}(s))\right](\boldsymbol{U}(s))\|_{[\mathcal{H}]^{S}}\mathrm{d}s\\
	& \leq\int_{0}^{t_{\max}}K_{\Lambda}\|(\mathrm{e}^{(t'-t)\Lambda}-I)\left[G_{1}(\boldsymbol{Y}(s))\right](\boldsymbol{U}(s))\|_{[\mathcal{H}]^{S}}\mathrm{d}s
	\end{align*}
	tends to zero as $t,t'$ tend to $t_{\max}$. Hence, $\|\boldsymbol{U}(t')-\boldsymbol{U}(t)\|_{[\mathcal{H}]^{S}}\to0$
	as $t,t'\to t_{\max}$. Therefore, $\boldsymbol{U}$ admits a continuous
	extension $\lim_{t\uparrow t_{\max}}\boldsymbol{U}(t)=\boldsymbol{U}(t_{\max})$.
	This allows us to extend $Z_{\boldsymbol{U}(t)}^{-1}$ to $[0,t_{\max}]$.
	Indeed, Lemma~\ref{lem:pre-lip-ZU} implies
	\[
	\|Z_{\boldsymbol{U}(t')}^{-1}-Z_{\boldsymbol{U}(t)}^{-1}\|_{{2}}\leq2C_{\alpha_{\max},S}{K^{2}}\|\boldsymbol{U}(t')-\boldsymbol{U}(t)\|_{[\mathcal{H}]^{S}},
	\]
	and thus we have $\lim_{t\uparrow t_{\max}}Z_{\boldsymbol{U}(t)}^{-1}={Z^{*}\in\mathbb{R}^{S\times S}}$
	with ${\|Z^{*}\|_{{2}}\leq K}$, but we must have $Z^{*}=Z_{\boldsymbol{U}(t_{\max})}^{-1}$.
	Similarly, noting that $\|Z_{\boldsymbol{U}(s)}^{-1}\|_{{2}}<K$ implies
	\[\|[G_{2}(\boldsymbol{Y}(s))](\boldsymbol{U}(s))\|_{[L^{2}(\Omega)]^{S}}\leq C_{\alpha_{\max},S,K},\]
	we see that $\lim_{t\uparrow t_{\max}}\boldsymbol{Y}(t)=\boldsymbol{Y}(t_{\max})$
	exists, {and from Corollary~\ref{cor:DO-stong-local} we have
		$\mathbb{E}[\boldsymbol{Y}(t_{\max})\boldsymbol{Y}(t_{\max})^{\top}]=I$}.
	But in view of Proposition~\ref{prop:loc-Lip-G} these consequences
	imply that we can extend the solution beyond $t_{\max}$, which contradicts
	the maximality of $[0,t_{\max})$. Hence, $\limsup_{t\uparrow t_{\max}}\|Z_{\boldsymbol{U}(t)}^{-1}\|_{{2}}=\infty$. 
	
	To conclude the proof we will show 
	\[
	\lim_{t\uparrow t_{\max}}\|Z_{\boldsymbol{U}(t)}^{-1}\|_{{2}}=\infty.
	\]
	If this is false, then there exist a sequence $t_{n}\uparrow t_{\max}$
	and $\gamma>0$ such that $\|Z_{\boldsymbol{U}(t_{n})}^{-1}\|_{{2}}\leq\gamma$
	for all $n\ge0$. But since $\limsup_{t\uparrow t_{\max}}\|Z_{\boldsymbol{U}(t)}^{-1}\|_{{2}}=\infty$
	there is a sequence $s_{k}\uparrow t_{\max}$ such that $\|Z_{\boldsymbol{U}(s_{k})}^{-1}\|_{{2}}\geq\gamma+1$
	for all $k\ge0$. We take a subsequence $(s_{k_{n}})_{n}$ so that
	$t_{n}<s_{k_{n}}$ for all $n$. From the continuity of $t\mapsto\|Z_{\boldsymbol{U}(t)}^{-1}\|_{{2}}$
	on $[t_{n},s_{k_{n}}]$, there exists $h_{n}\in[0,s_{k_{n}}-t_{n}]$
	such that $\|Z_{\boldsymbol{U}(t_{n}+h_{n})}^{-1}\|_{{2}}=\gamma+1$.
	Now, from Lemma~\ref{lem:pre-lip-ZU} we have for any $n\geq0$
	\[
	1
	\!\leq\!
	\|Z_{\boldsymbol{U}(t_{n}+h_{n})}^{-1}\!\!\;\|_{{2}}-\|Z_{\boldsymbol{U}(t_{n})}^{-1}\|_{{2}}
	\!\leq\! C_{\alpha_{\max},S}{(2\gamma^{2}\!\!\;+2\gamma+1)}\|\boldsymbol{U}(t_{n}+h_{n})-\boldsymbol{U}(t_{n})\|_{\!\!\;[\mathcal{H}]^{S}},
	\]
	which is absurd since $|h_{n}|\leq|s_{k_{n}}-t_{\max}|+|t_{\max}-t_{n}|\to0$
	as $n\to\infty$ and $\boldsymbol{U}$ is continuous on $[0,t_{\max})$.
	Hence, the proof is complete.
	\end{proof}
Under a stronger assumption on the non-linear term $F$, we obtain
the following bound for $\|\Lambda\boldsymbol{U}(t)\|_{[\mathcal{H}]^{S}}$.
This bound will be used to establish the existence in the classical
sense on the maximal interval. 
\begin{lemma}
	\label{lem:UY-bdd-U-D(Lam)}Let Assumptions \ref{assu:Lam} and \ref{assu:glob-lin-F-D(A)}
	hold. Suppose that the classical solution $(\boldsymbol{U}(t),\boldsymbol{Y}(t))$
	of the abstract Cauchy problem \eqref{eq:Cauchy} exists on $[0,t^{*}]$
	for some $t^{*}>0$. Then, we have 
	\[
	\|\Lambda\boldsymbol{U}(t)\|_{[\mathcal{H}]^{S}}\leq\text{{\ensuremath{K_{\Lambda}}}}\ensuremath{(}\|\ensuremath{\Lambda}\boldsymbol{U}(0)\|_{[\mathcal{H}]^{S}}+tC_{F}\sqrt{S})\mathrm{e}^{\text{{\ensuremath{K_{\Lambda}}}}C_{F}t}\quad\text{ for }t\in[0,t^{*}],
	\]
	where the constant $C_{F}>0$ is from Assumption \ref{assu:glob-lin-F-D(A)}.
\end{lemma}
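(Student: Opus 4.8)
The plan is to apply $\Lambda$ to the variation-of-constants representation of $\boldsymbol{U}$ and then close a Gr\"onwall inequality. Since a classical solution of \eqref{eq:Cauchy} is in particular a mild solution, on $[0,t^{*}]$ we have
\[
\boldsymbol{U}(t)=\mathrm{e}^{t\Lambda}\boldsymbol{U}(0)+\int_{0}^{t}\mathrm{e}^{(t-\tau)\Lambda}g(\tau)\,\mathrm{d}\tau ,
\]
where $g(\tau):=[G_{1}(\boldsymbol{Y}(\tau))](\boldsymbol{U}(\tau))=\mathbb{E}[F(u_{S}(\tau))\boldsymbol{Y}(\tau)]$ and $u_{S}(\tau):=\boldsymbol{U}(\tau)^{\top}\boldsymbol{Y}(\tau)$. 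Two elementary consequences of the $L^{2}(\Omega)$-orthonormality of $\boldsymbol{Y}(\tau)$ will be used repeatedly: first, $\Lambda u_{S}(\tau)=(\Lambda\boldsymbol{U}(\tau))^{\top}\boldsymbol{Y}(\tau)$, hence $\|\Lambda u_{S}(\tau)\|_{L^{2}(\Omega;\mathcal{H})}=\|\Lambda\boldsymbol{U}(\tau)\|_{[\mathcal{H}]^{S}}$; second, Bessel's inequality gives $\|\mathbb{E}[w\boldsymbol{Y}(\tau)]\|_{[\mathcal{H}]^{S}}\le\|w\|_{L^{2}(\Omega;\mathcal{H})}$ for every $w\in L^{2}(\Omega;\mathcal{H})$.

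Next I would show that $g(\tau)\in D_{\mathcal{H}}(\Lambda)$ with $\Lambda g(\tau)=\mathbb{E}[\Lambda F(u_{S}(\tau))\boldsymbol{Y}(\tau)]$, and that $\Lambda$ may be pulled inside the integral. Since $\boldsymbol{U}(\tau)\in D_{\mathcal{H}}(\Lambda)$ (classical solution) and $\boldsymbol{U}(0)\in D_{\mathcal{H}}(\Lambda)$, we have $\Lambda u_{S}(\tau)\in L^{2}(\Omega;\mathcal{H})$, so Assumption~\ref{assu:glob-lin-F-D(A)} yields $\Lambda F(u_{S}(\tau))\in L^{2}(\Omega;\mathcal{H})$ with
\[
\|\Lambda F(u_{S}(\tau))\|_{L^{2}(\Omega;\mathcal{H})}\le C_{F}\big(1+\|\Lambda\boldsymbol{U}(\tau)\|_{[\mathcal{H}]^{S}}\big);
\]
the closedness of $\Lambda$ then allows it to commute with the Bochner expectation $\mathbb{E}[\,\cdot\,Y_{j}(\tau)]$, with the semigroup $\mathrm{e}^{(t-\tau)\Lambda}$ on $D_{\mathcal{H}}(\Lambda)$, and (using the regularity of the classical solution, which makes $\tau\mapsto\Lambda\boldsymbol{U}(\tau)$, and hence $\tau\mapsto\Lambda g(\tau)$, Bochner integrable on $[0,t^{*}]$) with the integral. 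This gives
\[
\Lambda\boldsymbol{U}(t)=\mathrm{e}^{t\Lambda}\Lambda\boldsymbol{U}(0)+\int_{0}^{t}\mathrm{e}^{(t-\tau)\Lambda}\,\mathbb{E}\big[\Lambda F(u_{S}(\tau))\boldsymbol{Y}(\tau)\big]\,\mathrm{d}\tau .
\]

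Then I would estimate. By Assumption~\ref{assu:Lam} (with $\lambda\ge0$) one has $\|\mathrm{e}^{s\Lambda}\|_{[\mathcal{H}]^{S}\to[\mathcal{H}]^{S}}=\|\mathrm{e}^{s\Lambda}\|_{\mathcal{H}\to\mathcal{H}}\le K_{\Lambda}$ for $s\ge0$, so writing $\phi(t):=\|\Lambda\boldsymbol{U}(t)\|_{[\mathcal{H}]^{S}}$ and combining the last identity with Bessel's inequality, the bound on $\|\Lambda F(u_{S}(\tau))\|_{L^{2}(\Omega;\mathcal{H})}$, and $S\ge1$,
\[
\phi(t)\le K_{\Lambda}\phi(0)+K_{\Lambda}C_{F}\int_{0}^{t}\big(1+\phi(\tau)\big)\,\mathrm{d}\tau\le K_{\Lambda}\phi(0)+K_{\Lambda}C_{F}\sqrt{S}\,t+K_{\Lambda}C_{F}\int_{0}^{t}\phi(\tau)\,\mathrm{d}\tau .
\]
Applying Gr\"onwall's inequality with the nondecreasing forcing term $\alpha(t):=K_{\Lambda}(\phi(0)+C_{F}\sqrt{S}\,t)$ gives $\phi(t)\le\alpha(t)\,\mathrm{e}^{K_{\Lambda}C_{F}t}$, which is exactly the claimed estimate.

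The step I expect to be the main obstacle is the second one: rigorously justifying that $g(\tau)\in D_{\mathcal{H}}(\Lambda)$ and that $\Lambda$ commutes with the expectation $\mathbb{E}[\,\cdot\,\boldsymbol{Y}(\tau)]$ and with the Bochner integral. This hinges on the closedness of $\Lambda$ (via Hille's theorem for Bochner integrals of closed operators), on the membership $\boldsymbol{U}(\tau)\in D_{\mathcal{H}}(\Lambda)$ built into the classical-solution notion, and on enough regularity of $\tau\mapsto\Lambda F(u_{S}(\tau))$ to ensure integrability; once these are in place, the remaining computation is the routine Gr\"onwall argument above.
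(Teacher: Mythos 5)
Your proof is correct, and it follows the same overall route as the paper: take the variation-of-constants form of $\boldsymbol{U}$, commute $\Lambda$ with the semigroup, the expectation and the Bochner integral (justified by closedness of $\Lambda$ and the classical-solution regularity), invoke Assumption~\ref{assu:glob-lin-F-D(A)}, and close with Gr\"onwall. There is one difference worth noting, and it actually works in your favour. The paper bounds
\[
\Big\|\int_0^t \mathrm{e}^{(t-\tau)\Lambda}\,\mathbb{E}[\Lambda F(u_S)\boldsymbol{Y}]\,\mathrm{d}\tau\Big\|_{[\mathcal{H}]^S}
\leq K_\Lambda\int_0^t \mathbb{E}\big[\|\Lambda F(u_S)\boldsymbol{Y}\|_{[\mathcal{H}]^S}\big]\mathrm{d}\tau
\leq K_\Lambda \sqrt{S}\,C_F\int_0^t\big(1+\|\Lambda\boldsymbol{U}(\tau)\|_{[\mathcal{H}]^S}\big)\mathrm{d}\tau,
\]
i.e.\ it first moves the norm inside the expectation and then applies Cauchy--Schwarz with $\mathbb{E}[|\boldsymbol{Y}|^2]=S$, which produces a $\sqrt{S}$ on \emph{both} the constant and the integral term; read literally, this would force $\mathrm{e}^{K_\Lambda C_F\sqrt{S}\,t}$ in the conclusion, whereas the paper's final display drops the $\sqrt{S}$ from the Gr\"onwall kernel. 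You instead use the Bessel bound $\|\mathbb{E}[w\boldsymbol{Y}]\|_{[\mathcal{H}]^S}\le\|w\|_{L^2(\Omega;\mathcal{H})}$ directly on the $[\mathcal{H}]^S$-valued expectation, which is sharper, avoids the extra $\sqrt{S}$ entirely, and therefore yields exactly the exponent $\mathrm{e}^{K_\Lambda C_F t}$ stated in the lemma (the $\sqrt{S}$ in the linear-in-$t$ term you then reinstate harmlessly via $S\ge1$). So your argument actually repairs a small inconsistency in the paper's own estimate chain. You also make explicit the commutation/domain checks that the paper performs silently, which is the right place to be careful.
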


\begin{proof}
	We have
	\begin{align*}
	\Lambda\boldsymbol{U}(t) & =\Lambda e^{t\Lambda}\boldsymbol{U}(0)+\int_{0}^{t}\Lambda e^{(t-\tau)\Lambda}\mathbb{E}[F(\boldsymbol{U}(\tau)^{\top}\boldsymbol{Y}(\tau))\boldsymbol{Y}(\tau)]\mathrm{d}\tau\\
	& =e^{t\Lambda}\Lambda\boldsymbol{U}(0)+\int_{0}^{t}e^{(t-\tau)\Lambda}\Lambda\mathbb{E}[F(\boldsymbol{U}(\tau)^{\top}\boldsymbol{Y}(\tau))\boldsymbol{Y}(\tau)]\mathrm{d}\tau\\
	& =e^{t\Lambda}\Lambda\boldsymbol{U}(0)+\int_{0}^{t}e^{(t-\tau)\Lambda}\mathbb{E}[\Lambda F(\boldsymbol{U}(\tau)^{\top}\boldsymbol{Y}(\tau))\boldsymbol{Y}(\tau)]\mathrm{d}\tau,
	\end{align*}
	and thus, noting that Assumption \ref{assu:Lam} implies $\|e^{s\Lambda}\|_{[\mathcal{H}]^{S}\to[\mathcal{H}]^{S}}\leq\text{{\ensuremath{K_{\Lambda}}}}$,
	$s\ge0$, we get
	\[
	\|\Lambda\boldsymbol{U}(t)\|_{[\mathcal{H}]^{S}}\leq{K_{\Lambda}}\|\Lambda\boldsymbol{U}(0)\|_{[\mathcal{H}]^{S}}+{K_{\Lambda}}\int_{0}^{t}\mathbb{E}[\|\Lambda F(\boldsymbol{U}(\tau)^{\top}\boldsymbol{Y}(\tau))\boldsymbol{Y}(\tau)\|_{[\mathcal{H}]^{S}}]\mathrm{d}\tau.
	\]
	From $\mathbb{E}[|\boldsymbol{Y}(\tau)|^{2}]=\boldsymbol{1}$ and
	Assumption~\ref{assu:glob-lin-F-D(A)}, we have
	\begin{align*}
	\mathbb{E}[\|\Lambda F(\boldsymbol{U}(\tau)^{\top}\boldsymbol{Y}(\tau))\boldsymbol{Y}(\tau)\|_{[\mathcal{H}]^{S}}] & \leq\|\Lambda F(\boldsymbol{U}(\tau)^{\top}\boldsymbol{Y}(\tau))\|_{L^{2}(\Omega;\mathcal{H})}\sqrt{S}\\
	& \leq C_{F}(1+\|\Lambda\boldsymbol{U}(\tau)\|_{[\mathcal{H}]^{S}})\sqrt{S}.
	\end{align*}
	Hence, we get \[\|\Lambda\boldsymbol{U}(t)\|_{[\mathcal{H}]^{S}}\leq{K_{\Lambda}}(\|\Lambda\boldsymbol{U}(0)\|_{[\mathcal{H}]^{S}}+tC_{F}\sqrt{S})+{K_{\Lambda}}C_{F}\int_{0}^{t}\|\Lambda\boldsymbol{U}(\tau)\|_{[\mathcal{H}]^{S}}\mathrm{d}\tau.\]
	Then, applying the Gronwall's inequality completes the proof.
	\end{proof}
\begin{theorem}[Dual DO-classical, maximal]\label{thm:classical-maximal} Suppose Assumptions
	\ref{assu:Lam}--\ref{assu:glob-lin-F-D(A)} are satisfied. Suppose
	further that the initial condition $\boldsymbol{U}_{0}\in[\mathcal{H}]^{S}$
	is linearly independent in $\mathcal{H}$, and $\boldsymbol{Y}_{0}\in[L^{2}(\Omega)]^{S}$
	is orthonormal in $L^{2}(\Omega)$. Further, suppose that $(\boldsymbol{U}_{0},\boldsymbol{Y}_{0})\in D(A)$.
	Then, there exists $t_{\max}>0$ such that the Dual DO approximation
	uniquely exists in the classical sense on $\left[0,t_{\max}\right)$.
	The approximation can be extended in time until the Gram matrix $Z_{\boldsymbol{U}}$
	of $\boldsymbol{U}$ becomes singular: we have either 
	\[
	t_{\max}=\infty,\quad\text{or}\quad\lim_{t\uparrow t_{\max}}\|Z_{\boldsymbol{U}(t)}^{-1}\|_{{2}}=\infty.
	\]
\end{theorem}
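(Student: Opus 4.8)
The plan is to mirror the proof of Theorem~\ref{thm:strong-maximal}, but carried out in the graph-norm space $[\mathcal V]^S\oplus[L^2(\Omega)]^S$ rather than in $\mathcal X$. By Corollary~\ref{cor:classical-local}, under the stated assumptions there is $t^*>0$ on which the Dual DO approximation uniquely exists in the classical sense; let $[0,t_{\max})$ be the maximal interval of existence of such a solution, which is well defined by the usual union-of-intervals argument together with uniqueness. It then remains to show that if $t_{\max}<\infty$, then $\lim_{t\uparrow t_{\max}}\|Z_{\boldsymbol U(t)}^{-1}\|_{2}=\infty$.

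First I would argue by contradiction and suppose $t_{\max}<\infty$ and $\limsup_{t\uparrow t_{\max}}\|Z_{\boldsymbol U(t)}^{-1}\|_{2}<\infty$, so that, exactly as in Theorem~\ref{thm:strong-maximal}, there is $K>0$ with $\|Z_{\boldsymbol U(t)}^{-1}\|_{2}<K$ for all $t\in[0,t_{\max})$. Lemma~\ref{lem:UY-bdd-UH} then gives $\|\boldsymbol Y(t)\|_{[L^2(\Omega)]^S}=\sqrt S$ and a uniform bound $\|\boldsymbol U(t)\|_{[\mathcal H]^S}\le\alpha_{\max}$ on $[0,t_{\max})$, while Lemma~\ref{lem:UY-bdd-U-D(Lam)} (using Assumption~\ref{assu:glob-lin-F-D(A)}) furnishes in addition a uniform bound $\|\Lambda\boldsymbol U(t)\|_{[\mathcal H]^S}\le\rho_{\max}$ on $[0,t_{\max})$; in particular $\boldsymbol U(t)$ stays bounded in $[\mathcal V]^S$. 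Now I would repeat the Cauchy-sequence estimate of Theorem~\ref{thm:strong-maximal}, but with every norm replaced by the $[\mathcal V]^S$-norm: using the variation-of-constants formula and splitting $\|\boldsymbol U(t')-\boldsymbol U(t)\|_{[\mathcal V]^S}$ into the semigroup term $\|(\mathrm e^{t'A}-\mathrm e^{tA})(\boldsymbol U_0,\boldsymbol Y_0)\|$, the short-interval integral $\int_t^{t'}$, and the tail integral $\int_0^t(\mathrm e^{(t'-s)\Lambda}-\mathrm e^{(t-s)\Lambda})[G_1(\boldsymbol Y(s))](\boldsymbol U(s))\,\mathrm ds$. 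The integrand $[G_1(\boldsymbol Y(s))](\boldsymbol U(s))=\mathbb{E}[F(\boldsymbol U(s)^\top\boldsymbol Y(s))\boldsymbol Y(s)]$ is uniformly bounded in $[\mathcal V]^S$ thanks to Assumptions~\ref{assu:lip-F-H}--\ref{assu:lip-F-D(Lam)} (i.e.\ the Lipschitz/boundedness of $F$ as a map into $L^2(\Omega;\mathcal V)$) together with the uniform bounds just obtained, so strong continuity of $(\mathrm e^{tA})$ on $[\mathcal V]^S\oplus[L^2(\Omega)]^S$ (noted in the proof of Corollary~\ref{cor:classical-local}) and the dominated convergence theorem give $\|\boldsymbol U(t')-\boldsymbol U(t)\|_{[\mathcal V]^S}\to0$ as $t,t'\uparrow t_{\max}$. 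Hence $\boldsymbol U$ admits a continuous extension in $[\mathcal V]^S$ to $\boldsymbol U(t_{\max})$ with $U_j(t_{\max})\in D_{\mathcal H}(\Lambda)$; an analogous, easier estimate, using $\|Z_{\boldsymbol U(s)}^{-1}\|_{2}<K$ to bound $[G_2(\boldsymbol U(s))](\boldsymbol Y(s))$ in $[L^2(\Omega)]^S$, yields the continuous extension $\boldsymbol Y(t_{\max})$, with $\mathbb{E}[\boldsymbol Y(t_{\max})\boldsymbol Y(t_{\max})^\top]=I$ by Corollary~\ref{cor:DO-stong-local}; and Lemma~\ref{lem:pre-lip-ZU} shows $Z_{\boldsymbol U(t)}^{-1}\to Z_{\boldsymbol U(t_{\max})}^{-1}$, so $\boldsymbol U(t_{\max})$ is still linearly independent.

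Then I would restart: since $(\boldsymbol U(t_{\max}),\boldsymbol Y(t_{\max}))\in D(A)$ with $\boldsymbol U(t_{\max})$ linearly independent and $\boldsymbol Y(t_{\max})$ orthonormal, the local classical existence result of Corollary~\ref{cor:classical-local} (i.e.\ the $[\mathcal V]^S\oplus[L^2(\Omega)]^S$-analogue of Propositions~\ref{prop:gen-mild-local}--\ref{prop:Cauchy-stong-local} used there) produces a classical Dual DO solution on $[t_{\max},t_{\max}+\delta]$; by uniqueness it glues with the solution on $[0,t_{\max})$ to a classical solution beyond $t_{\max}$, contradicting maximality. Therefore $\limsup_{t\uparrow t_{\max}}\|Z_{\boldsymbol U(t)}^{-1}\|_{2}=\infty$, and the promotion of this $\limsup$ to a genuine limit is verbatim the final paragraph of the proof of Theorem~\ref{thm:strong-maximal} (an intermediate-value argument on the continuous map $t\mapsto\|Z_{\boldsymbol U(t)}^{-1}\|_{2}$ combined with the local Lipschitz bound of Lemma~\ref{lem:pre-lip-ZU} and the continuity of $\boldsymbol U$ on $[0,t_{\max})$). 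I expect the main obstacle to be the graph-norm Cauchy estimate for $\boldsymbol U$: one must justify the commutations $\Lambda\mathrm e^{s\Lambda}=\mathrm e^{s\Lambda}\Lambda$ and $\Lambda\,\mathbb{E}[F(\cdot)\boldsymbol Y]=\mathbb{E}[\Lambda F(\cdot)\boldsymbol Y]$ on $D_{\mathcal H}(\Lambda)$ (as in Lemma~\ref{lem:UY-bdd-U-D(Lam)}), and verify that the dominating function for the tail integral is $[L^2(\Omega)]^S$-integrable uniformly in $t,t'$, which is precisely where Assumption~\ref{assu:lip-F-D(Lam)} and the bound $\|\Lambda\boldsymbol U(t)\|_{[\mathcal H]^S}\le\rho_{\max}$ of Lemma~\ref{lem:UY-bdd-U-D(Lam)} enter; the remainder is a routine transcription of the argument for Theorem~\ref{thm:strong-maximal}.
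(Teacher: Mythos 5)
Your proposal is essentially the paper's proof: both lift the argument of Theorem~\ref{thm:strong-maximal} to the graph-norm space $[\mathcal V]^S\oplus[L^2(\Omega)]^S$, using Lemma~\ref{lem:UY-bdd-UH} and Lemma~\ref{lem:UY-bdd-U-D(Lam)} to get the a priori bounds needed for the Cauchy estimate and the continuous extension of $(\boldsymbol U,\boldsymbol Y)$ and $Z_{\boldsymbol U}^{-1}$ to $t_{\max}$, then deriving a contradiction with maximality; the $\limsup$-to-$\lim$ promotion is identical. The one slip is an attribution: you credit the uniform bound on $\|[G_1(\boldsymbol Y(s))](\boldsymbol U(s))\|_{[\mathcal V]^S}$ over the whole interval $[0,t_{\max})$ to Assumptions~\ref{assu:lip-F-H}--\ref{assu:lip-F-D(Lam)}, but those only give \emph{local} Lipschitz and boundedness of $F$ in the $\mathcal V$-norm, which does not by itself yield a bound uniform over a bounded set. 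What actually supplies the global bound is Assumption~\ref{assu:glob-lin-F-D(A)} (the uniform linear growth of $\Lambda F$), applied together with the uniform bound $\|\Lambda\boldsymbol U(t)\|_{[\mathcal H]^S}\le\rho_{\max}$ from Lemma~\ref{lem:UY-bdd-U-D(Lam)}; this is exactly what the paper invokes. Since Assumption~\ref{assu:glob-lin-F-D(A)} is in your hypotheses and you already use Lemma~\ref{lem:UY-bdd-U-D(Lam)}, the fix is cosmetic and the overall argument stands.
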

\begin{proof}
	Our argument is analogous to the proof of Theorem~\ref{thm:strong-maximal},
	but here we consider the parameter equation in $[\mathcal{V}]^{S}\oplus[L^{2}(\Omega)]^{S}$.
	The only difference thus is the equation for $\boldsymbol{U}$, but
	Lemmata~\ref{lem:UY-bdd-UH} and \ref{lem:UY-bdd-U-D(Lam)} give
	a bound for $\|\boldsymbol{U}(t)\|_{[\mathcal{V}]^{S}}$, $t\in[0,t_{\max})$,
	and Assumption~\ref{assu:glob-lin-F-D(A)} gives a bound for $\|\left[G_{1}(\boldsymbol{Y}(s))\right](\boldsymbol{U}(s))\|_{[\mathcal{V}]^{S}}$,
	$s\in[0,t_{\max})$. 
	Further, we have \[\sup_{r\in[0,t_{\max}]}\|\mathrm{e}^{r\Lambda}\|_{[\mathcal{V}]^{S}\to[\mathcal{V}]^{S}}\leq\sup_{r\in[0,t_{\max}]}\|\mathrm{e}^{r\Lambda}\|_{[\mathcal{H}]^{S}\to[\mathcal{H}]^{S}}\leq K_{\Lambda},\]
	and $\|\mathrm{e}^{s\Lambda}-\mathrm{e}^{t\Lambda}\|_{[\mathcal{V}]^{S}\to[\mathcal{V}]^{S}}\leq\|\mathrm{e}^{s\Lambda}-\mathrm{e}^{t\Lambda}\|_{[\mathcal{H}]^{S}\to[\mathcal{H}]^{S}}$. 
	Noting that $(\boldsymbol{U},\boldsymbol{Y})$ is also a mild solution
	in $\mathcal{X}$, the extension of $Z_{\boldsymbol{U}}^{-1}$ can
	be established. Hence, we see that the mild solution in $[\mathcal{V}]^{S}\oplus[L^{2}(\Omega)]^{S}$
	exists on $[0,t_{\max})$, and that if $t_{\max}<\infty$ then $\lim_{t\uparrow t_{\max}}\|Z_{\boldsymbol{U}(t)}^{-1}\|_{{2}}=\infty$.
	But in view of \cite[Corollary 4.2.6, Theorem 6.1.7]{Pazy.A_1983_book}
	this is a classical solution, and thus the proof is complete. 
	\end{proof}
We are now interested in continuing the DLR approximation $u_{S}$
beyond the maximal time $t_{\max}$. A difficulty arising is the full
rank condition imposed on $M_{S}$: at $t_{\max}$ the spatial basis
becomes linearly dependent, and thus the solution will not stay in
$M_{S}$. But from a practical point of view this should be favourable---roughly
speaking, at the maximal time a smaller number of spatial basis is
sufficient to capture the same information as $\boldsymbol{U}$ does.
This observation motivates us to leave $M_{S}$: to extend the approximation
beyond $t_{\max}$ we consider the extension to $t_{\max}$ in the
ambient space $L^{2}(\Omega;\mathcal{H})$. To do so, we go back to
the original formulation {\eqref{eq:original-eq-with-proj}}.
Then, upon extending the solution to $t_{\max}$, one can re-start
from $t_{\max}$ with a suitable decomposition as the initial condition.
\begin{proposition}
	\begin{sloppypar}
		Let the assumptions of Theorem~\ref{thm:classical-maximal}
		hold. Then, with the classical solution $(\boldsymbol{U},\boldsymbol{Y})$
		as in Theorem~\ref{thm:classical-maximal}, $u_{S}=\boldsymbol{U}^{\top}\boldsymbol{Y}\colon[0,t_{\max})\to L^{2}(\Omega;\mathcal{H})$
		is Lipschitz continuous. Thus, $u_{S}$ admits a unique continuous
		extension to $[0,t_{\max}]$. 
	\end{sloppypar}
\end{proposition}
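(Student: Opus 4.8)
The plan is to bound $\dfrac{\partial u_S}{\partial t}$ uniformly in $L^2(\Omega;\mathcal{H})$ on $[0,t_{\max})$ and then integrate; a Lipschitz map into the complete space $L^2(\Omega;\mathcal{H})$ automatically extends (uniquely, and again Lipschitzly) to the closed interval. Most of the analytic work has in fact already been done in Lemmata~\ref{lem:UY-bdd-UH}--\ref{lem:UY-bdd-U-D(Lam)}, so the proof is largely an assembly; the one conceptual point worth flagging is made in the third paragraph below.

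First I would collect the a priori bounds. Since, by Theorem~\ref{thm:classical-maximal}, the classical solution $(\boldsymbol{U},\boldsymbol{Y})$ exists on every $[0,T']\subset[0,t_{\max})$, Lemmata~\ref{lem:UY-bdd-UH} and \ref{lem:UY-bdd-U-D(Lam)} apply on each such subinterval; as their right-hand sides are non-decreasing in $t$ and finite at $t=t_{\max}$, there are finite constants $\alpha_{\max},\beta_{\max}$ with $\|\boldsymbol{U}(t)\|_{[\mathcal{H}]^S}\le\alpha_{\max}$, $\|\Lambda\boldsymbol{U}(t)\|_{[\mathcal{H}]^S}\le\beta_{\max}$ for all $t\in[0,t_{\max})$, and $\boldsymbol{Y}(t)$ stays $L^2(\Omega)$-orthonormal there. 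Using this orthonormality, $\|u_S(t)\|_{L^2(\Omega;\mathcal{H})}=\|\boldsymbol{U}(t)\|_{[\mathcal{H}]^S}\le\alpha_{\max}$ and, since $\Lambda u_S(t)=(\Lambda\boldsymbol{U}(t))^{\top}\boldsymbol{Y}(t)$ with $U_j(t)\in D_{\mathcal{H}}(\Lambda)$, also $\|\Lambda u_S(t)\|_{L^2(\Omega;\mathcal{H})}=\|\Lambda\boldsymbol{U}(t)\|_{[\mathcal{H}]^S}\le\beta_{\max}$. Consequently $u_S(t)$ ranges in a bounded ball of $L^2(\Omega;\mathcal{H})$, so Assumption~\ref{assu:lip-F-H} (local Lipschitz continuity together with finiteness of $F$ at some point) yields $M_F<\infty$ with $\|F(u_S(t))\|_{L^2(\Omega;\mathcal{H})}\le M_F$ on $[0,t_{\max})$.

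Next I would invoke the equation. By Theorem~\ref{thm:classical-maximal} together with Proposition~\ref{prop:existence-equiv}, $u_S=\boldsymbol{U}^{\top}\boldsymbol{Y}$ is a classical solution of \eqref{eq:original-eq-with-proj}, so $\frac{\partial u_S}{\partial t}(t)=\mathscr{P}_{u_S(t)}\big(\Lambda u_S(t)+F(u_S(t))\big)$ for $t\in(0,t_{\max})$; since $\mathscr{P}_{u_S(t)}$ is the $L^2(\Omega;\mathcal{H})$-orthogonal projection onto the tangent space $T_{u_S(t)}M_S$ it has operator norm at most one, whence $\big\|\frac{\partial u_S}{\partial t}(t)\big\|_{L^2(\Omega;\mathcal{H})}\le\beta_{\max}+M_F=:L$ for all $t\in(0,t_{\max})$. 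The point I would emphasise here --- and the only subtlety --- is that one cannot obtain this by differentiating $u_S=\boldsymbol{U}^{\top}\boldsymbol{Y}$ and estimating $\dot{\boldsymbol{U}}$ and $\dot{\boldsymbol{Y}}$ separately, because $\dot{\boldsymbol{Y}}$ involves $Z_{\boldsymbol{U}}^{-1}$, which may blow up as $t\uparrow t_{\max}$. It is precisely because the projection is a contraction and $\Lambda u_S+F(u_S)$ stays bounded (the latter resting on Assumption~\ref{assu:glob-lin-F-D(A)} via Lemma~\ref{lem:UY-bdd-U-D(Lam)}, which is why the classical-regularity assumptions are needed) that $\dot u_S$ remains bounded even though $u_S$ is leaving $M_S$.

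Finally, since $u_S$ is absolutely continuous on each $[0,T']\subset[0,t_{\max})$, we have $u_S(t')-u_S(t)=\int_t^{t'}\frac{\partial u_S}{\partial s}(s)\,\mathrm{d}s$ and hence $\|u_S(t')-u_S(t)\|_{L^2(\Omega;\mathcal{H})}\le L\,|t'-t|$ for $0\le t\le t'<t_{\max}$; thus $u_S$ is $L$-Lipschitz on $[0,t_{\max})$. Being Lipschitz, hence uniformly continuous, and $L^2(\Omega;\mathcal{H})$ being complete, $u_S$ maps sequences $t_n\uparrow t_{\max}$ to Cauchy sequences with a common limit, so $\lim_{t\uparrow t_{\max}}u_S(t)$ exists and, set as $u_S(t_{\max})$, gives the unique continuous (indeed Lipschitz) extension of $u_S$ to $[0,t_{\max}]$.
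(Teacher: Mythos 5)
Your proof is correct and follows essentially the same route as the paper's: bound $\|\dot u_S\|$ uniformly on $[0,t_{\max})$ using the contraction property of the orthogonal projection $\mathscr{P}_{u_S}$ together with the a priori bounds from Lemmata~\ref{lem:UY-bdd-UH}--\ref{lem:UY-bdd-U-D(Lam)} and the orthonormality of $\boldsymbol{Y}$, then integrate. Your explicit remark that one must pass through the projected form of the equation rather than estimating $\dot{\boldsymbol{U}}$ and $\dot{\boldsymbol{Y}}$ separately (since $\dot{\boldsymbol{Y}}$ involves $Z_{\boldsymbol{U}}^{-1}$, which may blow up as $t\uparrow t_{\max}$) is precisely the point underlying the paper's first inequality, where it is left implicit.
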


\begin{proof}
	Noting that $u_S$ is absolutely continuous on $[0,t]\subset [t_{\max})$, for any $0\leq t'<t<t_{\max}$ we have
	\[
	\|u_{S}(t)-u_{S}(t')\|_{L^{2}(\Omega;\mathcal{H})}\leq\int_{t'}^{t}\big(\|\Lambda u_{S}(r)\|_{L^{2}(\Omega;\mathcal{H})}+\|F(u_{S}(r))\|_{L^{2}(\Omega;\mathcal{H})}\big)\mathrm{d}r.
	\]
	But from Lemma~\ref{lem:UY-bdd-U-D(Lam)} and Assumption~\ref{assu:lip-F-H}
	we have
	\[
	\|\Lambda u_{S}(r)\|_{L^{2}(\Omega;\mathcal{H})}\leq\sum_{j=1}^{S}\|\Lambda U_{j}(r)\|\leq\sqrt{S}(K_{\Lambda}\|\Lambda\boldsymbol{U}(0)\|_{[\mathcal{H}]^{S}}+t_{\max}C_{F}\sqrt{S}),
	\]
	and $
	\|F(u_{S}(r))\|_{L^{2}(\Omega;\mathcal{H})}\leq C_{t_{\max},K_{\Lambda},F}
	$ for some constant $C_{t_{\max},K_{\Lambda},F}\!>0$.  
	Hence, we obtain
	\begin{align*}
	\|u_{S}(t)&-u_{S}(t')\|_{L^{2}(\Omega;\mathcal{H})}\\
	&\leq(t-t')\big(\sqrt{S}(K_{\Lambda}\|\Lambda\boldsymbol{U}(0)\|_{[\mathcal{H}]^{S}}+t_{\max}C_{F}\sqrt{S})+C_{t_{\max},K_{\Lambda},F}\big),
	\end{align*}
	and thus $u_{S}$ admits a continuous extension $u_{S}(t)\to u^{*}=:u_{S}(t_{\max})$
	as $t\uparrow t_{\max}$.
	\end{proof}

Finally, we will show the existence of smooth parametrisation given a smooth curve $[0,T]\ni t\mapsto u_S(t)\in M_S$, announced in Proposition~\ref{prop:smooth-SVD}. 
Our argument is similar to the existence proofs in this section thus far. We note, however, that since $\dot{u}_S$ is assumed to be given, and no unbounded operator is involved, although the equation depends on time via  $\dot{u}_S$ the proof is simpler.
\begin{proof}[Proof of Proposition~\ref{prop:smooth-SVD}]
	Consider the following  ordinary differential equation in $\mathbb{R}^{S\times S}\oplus[\mathcal{H}]^{S}\oplus[L^{2}(\Omega)]^{S}$: 
	\begin{align*}
	\dot{\Sigma}&=\mathbb{E}[\langle\tilde{\boldsymbol{V}},\dot{u}_{S}\boldsymbol{W}^{\top}\rangle]\\
	\dot{\tilde{\boldsymbol{V}}}^{\top}\Sigma&=\mathbb{E}[\dot{u}_{S}\boldsymbol{W}^{\top}]-\tilde{\boldsymbol{V}}^{\top}\langle\tilde{\boldsymbol{V}}\mathbb{E}[\dot{u}_{S}\boldsymbol{W}^{\top}]\rangle=:(I-P_{\tilde{\boldsymbol{V}}})\big(\mathbb{E}[\dot{u}_{S}\boldsymbol{W}^{\top}]\big)\\
	\Sigma\dot{\boldsymbol{W}}
	&
	=\langle\tilde{\boldsymbol{V}},\dot{u}_{S}\rangle-\mathbb{E}[\langle\tilde{\boldsymbol{V}},\dot{u}_{S}\rangle\boldsymbol{W}^{\top}]\boldsymbol{W}=:(I-P_{\boldsymbol{W}})\langle\tilde{\boldsymbol{V}},\dot{u}_{S}\rangle.
	\end{align*}
	If this equation
	has a solution $(\Sigma,\tilde{\boldsymbol{V}},\boldsymbol{W})$ with
	the desired smoothness, then the statement follows.
	
	But from $\dot{u}_{S}\in L^{1}([0,T];L^{2}(\Omega;\mathcal{H}))$
	and the local Lipschitz continuity of the projection-operator-valued
	mappings, see Lemma~\ref{lem:lip-projY}, there exists a unique solution locally in time. 
	Moreover,
	any solution $\tilde{\boldsymbol{V}}$ and $\boldsymbol{W}$ must
	preserve the orthogonality, see the proof of Corollary~\ref{cor:DO-stong-local}.  Furthermore, Lemma~\ref{lem:uS-SVD}
	guarantees the stability and the invertibility of $\Sigma$
	on $[0,T]$. 
	Thus, following an  argument similar to that of the proof of  Theorem~\ref{thm:strong-maximal}, we observe that the solution $(\Sigma,\tilde{\boldsymbol{V}},\boldsymbol{W})$
	can be uniquely extended to $[0,T]$. Now the proof is complete.
	
	The proof for the continuous differentiability is analogous.
	\end{proof}
\section{Conclusions\label{sec:conclusions}}

We established the existence of the dynamical low rank (DLR) approximation
for random semi-linear evolutionary equations on the maximal interval.
A key was to consider an equivalent formulation, the Dual DO formulation.
After showing that the Dual DO formulation is indeed equivalent, we
showed the unique existence of the solution in the strong and classical
sense, by invoking results for the abstract Cauchy problem in the
vector spaces. Further, we considered a continuation of the DLR approximation
beyond the maximal time interval.

\section*{Acknowledgements}
We thank Eva Vidli\v{c}kov\'a for helpful discussions.  
This work has been supported by the Swiss National Science Foundation under the Project n.\ 172678 ``Uncertainty Quantification techniques for PDE constrained optimization and random evolution equations''. The authors also acknowledge the support from the Center for ADvanced MOdeling Science (CADMOS).

\bibliographystyle{plain}
\bibliography{mybib.bib}

\begin{thebibliography}{10}

\bibitem{Absil.P_etal_2008_book}
P.-A. Absil, R.~Mahony, and R.~Sepulchre.
\newblock {\em {Optimization Algorithms on Matrix Manifolds}}.
\newblock Princeton University Press, Princeton, jan 2008.

\bibitem{Bardos.C_etal_2010}
C. Bardos, I. Catto, N. Mauser, and S. Trabelsi.
\newblock {Setting and Analysis of the Multi-configuration Time-dependent
  Hartree--Fock Equations}.
\newblock {\em Arch. Ration. Mech. Anal.}, 198(1):273--330, oct 2010.

\bibitem{Chern.J_Dieci_2000_SIMAX_smooth_decomp}
J.-L. Chern and L. Dieci.
\newblock {Smoothness and Periodicity of Some Matrix Decompositions}.
\newblock {\em SIAM J. Matrix Anal. Appl.}, 22(3):772--792, jan 2001.

\bibitem{Coleman.R_2012_book}
R. Coleman.
\newblock {\em {Calculus on Normed Vector Spaces}}.
\newblock Universitext. Springer New York, New York, NY, 2012.

\bibitem{Conte.D_Lubich_2010_error_MCTDH}
D. Conte and C. Lubich.
\newblock {An error analysis of the multi-configuration time-dependent Hartree
  method of quantum dynamics}.
\newblock {\em ESAIM Math. Model. Numer. Anal.}, 44(4):759--780, jul 2010.

\bibitem{Dieci.L_Eirola_1999_SIMAX}
L. Dieci and T. Eirola.
\newblock {On Smooth Decompositions of Matrices}.
\newblock {\em SIAM J. Matrix Anal. Appl.}, 20(3):800--819, jan 1999.

\bibitem{Falco.A_Hackbusch_Nouy_2019_FoCM}
A. Falc{\'{o}}, W. Hackbusch, and A. Nouy.
\newblock {On the Dirac--Drenkel Variational Principle on Tensor Banach
  Spaces}.
\newblock {\em Found. Comput. Math.}, 19(1):159--204, feb 2019.

\bibitem{Feppon.F_Lermusiaux_2018_geometric}
F. Feppon and P.~F.~J. Lermusiaux.
\newblock {A Geometric Approach to Dynamical Model Order Reduction}.
\newblock {\em SIAM J. Matrix Anal. Appl.}, 39(1):510--538, jan 2018.

\bibitem{Feppon.F_Lermusiaux_2018_SIREV}
F. Feppon and P.~F.~J. Lermusiaux.
\newblock {Dynamically Orthogonal Numerical Schemes for Efficient Stochastic
  Advection and Lagrangian Transport}.
\newblock {\em SIAM Rev.}, 60(3):595--625, jan 2018.

\bibitem{Horn.R_Johnson_book_2013_2nd}
R.~A. Horn and C.~R. Johnson.
\newblock {\em {Matrix analysis}}.
\newblock Cambridge University Press, Cambridge, second edition, 2013.

\bibitem{Kato.T_1958_two_proj}
T. Kato.
\newblock {Perturbation theory for nullity, deficiency and other quantities of
  linear operators}.
\newblock {\em J. d'Analyse Math{\'{e}}matique}, 6(1):261--322, dec 1958.

\bibitem{Kato.T_book_1995_reprint}
T. Kato.
\newblock {\em {Perturbation Theory for Linear Operators}}.
\newblock Classics in Mathematics. Springer-Verlag, Berlin, 1995.

\bibitem{Kobayashi.S_Nomizu_1996_I}
S. Kobayashi and K. Nomizu.
\newblock {\em {{F}oundations of {D}ifferential {G}eometry}}.
\newblock Wiley Classics Library. John Wiley \& Sons, Inc., New York, 1996.

\bibitem{Koch.O_Lubich_2011_IMA}
O. Koch and C. Lubich.
\newblock {Variational-splitting time integration of the multi-configuration
  time-dependent Hartree-Fock equations in electron dynamics}.
\newblock {\em IMA J. Numer. Anal.}, 31(2):379--395, apr 2011.

\bibitem{Koch.O_Lubich_2007_regularity_existence}
O. Koch and C. Lubich.
\newblock {Regularity of the multi-configuration time-dependent Hartree
  approximation in quantum molecular dynamics}.
\newblock {\em ESAIM Math. Model. Numer. Anal.}, 41(2):315--331, mar 2007.

\bibitem{Miyadera.I_1992_book}
I. Miyadera.
\newblock {\em {Nonlinear semigroups}}.
\newblock American Mathematical Society, Providence, R.I., 1992.

\bibitem{Moore.M.H_1973_matrix_convex}
M.~H. Moore.
\newblock {A Convex Matrix Function}.
\newblock {\em Am. Math. Mon.}, 80(4):408--409, apr 1973.

\bibitem{Musharbash.E_etal_2015_SISC}
E. Musharbash, F. Nobile, and T. Zhou.
\newblock {Error Analysis of the Dynamically Orthogonal Approximation of Time
  Dependent Random PDEs}.
\newblock {\em SIAM J. Sci. Comput.}, 37(2):A776--A810, jan 2015.

\bibitem{Musharbash.E_Nobile_2018_Dual}
E. Musharbash and F. Nobile.
\newblock {Dual Dynamically Orthogonal approximation of incompressible Navier
  Stokes equations with random boundary conditions}.
\newblock {\em J. Comput. Phys.}, 354:135--162, feb 2018.

\bibitem{Pazy.A_1983_book}
A. Pazy.
\newblock {\em {Semigroups of Linear Operators and Applications to Partial
  Differential Equations}}, volume~44 of {\em Applied Mathematical Sciences}.
\newblock Springer New York, New York, NY, 1983.

\bibitem{Pinkus.A_1985_book_n_width}
A. Pinkus.
\newblock {\em {$n$}-widths in approximation theory}, volume~7 of {\em
  Ergebnisse der Mathematik und ihrer Grenzgebiete (3) [Results in Mathematics
  and Related Areas (3)]}.
\newblock Springer-Verlag, Berlin, 1985.

\bibitem{Sapsis.T_Lermusiaux_2009_DO}
T.~P. Sapsis and P.~F.J. Lermusiaux.
\newblock {Dynamically orthogonal field equations for continuous stochastic
  dynamical systems}.
\newblock {\em Phys. D Nonlinear Phenom.}, 238(23-24):2347--2360, dec 2009.

\bibitem{Sell.G_You_2013_book}
G.~R. Sell and Y. You.
\newblock {\em {Dynamics of Evolutionary Equations}}, volume 143.
\newblock Springer, 2013.

\bibitem{Wedin.P_1983_angles}
P.-{\AA}. Wedin.
\newblock {On angles between subspaces of a finite dimensional inner product
  space}.
\newblock In B.~K{\aa}gstr{\"{o}}m and A.~Ruhe, editors, {\em Matrix Pencils.
  Lect. Notes Math. vol 973.}, pages 263--285. Springer, 1983.

\end{thebibliography}
\end{document}